\documentclass[a4paper,12pt]{article}
\usepackage{amsmath,amsthm,amssymb}
\usepackage[dvipsnames,svgnames,table]{xcolor}
\usepackage[plain]{fullpage}
\usepackage{enumerate} 
\usepackage{subcaption}
\usepackage{comment}

\usepackage{hyperref}

\usepackage{tikz}
\usepackage{authblk,enumitem}
\usepackage{thm-restate}

\newtheorem{theorem}{Theorem}[section]
\newtheorem{corollary}[theorem]{Corollary}
\newtheorem{proposition}[theorem]{Proposition}
\newtheorem{lemma}[theorem]{Lemma}
\newtheorem{claim}{Claim}[theorem]

\theoremstyle{definition}

\newtheorem{problem}[theorem]{Problem}

\definecolor{dark-green}{rgb}{0.2, 0.5, 0.2}

\newcommand{\ora}[1]{\overrightarrow{#1}}
\newcommand{\ola}[1]{\overleftarrow{#1}}

\renewcommand{\vec}[1]{\ora{#1}} %I put larger arrows
\newcommand{\cev}[1]{\ola{#1}} %I put larger arrows

\renewcommand{\leq}{\leqslant}

\renewcommand{\geq}{\geqslant}

\newenvironment{subproof}[1][]
	{\begin{proof}[Proof of the claim] }{\end{proof}}

\DeclareMathOperator{\Inv}{Inv}

\DeclareMathOperator{\inv}{inv}

\DeclareMathOperator{\fas}{fas}

\DeclareMathOperator{\conv}{conv}
\DeclareMathOperator{\id}{id}

\DeclareMathOperator{\ct}{\ct}

\title{On the \texorpdfstring{$(\leq p)$}{(<=p)}-inversion diameter of oriented graphs \thanks{This work was partially supported by the french Agence Nationale de la Recherche under contract Digraphs ANR-19-CE48-0013-01. Caroline Silva was supported by FAPESP Proc. 2023/16755-2.}}

\author[1]{Fr\'ed\'eric Havet}
\author[1]{Cl\'ement Rambaud}
\author[1,2]{Caroline Silva}

\affil[1]{Universit\'e C\^ote d'Azur, CNRS, Inria, I3S, Sophia Antipolis, France}
\affil[2]{Institute of Computing, UNICAMP, Campinas, Brazil}

\date{}

\begin{document}
\maketitle

\begin{abstract}
In an oriented graph $\vec{G}$, the {\it inversion} of a subset $X$ of vertices consists in reversing the orientation of all arcs with both endvertices in $X$.
The {\it $(\leq p)$-inversion graph} of a labelled graph $G$, denoted by ${\mathcal{I}}^{\leq p}(G)$, is the graph whose vertices are the labelled orientations of $G$ in which two labelled orientations $\vec{G}_1$ and $\vec{G}_2$ of $G$ are adjacent if and only if
there is a set $X$ with $|X|\leq p$ whose inversion transforms $\vec{G}_1$ into $\vec{G}_2$.
In this paper, we study the {\it $(\leq p)$-inversion diameter} of a graph, denoted by $\id^{\leq p}(G)$, which is the diameter of its $(\leq p)$-inversion graph.
We show that there exists a smallest number $\Psi_p$ with $\frac{1}{4}p - \frac{3}{2} \leq \Psi_p \leq \frac{1}{2}p^2$ such that $\id^{\leq p}(G) \leq \left\lceil\frac{|E(G)|}{\lfloor p/2\rfloor}\right \rceil + \Psi_p$ for all graph $G$.
We then establish better upper bounds for several families of graphs and in particular trees and planar graphs. Let us denote by $\id^{\leq p}_{\cal F}(n)$ (resp. $\id^{\leq p}_{\cal P}(n)$) the maximum $(\leq p)$-inversion diameter of a tree (resp. planar graph) of order $n$. For trees, we show 
$\id^{\leq 3}_{\cal F}(n) = \left\lceil \frac{n-1}{2}\right\rceil$, $\id^{\leq 4}_{\cal F}(n)=\frac{3}{8}n + \Theta(1)$, $\id^{\leq 5}_{\cal F}(n)= \frac{2}{7}n + \Theta(1)$, and $\id^{\leq p}_{\cal F}(n) \leq \frac{n-1}{p- c\sqrt{p}} + 2$ with $c = \sqrt{2 + \sqrt{2}}$ for all $p\geq 6$.
For planar graphs, we prove  $\id^{\leq 3}_{\cal P}(n) \leq \frac{11n}{6} - \frac{8}{3}$, $\id^{\leq 4}_{\cal P}(n) \leq \frac{4n}{3} + \frac{10}{3}$,
and $\id^{\leq p}_{\cal P}(n) \leq \left\lceil\frac{3n-6}{\lfloor p/2\rfloor}\right \rceil + 8\lfloor p/2\rfloor - 8$ for all $p\geq 6$.

\medskip
    \noindent{}{\bf Keywords:}  inversion graph; diameter; orientation; reconfiguration.
\end{abstract}

\section{Introduction}
%%%%%%%%%%%%%%%%%%%%%%

Making a digraph acyclic by either removing a minimum cardinality set of arcs is an important and heavily studied problem, known as the  {\sc Feedback Arc Set} problem.   
A {\bf feedback arc set}  in a digraph is a set of arcs whose deletion results in an acyclic digraph. The {\bf feedback-arc-set number} of a digraph $D$, denoted by $\fas(D)$, is the minimum size of a feedback arc set. 
Note that if $F$ is a minimum feedback arc-set in a digraph $D=(V,A)$, then we will obtain an acyclic digraph from $D$ by either removing the arcs of $F$ or reversing each of these, that is replacing each arc $uv\in F$ by the arc $vu$.
Computing $\fas(D)$ is one of the first problems shown to be NP-hard listed by Karp in~\cite{karp1972}. It also remains NP-complete in tournaments as shown by Alon~\cite{alonSJDM20} and Charbit, Thomass\'e, and Yeo~\cite{charbitCPC16}.

Belkhechine et al. in~\cite{BBBP10} introduced the more general concept of inversion.
In an oriented graph $\vec{G}$, if $X$ is a set of vertices of $\vec{G}$, the {\bf inversion} of $X$ consists in reversing the orientation of all arcs with both endvertices in $X$.
In particular, the reversal of a single arc corresponds to the inversion of the set of its endvertices.
Belkhechine et al. in~\cite{BBBP10} studied the {\bf inversion number}, denoted by $\inv(D)$, which is the minimum number of inversions that transform $\vec{G}$ into a directed acyclic graph.
In particular, they proved that for every fixed $k$, determining whether a given tournament $T$ has inversion number at most $k$ is polynomial-time solvable.
In contrast, Bang-Jensen et al.~\cite{BCH} proved that deciding whether a given oriented graph has inversion number~$1$ is NP-complete.
The maximum $\inv(n)$ of the inversion numbers of all oriented graphs of order $n$ has also been investigated. Independently, Aubian et al.~\cite{inversion} and Alon et al.~\cite{APSSW} proved
$n - 2\sqrt{n\log n} \leq \inv(n) \leq n - \lceil \log (n+1) \rceil$.

Making the connection between feedback arc set (inversion over sets of size 2) and inversion over sets of unbounded size, Yuster~\cite{yuster2023tournamentinversion} studied {\bf $(\leq p)$-inversions}, which are inversions over sets of size at most $p$. (Bang-Jensen et al.~\cite{bang2025making} also studied $(= p)$-inversions, which are inversions over sets of size exactly $p$.)
Given an oriented graph $\vec{G}$, its {\bf  $(\leq p)$-inversion number}, denoted by $\inv^{\leq p}(\vec{G})$, is the minimum number of $(\leq p)$-inversions rendering $\vec{G}$ acyclic. Note that $\inv^{\leq 2}(\vec{G}) =\fas(\vec{G})$. 
Yuster studied the maximum $\inv^{\leq p}(n)$ of the $(\leq p)$-inversion numbers of all oriented graphs of order $n$. 
Results of Spencer~\cite{spencer1971,spencer1980} (later improved by de la Vega~\cite{delavega1983}) on feedback arc sets show $\inv^{\leq 2}(n) = \frac{1}{2}\binom{n}{2} +\Theta(n^{3/2})$.
Yuster~\cite{yuster2023tournamentinversion} proved $
\frac{1}{12}n^2 + o(n^2) \leq \inv^{\leq 3}(n) \leq \frac{257}{2592}n^2 + o(n^2)$ and conjectured 
$\inv^{\leq 3}(n) = \frac{1}{12}n^2 + o(n^2)$.
He also proved that, for every fixed $p$,
$$
\left(\frac{1}{2p(p-1)}+\delta_p\right)n^2 + o(n^2) \leq \inv^{\leq p}(n)\leq \left( \frac{1}{2\lfloor p^2/2\rfloor} -\epsilon_p\right)n^2 + o(n^2) $$
where $\epsilon_p >0$ for all $p\geq 3$ and $\delta_p >0$ for all $p\geq p_0$ for some $p_0$.

\medskip

Rather than reducing to an acyclic digraph, we can use inversions to reduce to other types of digraphs.
Duron et al.~\cite{duron2023minimum} studied the minimum number of inversions to make a digraph
$k$-arc-strong or $k$-strong.
More generally, Havet et al. \cite{HHR24} studied the maximum over all pairs of orientations of a graph of the minimum number of inversions required to transform one of the orientations into the other.
Formally, let $G$ be a graph with vertices labelled $v_1, \dots , v_n$.
The {\bf (labelled) inversion graph} of $G$, denoted by ${\mathcal{I}}(G)$, is a graph with vertex set the labelled orientations of $G$. Then, two labelled orientations $\vec{G}_1$ and $\vec{G}_2$ of $G$ are adjacent if and only if there is an inversion $X$ transforming $\vec{G}_1$ into $\vec{G}_2$.
The {\bf (labelled) inversion diameter} of $G$ is the diameter of ${\mathcal{I}}(G)$, denoted by 
$\id(G)$. 
 Havet et al.~\cite{HHR24} determined the maximum inversion diameter over all graphs on $n$ vertices. 
 
\begin{theorem}\label{thm:diam-upper}
    Let $G$ be a graph on $n$ vertices.
    Then $\id(G) \leq n-1$.
\end{theorem}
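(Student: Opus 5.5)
We argue by induction on $n$, working with the set of arcs on which the two orientations differ. Fix a graph $G$ and two labelled orientations $\vec{G}_1,\vec{G}_2$ of $G$. Let $F\subseteq E(G)$ be the set of edges oriented differently in $\vec{G}_1$ and $\vec{G}_2$. Inverting a set $X$ reverses exactly the arcs with both ends in $X$. Hence a sequence of inversions $X_1,\dots,X_k$ transforms $\vec{G}_1$ into $\vec{G}_2$ if and only if every edge of $F$ lies inside an odd number of the $X_i$, and every edge of $E(G)\setminus F$ lies inside an even number of them. It therefore suffices to show that for every graph $G$ on $n$ vertices and every $F\subseteq E(G)$, there are at most $n-1$ sets realising this parity pattern.

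The base case $n=1$ is trivial: $G$ has no edges, so zero inversions suffice. For $n\geq 2$, pick any vertex $v$ and let $N=\{u : uv\in F\}$. First invert $X_1=\{v\}\cup N$. Afterwards, the edges at $v$ are exactly right: an edge $uv$ is reversed if and only if $u\in N$, that is, if and only if $uv\in F$. As a side effect, every edge of $G[N]$ has also been reversed. So the new set of edges still to be reversed is
\[
F' = \bigl(F\cap E(G-v)\bigr)\,\triangle\, E(G[N]),
\]
which is contained in $E(G-v)$.

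Now apply the induction hypothesis to $G-v$, which has $n-1$ vertices, with target set $F'$. This gives at most $n-2$ subsets of $V(G)\setminus\{v\}$ whose inversions realise $F'$ on $G-v$. Since none of these sets contains $v$, they do not touch any edge incident to $v$, so those edges keep the correct orientation obtained after $X_1$. In total we use at most $1+(n-2)=n-1$ inversions, proving $\id(G)\leq n-1$.

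The only delicate point is the order of the operations. The inversion at $v$ must be done first and its side effect on $G[N]$ absorbed into the new target $F'$. The recursive inversions must then avoid $v$, so that they cannot undo the fix at $v$.
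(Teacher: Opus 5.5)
Your proof is correct. It is essentially the argument the paper relies on: the paper only cites this theorem from \cite{HHR24}, but its Proposition~\ref{prop:H+I} together with Corollary~\ref{cor:G-v}(i) (with $p=n$, so set size is unrestricted) gives exactly your step $\id(G)\leq \id(G-v)+1$. That step first inverts $v$ together with its disagreeing neighbours, then fixes the induced subgraph $G-v$ using sets that avoid $v$; equivalently, it is Corollary~\ref{cor:procedure-greedy} applied to $G$ viewed as an $(n-1)$-degenerate graph.
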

This bound is tight for complete graphs. 
For sparser graphs, Havet et al.~\cite{HHR24} showed that much better bounds can be obtained. Some of their bounds were subsequently improved by Arana et al.~\cite{arana2026}.
\begin{enumerate}[label=\alph*)]
    \item $\id(G) \leq 12$ for every planar graph $G$ \cite{HHR24},
        and there are planar graphs with  inversion diameter at least $6$ \cite{arana2026};
    \item $\id(G) \leq 3$ for every planar graph $G$ of girth at least $7$ \cite{arana2026},
        and there are planar graphs of arbitrary large girth with inversion diameter $3$  \cite{HHR24};
     \item $\id(G) \leq 2\Delta(G)-1$ for every graph $G$ with at least one edge
        and $\id(G)=\Delta(G)$ for every complete graph $G$  \cite{HHR24};
    \item $\id(G) \leq 2t$ for every graph $G$ of treewidth at most $t$~\cite{HHR24} and for every positive integer $t$, there are graphs of treewidth $t$ with  inversion diameter $2t$ ~\cite{WWYL25}.
\end{enumerate}

The {\bf (labelled) $(\leq p)$-inversion graph} of $G$, denoted by ${\mathcal{I}}^{\leq p}(G)$, is the graph whose vertices are the labelled orientations of $G$ in which two labelled orientations $\vec{G}_1$ and $\vec{G}_2$ of $G$ are adjacent if and only if there is an $(\leq p)$-inversion transforming $\vec{G}_1$ into $\vec{G}_2$.
The {\bf (labelled) $(\leq p)$-inversion diameter} of $G$ is the diameter of ${\mathcal{I}}^{\leq p}(G)$, denoted by $\id^{\leq p}(G)$.
The {\bf $(\leq p)$-converse number} of a graph $G$, denoted by $\conv^{\leq p}(G)$, which is the minimum number of $(\leq p)$-inversions to transform an orientation of $G$ into its converse. Note that this number does not depend on the orientation. 
An $(\leq p)$-inversion reverses at most $\binom{p}{2}$ arcs, and one can reverse precisely one arc by inverting the set of its endvertices, thus 
\begin{equation}\label{eq:easy}
    \frac{|E(G)|}{\binom{p}{2}}\leq \conv^{\leq p}(G) \leq \id^{\leq p}(G)\leq |E(G)|.
\end{equation}

For $p=2$, the above equation yields the equality $\conv^{\leq p}(G) =  \id^{\leq p}(G) = |E(G)|$.
The left-hand side inequality of Equation~\eqref{eq:easy} is attained for very sparse graphs, for example the disjoint union of complete graphs of order $p$.
In contrast, the right-hand side inequality  is not tight when $p > 2$. 
In Section~\ref{sec:upper-gen}, using strong edge colourings, we show the following upper bound.  

\begin{restatable}{theorem}{uppergen}
\label{thm:uppergen}
    Let $G$ be a graph and $p\geq2$ be an integer. Then $\id^{\leq p}(G) \leq \left\lceil\frac{|E(G)|}{\lfloor p/2\rfloor}\right \rceil + \frac{1}{2} p^2$.
\end{restatable}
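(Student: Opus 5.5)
The plan is to reduce everything to a single combinatorial statement about the set of edges that must change. Fix two labelled orientations $\vec{G}_1,\vec{G}_2$ of $G$, and let $F\subseteq E(G)$ be the set of edges oriented differently in them. Inverting $X$ replaces $F$ by $F\,\triangle\,E(G[X])$. So it suffices to find a sequence of sets $X_1,\dots,X_t$, each of size at most $p$, with $E(G[X_1])\triangle\cdots\triangle E(G[X_t])=F$ and $t\leq \lceil |E(G)|/q\rceil+\frac12p^2$, where $q=\lfloor p/2\rfloor$. The basic move is this: if $M\subseteq F$ is a matching with $q$ edges that is \emph{induced in $G$}, meaning $G[V(M)]$ has no edge outside $M$, then $X=V(M)$ has $|X|=2q\leq p$ and its inversion reverses exactly the $q$ edges of $M$. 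This is where strong edge colourings come in, since a colour class of a strong edge colouring is an induced matching.

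The key steps, in order, are as follows.

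\begin{enumerate}[label=(\roman*)]
\item Take a strong edge colouring of the graph $(V(G),F)$ in which each colour class is required to be induced in all of $G$, not only in $F$. Invert the classes in chunks of $q$ edges; each chunk costs one inversion.
\item Count the cost. Each class $C$ costs $\lceil |C|/q\rceil\leq |C|/q+1$ inversions, so the total is at most $|F|/q$ plus the number of classes, counting only classes whose size is not a multiple of $q$.
\item Control the leftovers. Re-pack greedily so that only a bounded number of classes end with a remainder. Concretely, repeatedly extract an induced matching of size exactly $q$ from the remaining edges of $F$ for as long as one exists.
\item Finish the residual. Once no induced matching of size $q$ remains among the edges still to reverse, show that the residual can be handled with few inversions. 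Then amortise, using $|F|\leq|E(G)|$, to reach $\lceil |E(G)|/q\rceil+\frac12p^2$.
\end{enumerate}

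For step (iv), the intended mechanism is an Erd\H{o}s--Szekeres/Ramsey-type argument on the residual edge set $R$. If $R$ has no matching of size $q$ that is induced in $G$, then $R$ together with the edges of $G$ among its endpoints must be highly clustered: a maximal induced matching of $R$ has fewer than $q$ edges, and every other edge of $R$ touches or is adjacent to its fewer than $2q$ endpoints. Within such a cluster of at most $p$ vertices one can then reverse an \emph{arbitrary} subset of edges cheaply. First invert the whole cluster, then invert sub-pairs to undo the unwanted reversals, with the number of extra inversions quadratic in $p$. I would aim for the bound $\binom{q}{2}\cdot 2+q\approx\frac12p^2$ on the total overhead, so that the additive term in the statement arises exactly from this finishing phase.

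The main obstacle is the gap between "matching in $F$" and "induced matching in $G$". In a dense graph, for example $G=K_n$ with $F$ a perfect matching, no two edges of $F$ form an induced matching in $G$. The naive greedy procedure then stalls immediately, yet the theorem still claims roughly $|F|/q$ inversions. So the real work is a combined move: invert the set $V(M)$ for an arbitrary (non-induced) matching $M\subseteq F$ of size $q$, and absorb the collateral reversed edges of $G[V(M)]\setminus M$ into the target set. That is, replace $F$ by $F\triangle E(G[V(M)])$, and argue via a potential function that the collateral errors created across successive steps can be cancelled in batches. I would first try to choose the successive matchings so that consecutive sets share their collateral edges, for instance by processing edges in a fixed vertex order so that each collateral edge is flipped exactly twice. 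I would then verify that the uncancelled collateral edges at the end number $O(p^2)$, handling them with single-edge inversions. If this cancellation cannot be arranged with constant overhead, this is the step where the argument would need a genuinely different idea.
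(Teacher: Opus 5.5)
Your proposal has a genuine gap. It comes from budgeting against the disagreement set $F$ rather than against $E(G)$.

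\textbf{Step (iv) fails as stated.} If no matching of size $q$ in $F$ is induced in $G$, the fewer than $2q$ endpoints of a maximal induced matching only \emph{dominate} the residual $R$. They do not confine it to a cluster of at most $p$ vertices. Take $G=K_{1,n-1}$, $F=E(G)$ and $q\geq 2$. Then $G$ has no matching of size $2$ at all, so steps (i)--(iii) do nothing and $R$ is the whole star.

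\textbf{Your intermediate target is false.} The target of your last paragraph, roughly $|F|/q+O(p^2)$, is not what the theorem claims. Take $G=K_n$ with $n=2k$ and $F$ a perfect matching, and let $x_i\in\mathbb{F}_2^{n}$ be the characteristic vector of $X_i$. Then $\sum_{i=1}^t x_ix_i^{T}$ is the adjacency matrix of $F$ plus a diagonal matrix. It is block-diagonal with $k$ nonzero $2\times 2$ blocks, so its rank forces $t\geq k=|F|$. Hence no collateral-cancellation scheme can give $|F|/q+O(p^2)$. In this example the actual bound $\lceil \binom{n}{2}/q\rceil+\frac12p^2$ is already met by single-edge inversions once $n>q$.

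\textbf{The missing idea.} Let each inversion be paid for by $q$ edges of $G$, whether they agree or not. Then induct on the graph using the diameter, which is a worst case over all pairs of orientations. Proposition~\ref{prop:H+I} gives $\id^{\leq p}(G)\leq\id^{\leq p}(H)+\id^{\leq p}(I)$ whenever $G=H\cup I$ with $I$ induced. One first fixes $H$, then fixes $I$ from whatever orientation it has reached, using sets inside $V(I)$ that cannot touch $E(H)$. Collateral damage inside $I$ is therefore free.

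This yields two peeling moves in a minimal counterexample (Lemma~\ref{lem:min-ce}):
\begin{itemize}
\item A vertex $v$ with $d(v)\geq q$ is removed at cost $\lfloor d(v)/q\rfloor$. Split $N(v)$ into groups of $q$ to $2q-1$ vertices, invert $v$ together with its disagreeing neighbours in each group, and take $I=G-v$ (Corollary~\ref{cor:G-v}(ii)).
\item An induced matching of $G$ with $q$ edges is removed at cost $1$.
\end{itemize}
Both moves stay within the budget $\lceil |E(G)|/q\rceil$.

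When neither move applies, $\Delta(G)\leq q-1$ and every class of a strong edge-colouring has fewer than $q$ edges. Hence $\id^{\leq p}(G)\leq\chi_s(G)\leq 2\Delta(G)^2\leq 2(q-1)^2\leq\frac12p^2$. Strong edge-colourings enter only in this bounded-degree base case, which is exactly where the additive term comes from.
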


This upper bound is tight up to the additive term $\frac{1}{2} p^2$.
Indeed, consider a matching graph $M$, that is, the disjoint union of $K_2$ and $K_1$. Then, an $(\leq p)$-inversion reverses at most $\lfloor p/2\rfloor$ edges, so $\conv^{\leq p}(M)=  \id^{\leq p}(M) = \left\lceil \frac{|E(M)|}{\lfloor p/2\rfloor}\right \rceil$.
In particular, for $p\in \{2,3\}$, we get $\conv^{\leq p}(M) =  \id^{\leq p}(M) = |E(M)|$, so the right-hand side
inequalities of Equation~\ref{eq:easy} are tight.

% Hence, a natural question 
% is to determine the smallest number $\Psi_p$, (resp. $\Psi'_p$) such that 
% $\id^{\leq p}(G)\leq \left\lceil \frac{|E(G)|}{\lfloor p/2\rfloor}\right \rceil + \Psi_p$
% (resp. $\conv^{\leq p}(G) \leq  \left\lceil \frac{|E(G)|}{\lfloor p/2\rfloor}\right \rceil + \Psi'_p$) 
% for every graph $G$.

% Observe that if $n\leq p$, then $\id^{\leq p}(K_n) = \id(K_n) = n-1$ as proved in \cite{HHR24},
% so $\Psi_p \geq n-1 - \left\lceil \frac{\binom{n}{2}}{\lfloor p/2\rfloor}\right \rceil \geq n-1 - \left\lceil \frac{n(n-1)}{p-1} \right\rceil$.
% For $n=\lceil p/2 \rceil$, we obtain $\Psi_p \geq \frac{1}{4}p -\frac{3}{2}$.

% We believe that this lower bound is tighter than the upper bound.

% \begin{problem}\label{prob:m/2}
%  Does there exist a constant  $C$ such that $\Psi_p \leq C\cdot p$ for every integer $p$ greater than $1$?
% \end{problem}

The upper bound of Theorem~\ref{thm:uppergen} is tight for matching graphs, which are very sparse. 
It is however far from being tight for not too sparse graphs. 
In Theorem~\ref{thm:procedure1}, we prove the following upper bound.

\begin{equation}\label{eq:m/p-1}
\id^{\leq p}(G) \leq \frac{1}{p-1}|E(G)| + \frac{p-2}{p-1}(|V(G)| -1).
\end{equation}

Note that this bound is better than the one of Equation~\eqref{eq:easy} if $|E(G)|\geq |V(G)| -1$ and better than the one of Theorem~\ref{thm:uppergen}
if $|E(G)|\geq (p-2)(|V(G)| -1)$ when $p$ is odd and if 
 $|E(G)|\geq p(|V(G)| -1)$ when $p$ is even.

We can get an even better upper bound for very dense graphs (graphs of order $n$ with $\Omega (n^2)$ edges). Using a classical theorem by K{\H{o}}v{\'a}ri, S{\'o}s, and Tur{\'a}n~\cite{kovariCM3}, Yuster~\cite{yuster2023tournamentinversion} implicitely used the following theorem, which intuitively, means that after $\frac{|E|}{\lceil p/2\rceil \cdot \lfloor p/2\rfloor}$  inversions of well-chosen sets of size $p$,
we have reversed exactly the edges in $E$, up to $o(n^2)$ errors
(See Bang-Jensen et al.~\cite{bang2025making} for an explicit proof).

\begin{theorem}\label{thm:reversing_a_dense_set_of_edges}
    Let $p$ be an integer with $p \geq 2$.
    There exist constants $\alpha_p$ and $\epsilon_p$ with $\epsilon_p>0$ such that the following holds.
    Let $n$ be a positive integer,  and let $F \subseteq \binom{[n]}{2}$.
    There exists an integer $\ell$ with $\ell \leq \frac{|F|}{\lceil p/2\rceil \cdot \lfloor p/2\rfloor}$,
    and a family $X_1, \dots, X_\ell$ of $(=p)$-subsets of $[n]$ such that
    \[
        \left|F \Delta \binom{X_1}{2} \Delta \dots \Delta \binom{X_\ell}{2}\right| \leq \alpha_p \cdot n^{2-\epsilon_p}.
    \]
\end{theorem}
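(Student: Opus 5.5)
The natural approach is the Kővári–Sós–Turán argument: build the family greedily, and at each step extract a complete bipartite subgraph $K_{\lceil p/2\rceil,\lfloor p/2\rfloor}$ from what remains of $F$, so that each inversion "pays" exactly $\lceil p/2\rceil\lfloor p/2\rfloor$ edges of $F$. Write $a=\lceil p/2\rceil$, $b=\lfloor p/2\rfloor$, so $a+b=p$. The key observation is that inverting a set $X=A\cup B$ with $|A|=a$, $|B|=b$, $A\cap B=\emptyset$, flips all $ab$ pairs of $A\times B$ together with the pairs inside $A$ and inside $B$. Inverting $A$ and $B$ separately flips exactly those inner pairs, so the bipartite flip $A\times B$ equals $\binom{X}{2}\Delta\binom{A}{2}\Delta\binom{B}{2}$. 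The extra sets $A,B$ have size $<p$, whereas the statement asks for $(=p)$-subsets. Hence I would not try to cancel the inner pairs; I would treat them as errors to be controlled. This is why the statement tolerates an $\alpha_p n^{2-\epsilon_p}$ error term.

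First I extract many disjoint bipartite cliques. Let $G_0$ be the graph $([n],F)$. While the current graph $G_i$ has more than $c\,n^{2-1/b}$ edges (with $c$ depending only on $p$), Kővári–Sós–Turán gives a copy of $K_{a,b}$ in $G_i$ with parts $A_i,B_i$. Delete its $ab$ edges to get $G_{i+1}$, and set $X_i=A_i\cup B_i$. This stops after $\ell\leq |F|/(ab)$ steps. At that point the edge sets $A_i\times B_i$ are pairwise disjoint, contained in $F$, and leave a remainder $R$ with $|R|\leq c\,n^{2-1/b}$.

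The difficulty is the inner pairs $\binom{A_i}{2}\cup\binom{B_i}{2}$, about $\binom a2+\binom b2$ per step. Summed over $\ell=\Theta(n^2)$ steps this gives $\Theta(n^2)$ error, which is too much. So the main obstacle is making these inner pairs cancel, or be negligible, in the symmetric difference. I would handle it in two stages:
\begin{enumerate}
\item First run the extraction with $F$ replaced by $F'$, where $F'$ is chosen to pre-compensate. Equivalently, iterate: after the first pass the accumulated inner-pair error $E_1$ has size at most $\beta|F|$ with $\beta=(\binom a2+\binom b2)/(ab)<1$.
\item Then apply the same procedure to $E_1$, which costs only $|E_1|/(ab)$ further sets, and so on geometrically.
\end{enumerate}
This iteration, however, breaks the exact count $\ell\leq |F|/(ab)$.

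The cleaner fix, which I would try first, is to choose the $K_{a,b}$ copies in groups. Take a large $K_{t,t}$ inside $G_i$ (again by Kővári–Sós–Turán, with $t$ a large constant) and tile $A\times B$, where $|A|=|B|=t$, with bipartite blocks $A_j\times B_k$ arranged as a Latin-square style design. The inner pairs of $A$ are then each covered an even number of times, so they cancel, with boundary waste $O(1)$ per $K_{t,t}$, i.e.\ a fraction $O(1/t)$. Choosing $t$ so that this waste is absorbed gives the counts; the exact inequality $\ell\leq |F|/(ab)$ follows because each $X$ still covers exactly $ab$ edges of the block, counted with parity. The final error consists of the remainder below the Kővári–Sós–Turán threshold plus the uncancelled inner pairs, and is bounded by $\alpha_p n^{2-\epsilon_p}$ with $\epsilon_p\approx 1/t$.
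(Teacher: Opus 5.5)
The paper does not prove this statement itself; it attributes it to Yuster, with an explicit proof in Bang-Jensen et al., via K{\H{o}}v{\'a}ri--S{\'o}s--Tur{\'a}n. Your first two stages are dead ends, as you note yourself. Your last paragraph has the right idea, but the error accounting there does not give the bound.

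\textbf{The gap.} You allow ``boundary waste $O(1)$ per $K_{t,t}$, i.e.\ a fraction $O(1/t)$'' of uncancelled inner pairs. When $t$ is a constant and $F$ is dense, the greedy procedure extracts $\Theta(n^2)$ edge-disjoint copies of $K_{t,t}$. So $O(1)$ waste per copy already accumulates to $\Theta(n^2)$ error, and a fraction $O(1/t)$ gives $\Theta(n^2/t)$. Neither is $O(n^{2-\epsilon_p})$. Letting $t$ grow with $n$ does not help either: the extraction threshold $c_t n^{2-1/t}$ then stops being polynomially smaller than $n^2$. So your final claim, that the uncancelled inner pairs are bounded by $\alpha_p n^{2-\epsilon_p}$, does not follow.

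\textbf{The fix.} With the right divisibility there is no waste at all, and no Latin-square design is needed. Write $a=\lceil p/2\rceil$ and $b=\lfloor p/2\rfloor$, and take $t=2ab$; any $t$ with $t/a$ and $t/b$ both even works. Split $A$ into $t/a$ blocks $A_j$ of size $a$, and $B$ into $t/b$ blocks $B_k$ of size $b$. Invert all $(t/a)(t/b)=t^2/(ab)$ sets $A_j\cup B_k$, each of size exactly $p$. The flip counts are:
\begin{itemize}
\item a pair of $A\times B$ is flipped exactly once;
\item a pair inside some $A_j$ is flipped $t/b$ times, which is even;
\item a pair inside some $B_k$ is flipped $t/a$ times, which is even;
\item a pair meeting two different blocks on the same side is never flipped.
\end{itemize}
So the net effect is exactly $A\times B$.

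Now greedily remove edge-disjoint copies of $K_{t,t}$ from $F$ until the remainder $R$ has at most $c_t n^{2-1/t}$ edges. Each copy uses $t^2/(ab)$ sets, so
\[
\ell=\frac{|F|-|R|}{ab}\leq \frac{|F|}{ab}.
\]
The symmetric difference with $F$ is exactly $R$. This gives the statement with $\epsilon_p=1/t$ and $\alpha_p=c_t$.
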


This theorem directly implies

\begin{equation}\label{eq:m/p2}
\id^{\leq p}(G) \leq \frac{|E(G)|}{\lceil p/2\rceil \cdot \lfloor p/2\rfloor} + o(|V(G)|^2).
\end{equation}

This bound is asymptotically tight. Indeed, consider two orientations of a complete bipartite graph $B$ which are converse to each other. Every inversion reverses at most $\lceil p/2\rceil \cdot \lfloor p/2\rfloor$ edges. Since all edges of $B$ need to be reversed, we get
$\id^{\leq p}(B) \geq \frac{|E(B)|}{\lceil p/2\rceil \cdot \lfloor p/2\rfloor}$.

\medskip

In Section~\ref{sec:upper-connected}, we give upper bounds of the $(\leq 3)$-inversion diameter of a connected graph in terms of its triangle-transversal numbers and its triangle-edge-packing number and derive the following corollary.

\begin{theorem}\label{thm:connected}
  If $G$ is a connected graph, then  $\id^{\leq 3}(G) 
  \leq \left\lceil \frac{3|E(G)|}{4} \right\rceil $.
\end{theorem}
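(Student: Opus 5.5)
The plan is to reduce the statement to a bound on the converse number by running two strategies and taking the better one. Fix two orientations $\vec{G}_1,\vec{G}_2$ of $G$, let $m=|E(G)|$, and let $F\subseteq E(G)$ be the set of edges oriented differently in them.

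\emph{Strategy~A} reverses each edge of $F$ individually, by inverting the set of its two endvertices. This costs $|F|$ inversions.

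\emph{Strategy~B} first transforms $\vec{G}_1$ into its converse using $\conv^{\leq 3}(G)$ inversions. It then reverses back each edge of $E(G)\setminus F$ individually. This costs $\conv^{\leq 3}(G)+m-|F|$ inversions.

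Hence $\id^{\leq 3}(G)\leq \min\bigl(|F|,\ \conv^{\leq 3}(G)+m-|F|\bigr)$, which is at most the average of the two quantities, namely $\frac{1}{2}\bigl(m+\conv^{\leq 3}(G)\bigr)$. So it suffices to prove $\conv^{\leq 3}(G)\leq \lceil m/2\rceil$ for every connected graph $G$. Since $\id^{\leq 3}(G)$ is an integer, we then get $\id^{\leq 3}(G)\leq \lfloor (m+\lceil m/2\rceil)/2\rfloor\leq\lceil 3m/4\rceil$; this parity bookkeeping should be checked but looks routine.

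To bound the converse number, I would use the classical fact that the edge set of a connected graph decomposes into paths of length~$2$, plus a single edge when $m$ is odd. Inverting the vertex set $\{u,v,w\}$ of such a path $uvw$ reverses exactly its two edges whenever $uw\notin E(G)$. This gives $\lceil m/2\rceil$ inversions, provided no path of the decomposition lies in a triangle.

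The main obstacle is the case where $uw\in E(G)$: inverting $\{u,v,w\}$ then also flips the chord $uw$, which the decomposition assigns to another path. I would handle this by a direct induction on $m$ rather than a fixed decomposition. In the inductive step, I choose a $2$-path or a triangle $S$ whose removal leaves a graph whose components can be treated inductively, and I invert $V(S)$.

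If $S$ is a triangle, one inversion reverses three edges, so we are ahead of the budget of one inversion per two edges. That slack should pay for the extra $\lceil\cdot\rceil$ terms that arise when the removal disconnects the graph into several odd components. Otherwise, I choose an induced $2$-path $uvw$ with $uw\notin E(G)$.

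The delicate point is to choose $S$ so that the number of resulting components with an odd number of edges is controlled, for instance by taking $v$ to be an end of a longest path or a leaf-side vertex of a DFS tree, as in the standard proof of the $P_3$-decomposition lemma. An alternative I would try first is to keep the standard decomposition and repair it locally. Whenever a path $uvw$ of the decomposition has a chord $uw$, the chord belongs to some other path $uwx$ or $xuw$ of the decomposition; we can then invert $\{u,v,w\}$ once and reverse the edge $wx$ (respectively $xu$) individually, so the two paths still cost two inversions in total. The parity of how many times each chord gets flipped must be checked carefully when chords are shared among several triangles.
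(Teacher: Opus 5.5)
Your averaging step is arithmetically correct, but the statement you reduce to is false. A connected graph need not satisfy $\conv^{\leq 3}(G)\leq\lceil |E(G)|/2\rceil$, and $K_4$ already fails it. Every $3$-subset of $K_4$ induces a triangle, which has even degree at every vertex, while all four vertices have odd degree in $E(K_4)$. So any family reversing all six edges contains at least two $2$-sets, and three sets then reverse at most $3+1+1=5$ edges; hence $\conv^{\leq 3}(K_4)=4>3$. This is exactly the chord problem you flagged, and neither of your fixes can overcome it. In the induction on $m$, the graph left after deleting $E(S)$ is in general not induced, so the inversions it calls for also flip edges of $S$. The local repair is trying to prove a false statement.

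Nor does a weaker bound on the converse number rescue the two-strategy averaging. Join three disjoint copies of $K_4$ by two edges into a connected graph, so $m=20$. A $(\leq 3)$-set has at least two vertices in at most one copy. By the $K_4$ argument, each copy needs four sets meeting it in at least two vertices, and these sets are distinct for distinct copies, so $\conv^{\leq 3}(G)\geq 12$. For a pair of orientations disagreeing on $16$ edges, Strategy~A costs $16$ and Strategy~B at least $12+4=16$, while $\lceil 3m/4\rceil=15$. Chains of $k$ copies give $\conv^{\leq 3}\geq 4k\approx\frac{4}{7}m$, so your method only certifies about $\frac{11}{14}m$.

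The missing idea is to avoid the converse altogether and push every chord into a small set of edges that you repair individually at the end. Let $F$ be a minimum triangle-transversal of $G$.
\begin{enumerate}
\item $|F|\leq\lfloor m/2\rfloor$, since the complement of a bipartite subgraph with at least $\lceil m/2\rceil$ edges is a transversal.
\item $G\setminus F$ is connected: by minimality, each $f\in F$ lies in a triangle whose other two edges avoid $F$.
\item Decompose the connected, triangle-free graph $G\setminus F$ into $\lceil (m-|F|)/2\rceil$ paths of order at most $3$ (Kotzig).
\item For each path, invert the appropriate subset of its vertex set. This corrects the path's edges and touches no other edge of $G\setminus F$, because its only possible chord lies in $F$.
\item The remaining disagreements lie in $F$ and cost one inversion each.
\end{enumerate}
The total is $\lceil (m+|F|)/2\rceil\leq\lceil 3m/4\rceil$.
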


Let $\mathcal{C}$ be a graph class. It is {\bf $(a,b)$-sparse} if
$|E(G)| \leq a|V(G)| + b$ for every $G\in \mathcal{C}$, and {\bf sparse} if it is $(a,b)$-sparse for some pair $(a,b)$.
 The {\bf $(\leq p)$-inversion diameter function}, of  $\mathcal{C}$, denoted by $\id^{\leq p}_{\mathcal{C}}$, is the smallest function $f$ such that $\id^{\leq p}(G) \leq f(|V(G)|)$ for every $G\in \mathcal{C}$.
 Similarly,  the {\bf $(\leq p)$-converse function}, of  $\mathcal{C}$, denoted by $\conv^{\leq p}_{\mathcal{C}}$, is the smallest function $g$ such that $\conv^{\leq p}(G) \leq g(|V(G)|)$ for every $G\in \mathcal{C}$.
Clearly, $\conv^{\leq p}_{\cal C}(n) \leq \id^{\leq p}_{\cal C}(n)$ for every $n$.
If $\cal C$ is $(a,b)$-sparse, then Theorem~\ref{thm:uppergen} and  Equation~\eqref{eq:m/p-1} yield the upper bound 
$$\conv^{\leq p}_{\cal C}(n) \leq \id^{\leq p}_{\cal C}(n) \leq \min \left \{\left\lceil\frac{an+b}{\lfloor p/2\rfloor}\right \rceil +\frac{1}{2}p^2 ,  \frac{a+p+2}{p-1}n + \frac{b-p+2}{p-1}\right\}.$$
In the remaining of the paper, we improve on this bound for several well-known sparse classes of  graphs.

In Section~\ref{sec:trees}, we determine the $(\leq p)$-inversion diameter function and $(\leq p)$-converse function of the class ${\cal F}$ of forests up to an additive constant when $p\leq 5$ and gives an upper bound on those parameters which improves on the above upper bounds for any value of~$p$ greater than $4$.

\begin{theorem}\label{thm:trees}
\begin{itemize}
\item[(i)] $\id^{\leq 3}_{\cal F}(n) = \conv^{\leq 3}_{\cal F}(n)= \left\lceil \frac{n-1}{2}\right\rceil$.   
\item[(ii)]  $\id^{\leq 4}_{\cal F}(n), \conv^{\leq 4}_{\cal F}(n)=\frac{3}{8}n + \Theta(1)$.
\item[(iii)]  $\id^{\leq 5}_{\cal F}(n), \conv^{\leq 5}_{\cal F}(n)= \frac{2}{7}n + \Theta(1)$.
\item[(iv)] $\conv^{\leq p}_{\cal F}(n)  \leq \id^{\leq p}_{\cal F}(n) \leq \frac{n-1}{p- c\sqrt{p}} + 2$ with $c = \sqrt{2 + \sqrt{2}}$.

\end{itemize}
\end{theorem}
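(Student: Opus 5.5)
Throughout, the upper bounds come from a decomposition-and-inversion scheme and the lower bounds from counting how many edges one inversion can reverse in a forest. For a forest $F$, the inversion of a set $X$ reverses exactly the edges of the induced forest $F[X]$. If $|X|\leq p$, this is at most $p-1$ edges, and fewer when $F[X]$ is disconnected. Every orientation pair of a forest differs on some set $S$ of edges, and we must reverse each edge of $S$ an odd number of times and every other edge an even number of times. The plan is to partition $S$ (or all of $E(F)$) into pieces, each of which is the edge set of a small induced subtree, or a symmetric difference of a few such subtrees. The lower bounds will then come from taking the converse, i.e.\ $S=E(F)$, for carefully chosen trees, so $\conv$ and $\id$ are handled simultaneously.

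\textbf{Part (i).} For $p=3$, an inversion reverses the edges of an induced subgraph on at most $3$ vertices, hence at most $2$ edges forming a path $P_3$. For the upper bound, I would prove by induction that the edge set $S\subseteq E(T)$ of a tree with $n$ vertices can be covered by at most $\lceil (n-1)/2\rceil$ sets, each a single edge or an induced $P_3$. The argument is the classical pairing of edges of a tree into paths of length $2$: repeatedly take a deepest vertex whose children are leaves and pair the edges of $S$ at it, sending at most one leftover edge upward, paired with the parent edge or used alone. Inside a tree any two adjacent edges induce a $P_3$, so each pair is one inversion. For the lower bound, I would take the converse of a path or star-like tree; for the star $K_{1,n-1}$, each inversion reverses at most $2$ edges, giving $\lceil (n-1)/2\rceil$.

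\textbf{Parts (ii)--(iv).} For $p=4$ and $p=5$, the lower bounds $\frac38 n$ and $\frac27 n$ should come from a specific tree, likely a spider or a caterpillar with subdivided legs, for which a weighting argument shows each inversion reverses few edges on average. Without loss of generality the inversions are subtrees, and then one uses parity: each edge must be covered an odd number of times. I would guess the extremal tree for $p=4$ consists of copies of $K_{1,3}$ subdivided, and would compute via LP duality the best ratio of edges reversed per inversion. The upper bounds would be greedy decompositions of trees into subtrees of size about $p$, with a bounded number of exceptional pieces, using symmetric differences of two inversions to fix parity on leftover edges. For (iv), I would cut the tree into rooted subtrees each having between roughly $p-c\sqrt p$ and $p$ vertices. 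The standard splitting lemma gives pieces of size in $[k,2k]$ at branching vertices, and the deficit of $\sqrt p$ arises from pieces sharing their root. The constant $c=\sqrt{2+\sqrt2}$ suggests optimizing a two-level splitting: first into pieces of size about $p/\sqrt2$, then merging. The additive $+2$ handles the final root piece.

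The main obstacle is matching the constants $\frac38$ and $\frac27$ exactly: both the extremal tree in the lower bound and the charging argument in the upper bound are delicate. I would first attempt to find the lower-bound trees by LP reasoning, and then design the upper-bound decomposition to mirror the resulting dual weights.
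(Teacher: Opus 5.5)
Your part (i) is correct and is essentially the paper's argument. Pairing adjacent disagreeing edges bottom-up leaves at most one unpaired edge per component $C$ of $(V,S)$. Hence you use at most $\sum_C\lceil (|V(C)|-1)/2\rceil\leq\lfloor n/2\rfloor=\lceil (n-1)/2\rceil$ inversions, and every $P_3$ is induced in a forest. The paper instead splits all of $E(T)$ into $\lceil (n-1)/2\rceil$ paths of order at most $3$ by Kotzig's lemma and fixes each path with one subset of its vertices.

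Parts (ii)--(iv), however, are not proved, and the framework you propose for them cannot work. The claim ``without loss of generality the inversions are subtrees'' is false. Take the spider with $s$ legs $xy_iz_i$ (the paper's extremal tree for $p=4$, with $n=2s+1$). A connected set of at most $4$ vertices contains at most one pendant edge $y_iz_i$, so any family of subtree inversions, overlapping or not, has at least $s=(n-1)/2$ members. Yet $\conv^{\leq 4}$ of this tree is at most $\lceil 3s/4\rceil$ (Theorem~\ref{thm:tree4}). Concretely, invert $\{z_i,y_i,x,y_j\}$ for disjoint pairs of legs, then the \emph{disconnected} sets $\{y_j,z_j,y_l,z_l\}$ to reverse the leftover pendant edges two at a time.

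The constant $\frac38$ exists only because of such disconnected sets. The paper's lower bound weights $xy_i$ by $1$ and $y_iz_i$ by $2$, and checks that every $(\leq 4)$-set induces weight at most $4$; the bound is tight precisely on sets of these two kinds. Its upper bound (Lemma~\ref{lem:decomp-tree4}) is a longest-path case analysis that decomposes any tree into $\lceil 3(n-1)/8\rceil$ edge-disjoint \emph{induced} subgraphs of order at most $4$, some of them $2$-edge matchings. So a greedy decomposition into subtrees of size about $p$ cannot give (ii).

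Your guessed extremal tree is also off. In copies of the subdivided $K_{1,3}$ each centre serves only three legs, and $k$ disjoint copies ($n=7k$) are reversed by $2k+\lceil k/2\rceil\approx\frac{5}{14}n<\frac38 n$ inversions. For $p=5$ you give neither the tree nor the decomposition. The paper's tree is a root joined to $q$ complete binary trees of depth $2$, with a weighting of traces giving $2q$ (Proposition~\ref{prop:tree5-tight}). Its decomposition uses good $5$-sets and good $5$-pairs (Lemma~\ref{lem:good5}): one inversion removing $4$ edges, or two inversions removing $7$ edges, the latter being exactly the ratio $\frac27$.

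You also lack a way to pass from converses to arbitrary pairs of orientations. Your part (i) works only because a connected piece of $(V,S)$ induces in a forest nothing but its own edges. A disconnected $X$ can also induce \emph{agreeing} edges joining its parts. Treating each component of $(V,E_{\neq})$ separately costs an additive constant per component, which is unbounded.

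The paper resolves this with Lemma~\ref{lem:t'andt}. Take the components of $(V,E_{\neq})$ and make two of them adjacent when an edge of $F$ joins them; this gives a forest minor $H_{\neq}$ of $F$. A proper $2$-colouring of $H_{\neq}$ yields $V=A_1\cup A_2$ such that $F\langle A_j\rangle$ consists exactly of the disagreeing edges inside $A_j$. The converse bound $\alpha n_j+\beta$ then applies to each side, giving $\id\leq\alpha n+2\beta$. This doubling of the $+1$ from the ceiling in $\lceil (n-1)/(p-c\sqrt p)\rceil$ is the origin of the $+2$ in (iv), not a ``final root piece''.

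Finally, (iv) needs Lemma~\ref{lem:set-tree-sqrtp}, not a two-level splitting. In a tree rooted at $r$ with at least $p$ vertices, there is a $(\leq p)$-set $X$ with at least $p-c\sqrt p$ induced edges whose removal leaves a single nontrivial component, containing $r$. In the main case, $X$ consists of the vertex sets of all but the smallest root subtree, plus a set obtained by recursing into the smallest subtree with budget $p-\alpha\leq p/2$. Those subtrees have total size $\alpha\geq p/2$ and contribute $\alpha-(k-1)$ edges, with $k-1<\sqrt p/c$. The constant $c$ solves $c/\sqrt2+1/c=c$, i.e.\ $c^2=2+\sqrt2$. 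Peeling off such sets one at a time gives the converse bound.
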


In Section~\ref{sec:k-deg}, we show that if 
$G$ is a $k$-degenerate graph, then 
$\id^{\leq p}(G) \leq |V(G)| -1$, for any $p\geq k+1$.
We show that it is tight when $k=2$ and $p=3$.

\medskip

In Section~\ref{sec:planar}, we consider the class ${\cal P}$ of planar graphs and show $\frac{7n}{6}-2 \leq \conv^{\leq 3}_{\cal P}(n) \leq \id^{\leq 3}_{\cal P}(n) \leq \frac{11n}{6}-\frac{8}{3}$, $\id^{\leq 4}_{\cal P}(n) \leq \frac{4n}{3}+\frac{10}{3}$, and $\frac{p-1}{(p-2)^2+1} \cdot (n-2) \leq  \conv^{\leq p}_{\cal P}(n) \leq \id^{\leq p}_{\cal P}(n) \leq \left\lceil\frac{3n-6}{\lfloor p/2\rfloor}\right \rceil + 8\lfloor p/2\rfloor - 8$ for every $p\geq 4$.
%It would be interesting to close the gaps between those upper and lower bounds.

\section{Notations, definitions, and preliminaires}
%%%%%%%%%%%%%%%%%%%%%%%%%%%%%%%%%%%%%%%%%%%%%%%%%%
Notation not given below is consistent with \cite{bang2009}. A {\bf $(\leq p)$-set} is a set of cardinality at most $p$. 
Let $D$ be an oriented graph and $\cal X$ be a family of subsets of $V(D)$.
We say that $\cal X$ is an {\bf $(\leq p)$-family} if all members of $\cal X$ are $(\leq p)$-sets.
We denote by $\Inv(D; {\cal X})$ the oriented graph obtained after inverting all sets of $\cal X$ one after another. Observe that this is independent of the order in which we invert those sets: $\Inv(D; {\cal X})$ is obtained from $D$ by reversing exactly those arcs for which an odd number of members of ${\cal X}$ contain both endvertices.
If ${\cal X} = \{X\}$ for a set $X\subseteq V(D)$, then we write $\Inv(D; X)$ for $\Inv(D; {\cal X})$.

Let $\vec{G}_1$ and $\vec{G}_2$ be two orientations of a graph $G$.
If an edge $e$ has the same orientation in $\vec{G}_1$ and $\vec{G}_2$, we say that $\vec{G}_1$ and $\vec{G}_2$ {\bf agree} on $e$;
otherwise we say that they {\bf disagree} on $e$. 
We denote by $E_{=}$ the set of edges of $G$ on which $\vec{G}_1$ and $\vec{G}_2$ agree 
and by $E_{\neq}$ the set of edges of $G$ on which $\vec{G}_1$ and $\vec{G}_2$ disagree.
We set $G_{\neq} = (V(G), E_{\neq})$.
A {\bf $k$-vertex} in a graph $G$ is a vertex of degree $k$.

\begin{proposition}\label{prop:H+I}
    Let $G$ be a graph.
If $G$ is the union of a subgraph $H$ and an induced subgraph $I$, then
$\id^{\leq p}(G) \leq  \id^{\leq p}(H) + \id^{\leq p}(I)$.
\end{proposition}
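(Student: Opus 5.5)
Fix two orientations $\vec{G}_1$ and $\vec{G}_2$ of $G$. The plan is to fix the edges of $H$ first and then correct the edges of $I$ using inversions confined to $V(I)$; because $I$ is induced, those inversions cannot disturb any edge outside $I$.

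\textbf{Step 1: fix the edges of $H$.} Let $\vec{H}_1$ and $\vec{H}_2$ be the restrictions of $\vec{G}_1$ and $\vec{G}_2$ to $H$. By definition of $\id^{\leq p}(H)$, there is an $(\leq p)$-family ${\cal X}$ of subsets of $V(H)$ with $|{\cal X}| \leq \id^{\leq p}(H)$ such that $\Inv(\vec{H}_1;{\cal X}) = \vec{H}_2$. Apply ${\cal X}$ to $\vec{G}_1$ and set $D = \Inv(\vec{G}_1;{\cal X})$. An edge of $H$ is reversed in $D$ exactly when it is reversed in $\Inv(\vec{H}_1;{\cal X})$, since this depends only on the parity of the number of members of ${\cal X}$ containing both endvertices. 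Hence $D$ agrees with $\vec{G}_2$ on every edge of $H$. Edges of $G$ not in $H$ may also have been flipped, but they all lie in $E(I)$, because $G = H \cup I$.

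\textbf{Step 2: fix the edges of $I$.} Let $D_I$ be the restriction of $D$ to $V(I)$ and $\vec{I}_2$ the restriction of $\vec{G}_2$ to $V(I)$. Since $I$ is induced, $D_I$ and $\vec{I}_2$ are orientations of $I$. So there is an $(\leq p)$-family ${\cal Y}$ of subsets of $V(I)$ with $|{\cal Y}| \leq \id^{\leq p}(I)$ such that $\Inv(D_I;{\cal Y}) = \vec{I}_2$.

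Every member of ${\cal Y}$ lies inside $V(I)$, so inverting it only touches edges of $G$ with both ends in $V(I)$. Because $I$ is induced, these are exactly the edges of $I$. Therefore $\Inv(D;{\cal Y})$ agrees with $\vec{G}_2$ on $E(I)$, and it is unchanged from $D$ on $E(G)\setminus E(I) \subseteq E(H)$, where $D$ already agrees with $\vec{G}_2$.

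\textbf{Conclusion.} Together, ${\cal X}$ followed by ${\cal Y}$ transforms $\vec{G}_1$ into $\vec{G}_2$ using at most $\id^{\leq p}(H)+\id^{\leq p}(I)$ inversions, each of size at most $p$. Since $\vec{G}_1$ and $\vec{G}_2$ were arbitrary, this gives the bound.

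The only delicate point is the order of the two steps. Inversions performed for $H$ may disturb edges of $I$, but not conversely, because $I$ is induced. So $H$ must be treated first; with that order, no step presents a real obstacle.
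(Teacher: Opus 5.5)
Your proof is correct and follows the paper's argument: first fix the edges of $H$ with a family realizing $\id^{\leq p}(H)$, then fix $I$ with inversions inside $V(I)$, which touch only edges of $I$ because $I$ is induced. The only difference is cosmetic: the paper first removes edges from $H$ (by monotonicity) so that $E(H)\cap E(I)=\emptyset$, whereas you handle shared edges directly by noting that the second step corrects them anyway.
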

\begin{proof}
Since $\id^{\leq p}$ is monotone, free to remove some edges of $H$, we may assume that
$E(H)\cap E(I) =\emptyset$.
%Set $t_1= \id^{\leq p}(H)$ and $t_2=\id^{\leq p}(I)$.

Let $\vec{G}_1$ and $\vec{G}_2$ be two orientations of $G$.
For $i \in \{1,2\}$, let $\vec{H}_i$ be the orientation of $H$ which agrees with  $\vec{G}_i$ on $V(H)$.
There exists a $(\leq p)$-family ${\cal X}_H$ of size at most $\id^{\leq p}(H)$ such that $\Inv(\vec{H}_1; {\cal X}_H) = \vec{H}_2$.
Set $\vec{G}_3 = \Inv(\vec{G}_1; {\cal X}_H)$. 
Clearly, $\vec{G}_3$ agrees with $\vec{G}_2$ on $E(H)$.
For $i\in\{2,3\}$, let $\vec{I}_i$ be the orientation of $I$ induced by  $\vec{G}_i$.
There exists a $(\leq p)$-family ${\cal X}_I$ of size at most $\id^{\leq p}(I)$ such that $\Inv(\vec{I}_3; {\cal X}_I) = \vec{I}_2$.
Each element of ${\cal X}_I$ is a subset of $V(I)$ and, thus, does not contain any pair of endvertives of edges of $H$, since $I$ is an induced subgraph and $E(H)\cap E(I) =\emptyset$.
Therefore, no edges of $H$ is reversed by the inversion of ${\cal X}_I$.
Hence, $\Inv(\vec{G}_1; {\cal X}_H\cup {\cal X}_I) = \Inv(\vec{G}_3; {\cal X}_I) = \vec{G}_2$.
Thus, $\vec{G}_1$ and $\vec{G}_2$ are at distance at most
$|{\cal X}_H| + |{\cal X}_I| \leq \id^{\leq p}(H) + \id^{\leq p}(I)$ in ${\mathcal{I}}^{\leq p}(G)$.
\end{proof}

A matching $M$ of a graph $G$ is an \textbf{induced matching} of $G$ if every edge connecting any two endvertices of edges of $M$ is in $M$.
Since a graph whose edge set is a matching of size at most $\lfloor p/2 \rfloor$ has $(\leq p)$-inversion diameter $1$, Proposition~\ref{prop:H+I} immediately yields the following.

\begin{corollary}\label{cor:induced-matching}
        Let $p \geq 2$ be an integer. 
        Let $G$ be a graph and let $M$ be an induced matching of $G$ of size at most $\lfloor p/2 \rfloor$. Then $\id^{\leq p}(G) \leq \id^{\leq p}(G \setminus M) + 1$.
    \end{corollary}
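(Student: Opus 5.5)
The plan is to apply Proposition~\ref{prop:H+I} with $H = G \setminus M$ and $I = G[V(M)]$, the subgraph induced by the endvertices of the edges of $M$. Here $G\setminus M$ denotes the spanning subgraph of $G$ obtained by deleting the edges of $M$ (keeping all vertices).

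First I check the covering hypothesis. Every edge of $G$ lies either in $M$ or in $E(G)\setminus M$. Since $M$ is an induced matching, every edge joining two endvertices of edges of $M$ belongs to $M$, so $E(I)=M$. Hence $G = H \cup I$, with $I$ an induced subgraph of $G$ and $H$ a subgraph. The edge sets of $H$ and $I$ are even disjoint, although Proposition~\ref{prop:H+I} does not require this.

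Next I bound $\id^{\leq p}(I)\leq 1$. The graph $I$ is a matching with at most $\lfloor p/2\rfloor$ edges and exactly $2|M|\leq p$ vertices. Take any two orientations $\vec{I}_1,\vec{I}_2$ of $I$ and let $X$ be the set of endvertices of the edges on which they disagree. Then $|X|\leq p$, and the only edges with both ends in $X$ are exactly those disagreeing edges, because $I$ has no other edges. So inverting $X$ (or doing nothing, if $X=\emptyset$) transforms $\vec{I}_1$ into $\vec{I}_2$.

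Combining these, Proposition~\ref{prop:H+I} gives $\id^{\leq p}(G)\leq \id^{\leq p}(G\setminus M)+\id^{\leq p}(I)\leq \id^{\leq p}(G\setminus M)+1$. There is no real obstacle here. The only point needing care is the verification that $E(I)=M$, which is exactly where the induced-matching hypothesis is used: without it, inverting $X$ could also flip edges of $H$ lying inside $V(M)$.
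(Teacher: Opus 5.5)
Your proof is correct and follows the paper's own route: the paper derives this corollary immediately from Proposition~\ref{prop:H+I}, taking $I$ to be the subgraph induced by the endvertices of $M$ (whose edge set is $M$) and using that a matching with at most $\lfloor p/2\rfloor$ edges has $(\leq p)$-inversion diameter at most $1$. You simply write out the verifications the paper leaves implicit, namely $E(I)=M$ and the single-inversion argument for $I$.
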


\begin{corollary}\label{cor:G-v}
        Let $p \geq 2$ be an integer and set $q = \lfloor p/2\rfloor$.  
        Let $G$ be a graph and $v$ a vertex of $G$.
\begin{itemize}        
     \item [(i)] $\id^{\leq p}(G) \leq \id^{\leq p}(G-v) + \left\lceil \frac{d(v)}{p-1} \right\rceil$.

     \item [(ii)] If $d(v) \geq q$, 
        then $\id^{\leq p}(G) \leq \id^{\leq p}(G-v) + \left\lfloor d(v) / q\right\rfloor$.
\end{itemize}        
    \end{corollary}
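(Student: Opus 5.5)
Both parts follow from Proposition~\ref{prop:H+I} by writing $G$ as the union of the induced subgraph $I = G-v$ and the star $H$ consisting of $v$ and its $d(v)$ incident edges. Then $\id^{\leq p}(G) \leq \id^{\leq p}(G-v) + \id^{\leq p}(H)$, and it only remains to bound the $(\leq p)$-inversion diameter of a star $K_{1,d}$ with $d = d(v)$.

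For (i), take two orientations of the star and let $F$ be the set of edges on which they disagree. Partition the leaves incident to $F$ into groups of size at most $p-1$. For each group $S$, invert $\{v\}\cup S$. This set has size at most $p$. Because the leaves are pairwise non-adjacent in the star, the inversion reverses exactly the edges from $v$ to $S$. We therefore need at most $\lceil |F|/(p-1)\rceil \leq \lceil d/(p-1)\rceil$ inversions, which gives (i).

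For (ii), with $d\geq q$, I need an alternative bound $\lfloor d/q\rfloor$, which is at least as good when $q \leq p-1$ in useful cases. First, using the same grouping trick, we may assume $|F| \geq d/2$. Otherwise, first invert the whole leaf set in groups to complement $F$. That costs inversions, though, so a better plan is to compare directly. If $|F| \leq d - q\lfloor d/q\rfloor$ or similar, there is slack. The key point is $\lceil d/(p-1)\rceil \leq \lfloor d/q\rfloor$ whenever $d\geq q$ and $p\geq 2$, because $p-1 \geq q$. Write $d = aq + r$ with $a\geq 1$ and $0 \leq r < q$. Then $d/(p-1) \leq d/(2q-1)$ when $p \geq 2q$, or $d/(2q)$ when $p = 2q+1$. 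For $q\geq 1$ and $a\geq 1$, this gives $(aq+r)/(2q-1) \leq (aq+q-1)/(2q-1)$. Its ceiling is at most $a$, since $aq+q-1 \leq a(2q-1)$ is equivalent to $q-1 \leq a(q-1)$, which holds as $a\geq 1$. Hence $\lceil d/(p-1)\rceil \leq a = \lfloor d/q\rfloor$, and (ii) follows from (i).

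The only delicate step is this arithmetic inequality, especially the boundary case $q=1$ ($p\in\{2,3\}$). There $\lfloor d/q\rfloor = d$, which is trivially at least $\lceil d/(p-1)\rceil$. The remaining steps are direct applications of Proposition~\ref{prop:H+I} and the fact that leaves of a star are independent.
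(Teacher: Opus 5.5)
Your proof is correct. Part (i) is the paper's argument: split the star at $v$ into substars of order at most $p$ and apply Proposition~\ref{prop:H+I}.

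For (ii) you take a different route. The paper gives a separate construction. It partitions $N(v)$ into $\lfloor d(v)/q\rfloor$ blocks $Y_i$ with $q\leq |Y_i|\leq 2q-1$, then inverts each set $\{v\}\cup\{w\in Y_i : vw\in E_{\neq}\}$. You instead derive (ii) from (i). Since $p-1\geq 2q-1$, writing $d=aq+r$ with $a\geq 1$ and $0\leq r<q$ gives $d\leq aq+q-1\leq a(2q-1)$, hence $\lceil d/(p-1)\rceil\leq\lfloor d/q\rfloor$.

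Your route shows that (ii) is never stronger than (i). Indeed, the paper's blocks have size at most $2q-1\leq p-1$, so they are just one admissible grouping for (i). The paper's direct construction buys only a self-contained statement in the form $\lfloor d(v)/q\rfloor$. That form is what combines with $\lceil (|E(G)|-d(v))/q\rceil$ in Lemma~\ref{lem:min-ce}, and your inequality supplies it just as well.

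Clean up the write-up of (ii) in three places:
\begin{itemize}
\item Delete the abandoned exploration at its start: the sentences ``we may assume $|F|\geq d/2$'', ``that costs inversions'', and ``or similar''.
\item Drop the remark that $\lfloor d/q\rfloor$ is ``at least as good'' as the bound from (i); for $d\geq q$ it never is.
\item Replace the justification ``because $p-1\geq q$'' with the inequality you actually use, $p-1\geq 2q-1$. The bound $p-1\geq q$ by itself would only yield $\lceil d/q\rceil$, which can exceed $\lfloor d/q\rfloor$.
\end{itemize}
The clause about $d/(2q)$ when $p=2q+1$ is superfluous, since $d/(p-1)\leq d/(2q-1)$ holds for every $p\geq 2$.
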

\begin{proof}
Let $H$ be the subgraph of $G$ induced by the edges incident to $v$. 

\medskip

(i) $H$ is a star with $d(v)$ leaves, which is the union of $\lceil \frac{d(v)}{p-1}\rceil$ edge-disjoint substars of order at most $p$. Each of these substars has $(\leq p)$-inversion diameter $1$, so $\id^{\leq p}(H) \leq \lceil \frac{d(v)}{p-1}\rceil$.
Now $G$ is the union of $H$ and $G-v$, so by Proposition~\ref{prop:H+I}, $\id^{\leq p}(G) \leq \id^{\leq p}(H) + \id^{\leq p}(G-v) \leq \id^{\leq p}(G-v) + \lceil \frac{d(v)}{p-1}\rceil$.
  
  \medskip
  
  (ii) 
    Let $\vec{H}_1$ and $\vec{H}_2$ be two orientations of $H$. 
    Then $N(v)$ can be partitioned into $t=\lfloor d(v)/q \rfloor$ sets $Y_1, \dots, Y_t$ of size at least $q$ and at most $2q-1$.
    For every $i\in [t]$, let $X_i = \{v\} \cup \{w\in Y_i \mid vw \in E_{\neq}\}$. Clearly, $|X_i| \leq |Y_i| +1 \leq 2q \leq p$. Thus $(X_i)_{i\in [t]}$ is an $(\leq p)$-family and $\Inv(\vec{H}_1 ; (X_i)_{i\in [t]}) = \vec{H}_2$. Thus $\id^{\leq p}(H) \leq \left\lfloor d(v) / q\right\rfloor$.

    Now $G$ is the union of $H$ and $G-v$, so by Proposition~\ref{prop:H+I}, $\id^{\leq p}(G) \leq \id^{\leq p}(H) + \id^{\leq p}(G-v) \leq \id^{\leq p}(G-v) + \left\lfloor d(v) / q\right\rfloor$.
\end{proof}

\section{General upper bounds}\label{sec:upper-gen}
%%%%%%%%%%%%%%%%%%%%%%%%

\begin{comment}

\begin{lemma}\label{lem:induced-matching}
        Let $p \geq 2$ be an integer.
        Let $G$ be a graph and let $M$ be an induced matching of $G$ of size at most $\lfloor p/2 \rfloor$. Then $\id^{\leq p}(G) \leq \id^{\leq p}(G \setminus M) + 1$.
    \end{lemma}
\begin{proof}
    Let $\vec{G}_1$ and $\vec{G}_2$ be two orientations of $G$.
    Let $M$ be an induced matching of $G$ of size at most $\lfloor p/2 \rfloor$. Let $G' = G - M$. Let $\mathcal{X}$ be a family of at most $\id^{\leq p}(G \setminus M)$ inversions such that $\Inv(\vec{G}_1 \setminus M; \mathcal{X}) = \vec{G}_2 \setminus M$. Let $\vec{H} = \Inv(\vec{G}_1, X)$ and let $M'$ be the set of arcs that disagree in $\vec{H}$ and $\vec{G}_2$. Note that $M' \subseteq M$. Let $Z$ be the set of vertices that are covered by $M'$. Note that, since $|M| = \lfloor p/2\rfloor$, it follows that $|Z| \leq p$. Also, $E(G[Z]) = M'$. Now $\mathcal{X'} = \mathcal{X} \cup \{Z\}$ is a family of $(\leq p)$-inversions such that $\Inv(\vec{G}_1; \mathcal{X'}) = \vec{G}_2$ and $|\mathcal{X'}| = |\mathcal{X}| + 1$. 
\end{proof}
\end{comment}

\begin{lemma}\label{lem:min-ce}
    Let $p \geq 2$ be an integer. Let $G$ be a graph such that every proper subgraph $G'$ of $G$ satisfies $\id^{\leq p}(G') \leq \left\lceil\frac{|E(G')|}{\lfloor p/2\rfloor}\right \rceil + \theta_p$ for some integer $\theta_p$. Suppose that $G$ satisfies one of the following properties:
    \begin{itemize}
        \item $\Delta(G) \geq \lfloor p/2\rfloor$, or 
        \item $G$ contains an induced matching of size $\lfloor p/2\rfloor$.
    \end{itemize}
    Then, $\id^{\leq p}(G) \leq \left\lceil\frac{|E(G)|}{\lfloor p/2\rfloor}\right \rceil + \theta_p$.
\end{lemma}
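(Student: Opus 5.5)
Set $q=\lfloor p/2\rfloor$. In both cases the plan is to remove from $G$ a set of at least $q$ edges (or exactly $q$ in the matching case) at the price of at most a matching number of extra inversions. We then apply the hypothesis to the resulting proper subgraph $G'$ and use the identity $\left\lceil\frac{|E(G)|-kq}{q}\right\rceil + k = \left\lceil\frac{|E(G)|}{q}\right\rceil$. We may assume $q\geq 1$, which holds since $p\geq 2$.

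\emph{Second case (induced matching).} Let $M$ be an induced matching of size exactly $q$ and put $G'=G\setminus M$. This is a proper subgraph because $M\neq\emptyset$. Corollary~\ref{cor:induced-matching} gives $\id^{\leq p}(G)\leq \id^{\leq p}(G')+1$. By hypothesis, $\id^{\leq p}(G')\leq \left\lceil\frac{|E(G)|-q}{q}\right\rceil+\theta_p = \left\lceil\frac{|E(G)|}{q}\right\rceil-1+\theta_p$, and adding the extra inversion yields the claim.

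\emph{First case ($\Delta(G)\geq q$).} Pick a vertex $v$ with $d(v)\geq q$ and let $G'=G-v$, obtained by deleting $v$ together with its incident edges. This is a proper subgraph with $|E(G')|=|E(G)|-d(v)$. Corollary~\ref{cor:G-v}(ii) gives $\id^{\leq p}(G)\leq \id^{\leq p}(G-v)+\lfloor d(v)/q\rfloor$. Write $d(v)=kq+r$ with $k=\lfloor d(v)/q\rfloor\geq 1$ and $0\leq r<q$. Then
\[
\left\lceil\frac{|E(G)|-d(v)}{q}\right\rceil + k \leq \left\lceil\frac{|E(G)|-kq}{q}\right\rceil + k = \left\lceil\frac{|E(G)|}{q}\right\rceil,
\]
because the ceiling is monotone and $d(v)\geq kq$. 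Adding $\theta_p$ gives the bound.

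There is one formal subtlety. If $G-v$ is read as keeping $v$ as an isolated vertex, it is still a proper subgraph. Alternatively one can use the subgraph with the edges at $v$ removed; deleting isolated vertices does not change the diameter. The main point to check is simply that Corollary~\ref{cor:G-v}(ii) is used with floor rather than ceiling, which is exactly what makes the arithmetic close with no loss. Everything else is routine.
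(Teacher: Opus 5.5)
Your proof is correct and follows the paper's argument essentially verbatim: in the high-degree case you delete $v$ and apply Corollary~\ref{cor:G-v}(ii), and in the matching case you delete $M$ and apply Corollary~\ref{cor:induced-matching}, in both cases then invoking the hypothesis on the resulting proper subgraph. Your explicit justification of $\lceil (|E(G)|-d(v))/q\rceil + \lfloor d(v)/q\rfloor \leq \lceil |E(G)|/q\rceil$ fills in the one arithmetic step the paper leaves implicit.
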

\begin{proof}
Set $q = \lfloor p/2\rfloor$.  
    Suppose first that $G$ contains a vertex $v$ of degree at least $q$.     
    By Corollary~\ref{cor:G-v}~(ii, $\id^{\leq p}(G) \leq \id^{\leq p}(G-v) + \left\lfloor d(v) / q\right\rfloor$.

    Moreover, by hypothesis, $\id^{\leq p}(G-v) \leq \left\lceil\frac{|E(I)|}{q}\right \rceil + \theta_p = \left\lceil\frac{|E(G) - d(v)|}{q}\right \rceil + \theta_p$. Hence, 

    \[
        \id^{\leq p}(G) \leq \left\lfloor\frac{d(v)}{q}\right\rfloor + \left\lceil\frac{|E(G) - d(v)|}{q}\right \rceil + \theta_p \leq \left\lceil\frac{|E(G)|}{q}\right \rceil + \theta_p.
    \]
    
    Suppose now that $G$ contains an induced matching $M$ of size $q$. Let $G' = G \setminus M$. By hypothesis, $\id^{\leq p}(G') \leq \left\lceil\frac{|E(G)| - q}{q}\right \rceil + \theta_p$. Thus, by Corollary~\ref{cor:induced-matching},
    \[
    \id^{\leq p}(G) \leq \left\lceil\frac{|E(G)| - q}{q}\right \rceil + \theta_p + 1 \leq \left\lceil\frac{|E(G)|}{\lfloor p/2\rfloor}\right \rceil + \theta_p.\qedhere
    \]
\end{proof}

A \textbf{strong edge-colouring} of $G$ is an edge-colouring of $G$ such that each colour class is an induced matching of $G$.
The \textbf{strong chromatic index} of a graph $G$, denoted by $\chi_s(G)$, is the minimum integer $k$ such that $G$ admits a strong edge-colouring with $k$ colours. One can easily show that $\chi_s(G) \leq 2\Delta(G)^2$. However, Erdős and Nešetřil~\cite{faudree1989induced} conjectured that $\chi_s(G) \leq 1.25\Delta(G)^2$. In 1997, Molloy and Reed~\cite{molloy1997bound} proved that there exists $\epsilon > 0$ such that $\chi_s(G) \leq (2-\epsilon)\Delta(G)^2$ for sufficiently large $\Delta(G)$. 
This result was improved by Bonamy et al.~\cite{bonamy2022colouring} and subsequently by Hurley, de Joannis de Verclos and Kang~\cite{Hurley2022} who showed that $\chi_s(G) \leq 1.772\Delta(G)^2$ for sufficiently large $\Delta(G)$.

\begin{lemma}
Let $G$ be a graph and let $p\geq2$ be an integer. Then, $\id^{\leq p}(G) \leq \frac{|E(G)|}{\lfloor p/2 \rfloor} + \chi_s'(G)$.    
\end{lemma}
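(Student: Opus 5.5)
Fix a strong edge-colouring of $G$ with $k=\chi_s'(G)$ colours, with colour classes $M_1,\dots,M_k$. Each $M_i$ is an induced matching of $G$. Set $q=\lfloor p/2\rfloor$ and split each $M_i$ into $\lfloor |M_i|/q\rfloor$ sub-matchings of exactly $q$ edges plus at most one leftover sub-matching of fewer than $q$ edges. A subset of an induced matching is still an induced matching, since any edge of $G$ joining endvertices of edges of a sub-matching $M'\subseteq M_i$ lies in $M_i$, and hence, being a matching edge with both ends covered by $M'$, lies in $M'$. So every piece is an induced matching of $G$ of size at most $q$.

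For a single piece $M'$ and two orientations $\vec G_1,\vec G_2$, let $X$ be the set of endvertices of the edges of $M'$ on which the current orientation and $\vec G_2$ disagree. Then $|X|\leq 2q\leq p$. The only edges of $G$ inside $X$ are those disagreeing edges of $M'$, because $M'$ is induced. Hence inverting $X$ fixes exactly those edges and touches nothing else.

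The cleanest route is Proposition~\ref{prop:H+I}, through Corollary~\ref{cor:induced-matching}, applied repeatedly. Order all pieces arbitrarily and remove them one at a time. Each removal costs at most one inversion, since a piece is an induced matching of $G$ and therefore of every spanning subgraph of $G$ still containing it. Alternatively, one can directly take $\mathcal X$ to be the family of the sets $X$ above, one per piece. The pieces partition $E(G)$, and each edge lies in exactly one set of the family whose endpoints it contains. So $\Inv(\vec G_1;\mathcal X)=\vec G_2$, and the order of the inversions is irrelevant.

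It remains to count the pieces:
\[
\sum_{i=1}^{k}\left\lceil \frac{|M_i|}{q}\right\rceil\leq \sum_{i=1}^{k}\left(\frac{|M_i|}{q}+1\right)=\frac{|E(G)|}{q}+k .
\]
This gives $\id^{\leq p}(G)\leq \frac{|E(G)|}{\lfloor p/2\rfloor}+\chi_s'(G)$. I expect no step to be a real obstacle. The only point needing care is the observation that sub-matchings of an induced matching remain induced, so that an inversion of a piece does not disturb other edges.
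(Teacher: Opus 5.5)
Your proof is correct and is essentially the paper's argument. The paper takes a minimum counterexample and uses Lemma~\ref{lem:min-ce} (that is, Corollary~\ref{cor:induced-matching} applied to induced matchings of size exactly $\lfloor p/2\rfloor$) to reduce to the case where every colour class has fewer than $\lfloor p/2\rfloor$ edges, then spends one inversion per class; you perform the same chopping of colour classes directly, which is slightly cleaner since Lemma~\ref{lem:min-ce} is stated for bounds of the form $\lceil |E|/q\rceil+\theta_p$ with a fixed constant rather than the graph-dependent term $\chi_s'(G)$.
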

\begin{proof}
    Let $G$ be a minimum counterexample for the statement. Let $\mathcal{S}$ be a minimum strong edge-colouring of $G$.
    By Lemma~\ref{lem:min-ce}, $G$ has no induced matching of size $\lfloor p/2 \rfloor$. This implies that for every $S \in \mathcal{S}$, $|S| < \lfloor p/2 \rfloor$. An easy induction using Corollary~\ref{cor:induced-matching} shows that $\id^{\leq p}(G) \leq \chi_s'(G)$, a contradiction. 
\end{proof}

% \begin{lemma}
%     Let $G$ be a graph and let $p\geq2$ be an integer. Then, $\id^{\leq p}(G) \leq \frac{|E(G)|}{\lfloor p/2 \rfloor} + \chi_s'(G)$.
% \end{lemma}
% \begin{proof}
% Set $q = \lfloor p/2 \rfloor$.
% Let $\vec{G}_1$ and $\vec{G}_2$ be two orientations of $G$ and let $E_{\neq}$ be the set of edges on which those two orientations disagree.
% Let $k = \chi_s'(G)$. Let $\{S_1, \ldots, S_k\}$ be a minimum strong edge-colouring of $G$. 
% Note that we can decompose $E(G)$ into $t = \sum_{i = 1}^k \left\lceil \frac{|S_i|}{q} \right\rceil$ pairwise disjoint induced matchings $M_1, \ldots, M_t$ of size at most $q$. Let $X_i$ be the set of vertices induced by $M_i \cap E_{\neq}$. So $|X_i| \leq p$,
% $\Inv(\vec{G}_1 ; (X_i)_{i\in [t]}) =\vec{G}_2$ and
% $|(X_i)_{i\in [t]}| \leq t = \sum_{i = 1}^k \left\lceil \frac{|S_i|}{q} \right\rceil 
% \leq \frac{|E(G)|}{q} + k$.
% \end{proof}

%\FH{I would not speak about $\epsilon$ in the two following statement and just add a remark that the bound on $b$ can be imprved using the reult of Molly Reed and Bonamy et al.}

\begin{corollary}\label{cor:bound-Delta}
    Let $G$ be a graph and let $p\geq2$ be an integer. Then, $\id^{\leq p}(G) \leq \frac{|E(G)|}{\lfloor p/2 \rfloor} + 2\Delta(G)^2$.
\end{corollary}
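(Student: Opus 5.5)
This corollary follows from the preceding lemma, which gives $\id^{\leq p}(G) \leq \frac{|E(G)|}{\lfloor p/2 \rfloor} + \chi_s'(G)$, combined with the classical bound $\chi_s'(G) \leq 2\Delta(G)^2$ mentioned just before the lemma. So the plan is simply to justify that bound and substitute it.

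To justify $\chi_s'(G) \leq 2\Delta(G)^2$, I would consider the conflict graph $L$. Its vertices are the edges of $G$, and two edges are adjacent in $L$ when they cannot receive the same colour in a strong edge-colouring. This happens exactly when they share an endvertex or some edge of $G$ joins an endvertex of one to an endvertex of the other, i.e.\ when they are at distance at most $1$ in the line graph, so $L$ is the square of the line graph.

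Next I would bound the degree of an edge $uv$ in $L$. There are at most $2(\Delta-1)$ edges sharing an endvertex with $uv$. Each of these has at most $\Delta-1$ further edges at its far endvertex. This gives at most $2(\Delta-1)+2(\Delta-1)^2 = 2\Delta(\Delta-1) < 2\Delta^2$ conflicting edges. A greedy colouring of $L$ then uses at most $2\Delta(\Delta-1)+1 \leq 2\Delta^2$ colours whenever $\Delta \geq 1$, which is a strong edge-colouring of $G$. If $\Delta(G)=0$, then $G$ has no edges, $\chi_s'(G)=0$, and the bound is trivial.

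Substituting $\chi_s'(G)\leq 2\Delta(G)^2$ into the lemma gives the statement. There is no real obstacle here; the only point needing care is the count of conflicting edges, namely that an edge $xy$ with $x$ adjacent to $u$ is counted via the edge $ux$.
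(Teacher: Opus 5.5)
Your proposal is correct and follows the paper's route: the paper obtains this corollary by plugging the remark that $\chi_s'(G)\leq 2\Delta(G)^2$ into the preceding lemma. Your greedy colouring of the square of the line graph, with at most $2\Delta(\Delta-1)$ conflicts per edge, is exactly the ``easy'' justification of that remark which the paper leaves implicit.
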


\uppergen*

\begin{proof}
Let $G$ be minimal counterexample for the statement. By Lemma~\ref{lem:min-ce}, $\Delta(G) \leq q =\lfloor p/2\rfloor - 1$. Then, the result follows directly by Corollary \ref{cor:bound-Delta}.
\end{proof}

Observe that the upper bound of $\frac{1}{2}p^2$ in the statement of the previous two results can be slightly improved using the results of~Hurley, de Joannis de Verclos and Kang~\cite{Hurley2022}.

\subsection{Exact values of \texorpdfstring{$\Psi_p$}{Psi\_p}, when \texorpdfstring{$p$}{p} is small}
%%%%%%%%%%%%%%%%%%%%%%%%%%%%%%%%%%%%%

\begin{theorem}\label{thm:psi}
    Let $p$ be an integer with $p \geq 2$. 
    \begin{itemize}
        \item[(i)] If $p\leq 5$, then $\Psi_p =0$.
        
        \item[(ii)] If $p\in\{6,7,8,9\}$, then  $\Psi_p =1$.
    \end{itemize}
\end{theorem}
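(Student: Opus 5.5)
The plan is to run the minimal-counterexample scheme of Lemma~\ref{lem:min-ce} with $\theta_p\in\{0,1\}$, and to get the lower bounds from explicit small graphs. Throughout, set $q=\lfloor p/2\rfloor$.

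\emph{Lower bounds.} Matching graphs $M$ satisfy $\id^{\leq p}(M)=\lceil |E(M)|/q\rceil$, so $\Psi_p\geq 0$ for every $p$. For $p\in\{6,\dots,9\}$ we have $q\in\{3,4\}$, and I would use the triangle $K_3$. Since $3\leq p$, every subset of $V(K_3)$ is an admissible inversion, so $\id^{\leq p}(K_3)=\id(K_3)=2$ by the result of \cite{HHR24} that $\id(K_n)=n-1$. (Directly: the orientations of $K_3$ that differ from a given one in exactly one arc cannot be reached by a single inversion.) Since $\lceil 3/q\rceil=1$, this gives $\Psi_p\geq 1$.

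\emph{Upper bound for (i).} If $p\in\{2,3\}$ then $q=1$, and $\id^{\leq p}(G)\leq|E(G)|=\lceil|E(G)|/q\rceil$ by Equation~\eqref{eq:easy}. If $p\in\{4,5\}$, take a minimal counterexample $G$ to $\id^{\leq p}(G)\leq\lceil|E(G)|/2\rceil$. By Lemma~\ref{lem:min-ce} with $\theta_p=0$, $G$ has $\Delta(G)\leq 1$ and no induced matching of size $2$. So $G$ is a matching with at most one edge, whence $\id^{\leq p}(G)\leq 1=\lceil|E(G)|/2\rceil$, a contradiction.

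\emph{Upper bound for (ii).} Take a minimal counterexample $G$ to $\id^{\leq p}(G)\leq\lceil|E(G)|/q\rceil+1$, which we may assume has no isolated vertices. By Lemma~\ref{lem:min-ce} with $\theta_p=1$, $\Delta(G)\leq q-1$ and $G$ has no induced matching of size $q$. This forces $G$ to be a small explicit graph, and the task is to handle each case.

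For $q=3$, $G$ has maximum degree at most $2$, so it is a disjoint union of paths and cycles. The absence of an induced $3$-matching bounds it: paths with at most $7$ edges, cycles of length at most $8$, at most two components in total, with further restrictions when there are two. For each such $G$ we need $\id^{\leq p}(G)\leq\lceil|E(G)|/3\rceil+1$. The tools are Proposition~\ref{prop:H+I}, Corollary~\ref{cor:induced-matching}, and the observation that a set of at most $6$ vertices inducing a path $P_3$ plus a disjoint edge (or similar) can be inverted to flip exactly the chosen disagreeing edges. For a component on at most $p$ vertices, Theorem~\ref{thm:diam-upper} gives $\id\leq n-1$, which suffices when the component has few vertices. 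For short cycles $C_k$ and paths I would pick explicit inversion families: for example, flip the disagreeing edges of a path by inverting consecutive subpaths and sets of the form ``vertex plus two neighbours'', using $\lceil k/3\rceil+1$ sets.

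For $q=4$, $\Delta(G)\leq 3$ and there is no induced $4$-matching. I would again apply Lemma~\ref{lem:min-ce} with the vertex-deletion argument of Corollary~\ref{cor:G-v}(i): deleting a vertex $v$ costs $\lceil d(v)/(p-1)\rceil=1$ but only removes $d(v)\leq 3<q$ edges, so it is not enough on its own. Instead I would show directly that such $G$ has a decomposition into $\lceil|E(G)|/4\rceil+1$ parts, each of which can be fixed by one $(\leq p)$-inversion after the earlier ones. The natural parts are induced subgraphs on at most $p$ vertices, whose edge sets can be handled by one inversion only when restricted to matchings or stars, plus a final correction set.

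The main obstacle is (ii), especially $q=4$. Bounding the structure of a cubic-or-less graph with no induced $4$-matching, and then finding a uniform inversion scheme with only one extra inversion, requires a genuine case analysis. I would first try to prove that such a $G$ has at most about $2q$ edges spread over few vertices (at most $p$ or $p+1$). Then either Theorem~\ref{thm:diam-upper} applies to the whole graph, or removing one vertex via Proposition~\ref{prop:H+I} reduces to that case.
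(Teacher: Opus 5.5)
Your lower bounds and part (i) are correct and follow the paper. There is one slip: a single arc of $K_3$ \emph{can} be flipped by inverting its two endpoints; it is the orientations differing in exactly two arcs that need two inversions.

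\textbf{Gap for $q=3$.} Part (ii) is not proved. The missing idea is exactly the fix for the weakness you noticed in Corollary~\ref{cor:G-v}(i): remove \emph{two} far-apart vertices with one inversion. This is the key claim of the paper's proof.

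Suppose $x,y$ are at distance at least $3$ and $d(x)+d(y)\geq q$. Invert $X=\{x,y\}\cup\{w\in N(x) : xw\in E_{\neq}\}\cup\{z\in N(y) : yz\in E_{\neq}\}$. Since $\Delta(G)\leq q-1$, we have $|X|\leq 2q\leq p$. Since $x,y$ are non-adjacent with no common neighbour, this fixes every edge at $x$ and $y$. Anything else it flips lies in $G-\{x,y\}$, which has at least $q$ fewer edges, so minimality on $G-\{x,y\}$ (with the updated orientation) gives a contradiction. Hence in a minimal counterexample $d(x)+d(y)<q$ whenever $x,y$ are at distance at least $3$.

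For $q=3$ this forces $G$ to be a single path or cycle on at most $5$ vertices, settled by $\id\leq 2$. Your enumeration then becomes unnecessary; it should in any case stop at paths with $6$ edges, since the path with $7$ edges has an induced $3$-matching. Also, Theorem~\ref{thm:diam-upper} is too weak as a finishing tool even on tiny components. It gives $3$ for the path on $4$ vertices and $4$ for $C_5$, while the required bounds are $2$ and $3$. You need $\id\leq 2$ for forests and short cycles instead.

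\textbf{Gap for $q=4$.} Here there is no argument, and the structural statement you plan to prove is false. The Petersen graph has $\Delta=3$ and diameter $2$. It has no induced matching of size $4$, because any $8$ of its vertices induce at least $9$ edges. So it survives Lemma~\ref{lem:min-ce} and even the distance-$3$ reduction above, yet it has $10$ vertices and $15$ edges.

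What the reduction does give is that every non-isolated vertex lies within distance $2$ of every $3$-vertex, so $|V(G)|\leq 10$. One must then genuinely bound $\id^{\leq 8}$ on such graphs, using Proposition~\ref{prop:H+I} with forests and small dense pieces. For the Petersen graph, the five edges at two adjacent vertices form a tree. The rest is an induced $8$-vertex subdivision of $K_4$ with inversion diameter at most $3$. Together these give $5=\lceil 15/4\rceil+1$.

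Be aware that the paper's own case analysis for $q=4$ also has gaps here. It asserts that two adjacent $3$-vertices and their neighbours exhaust $V(G)$, which is false for the Petersen graph. It also asserts that at most one edge avoids both of them when they have no common neighbour, which is false for $K_{3,3}$. Both graphs do satisfy the bound, but this step needs more care than either account gives it.
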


\begin{proof}
We first show that $\id^{\leq p}(G) \geq \left\lceil\frac{|E(G)|}{2} \right\rceil$ for $p \in \{2,3,4,5\}$ and $\id^{\leq p}(G) \geq \frac{|E(G)|}{2} + 1$ for $p \in \{6,7,8,9\}$.

Consider first a matching graph $G$. Note that $\conv^{\leq p}(G) = \left\lceil \frac{E(G)}{2} \right\rceil$ for $p \in \{2, 3\}$. 
Thus, $\id^{\leq p}(G) \geq  \left\lceil\frac{|E(G)|}{\lfloor p/2 \rfloor}\right\rceil$ for $p \in \{2,3\}$.

Consider now a graph $G$ isomorphic to $K_3$. 
Note that $\id^{\leq p}(K_3) = 2$ for every $p \geq 2$, since reversing exactly two edges of $G$ requires two inversions. 
Since $\left\lceil\frac{|E(G)|}{\lfloor p/2 \rfloor} \right\rceil = 2$ for $p \in \{4, 5\}$, 
it follows that $\id^{\leq p}(G) \geq \left\lceil \frac{|E(G)|}{\lfloor p/2 \rfloor} \right\rceil$. 
Similarly, since $\left\lceil \frac{|E(K_3)|}{\lfloor p/2 \rfloor} \right\rceil = 1$ for $p \geq 6$,
it follows that $\id^{\leq p}(G) \geq \left\lceil \frac{|E(G)|}{\lfloor p/2 \rfloor} \right\rceil + 1$ for $p \in \{6,7,8,9\}$

\medskip

Let us now prove the opposite inequalities, which mean that, for any graph $G$,
$\id^{\leq p}(G) \leq \left\lceil\frac{|E(G)|}{\lfloor p/2 \rfloor}\right \rceil + \Psi_p$, 
with $\Psi_p =0$ if $p \in \{2,3,4,5\}$ and $\Psi_p = 1$ if $p\in \{6,7,8,9\}$.
Note that it suffices to show the result for even values of $p$, that is, $p \in \{ 2,4,6,8\}$. We set $q = p/2$.

We do it by considering a minimum counterexample $G$. 
By Lemma~\ref{lem:min-ce}, we may assume that $\Delta(G) < q$.
This gives a contradiction when $p=2$, as an edgeless graph is clearly no counterexample. 
If $p = 4$, then $\Delta(G) \leq 1$, so $G$ is a matching and $\id^{\leq 4}(G) \leq \left\lceil \frac{|E(G)|}{2} \right \rceil$, a contradiction to $G$ counterexample.
So assume $p \in \{6, 8\}$.

\begin{claim}\label{claim:2sommet}
    If two vertices $x$ and $y$ are at distance at least $3$, then $d(x) +d(y) < q$.  
\end{claim}

\begin{subproof}
Assume for a contradiction that $x$ and $y$ are at distance at least 3 and $d(x) +d(y) \geq q$.
Let $X =\{x,y\} \cup \{w\in N(x) \mid xw \in E_{\neq}\} \cup \{z\in N(y) \mid yz \in E_{\neq}\}$. By Lemma~\ref{lem:min-ce}, $|X| \leq d(x) + d(y) + 2 \leq 2(q-1) + 2 \leq p$.

$\Inv(\vec{G}_1 ; X)$ and $\vec{G}_2$ agree on all arcs incident to $x$ and $y$.
Let $H=G-\{x,y\}$, $\vec{H}_1 = \Inv(\vec{G}_1 ; (X)) -\{x,y\}$ and $\vec{H}_2= \vec{G}_2 -\{x,y\}$.
By the minimality of $G$, there is a family $\mathcal{Z}$ of at most 
$\left\lceil\frac{|E(H)|}{q}\right \rceil+ \Psi_p$ sets of size at most $p$ whose inversion transforms $\vec{H}_1$ into $\vec{H}_2$.
Now $\Inv(\vec{G}_1 ; \{X\}\cup \mathcal{Z}) =\vec{G}_2$,
and $|\{X\}\cup \mathcal{Z}| \leq \left\lceil\frac{|E(H)|}{q}\right \rceil +\Psi_p + 1 \leq \left\lceil\frac{|E(G)|}{q}\right \rceil +\Psi_p$, a contradiction.
\end{subproof}

Assume $p =6$. Lemma~\ref{lem:min-ce} yields $\Delta(G) \leq 2$, so $G$ is the disjoint union of paths and cycles.
Since $G$ is not a matching graph, it has a component $C$ of $G$ is a cycle or a path of order at least $3$. This component has a vertex $v$ of $C$ has degree $2$. Thus, by Claim~\ref{claim:2sommet}, all vertices are distance at least $3$ from $v$ have degree $0$, and so are isolated. But $G$ has no isolated vertices, thus
$G$ is either a path or a cycle of order at most $5$. 
Hence $\id^{\leq p}(G) \leq 2 \leq \left\lceil\frac{|E(G)|}{\lfloor p/2\rfloor}\right \rceil +1$, a contradiction.

\medskip

Assume $p=8$, so $q=4$
and $\Delta(G) \leq 3$.
If $\Delta(G) = 1$, then $G$ is a matching graph and
$\id^{\leq p}(G) \leq \left\lceil\frac{|E(G)|}{\lfloor p/2\rfloor}\right \rceil$, a contradiction.

Assume now $\Delta(G) = 2$. 
Every vertex at distance at least $3$ from a $2$-vertex has degree at most $1$.
Thus $G$ is a path or a cycle of order at most $5$.
Hence $\id^{\leq p}(G) \leq 2 \leq \left\lceil\frac{|E(G)|}{\lfloor p/2\rfloor}\right \rceil +1$, a contradiction. 

Assume finally $\Delta(G) = 3$. 
Let $v$ be a $3$-vertex. By Claim~\ref{claim:2sommet}, every vertex at distance at least $3$ from $v$ has degree $0$, which is impossible in a minimum counterexample. So all vertices of $G$ are at distance at most $2$ from $v$.
Moreover, by Claim~\ref{claim:2sommet}, two $2$-vertices are at distance at most $2$.

\begin{itemize}
\item Assume $G$ has at least two $3$-vertices.

If no two $3$-vertices are adjacent, then $G$ it is the complete bipartite graph $K_{2,3}$. As $\id^{\leq 8}(K_{2,3}) =  \id(K_{2,3}) = 2$, we get a contradiction.

Henceforth, $G$ has two adjacent $3$-vertices, $v$ and $w$. Then the vertices of $G$ are  vertices $v$, $w$, the two neighbours $v_1, v_2$ of $v$ distinct from $w$ and the two neighbours $w_1, w_2$ of $w$ distinct from $v$ with possibly
$v_1=w_1$ and $v_2=w_2$.
So $|E(G)|\geq 5$, and $\left\lceil\frac{|E(G)|}{\lfloor p/2\rfloor}\right \rceil +1 \geq 3$.
If $v_1=w_1$ and $v_2=w_2$, then either $G$ is 
isomorphic to $K_4$ or $G$ is a subgraph of $G'$ where $V(G') = \{v, w, v_1, v_2, t\}$ and $E(G')=\{vw, vv_1, vv_2, wv_1, wv_2, v_1t, v_2t\}$.
In the first case, $\id(G) = \id(K_4) = 3$, a contradiction. 
In the second case, observe that $\id^{\leq p}(G) \leq \id^{\leq p}(G')$. Moreover, $G'$ is the union of two stars of order $4$ with centers $v_1$ and $v_2$ and edge $vw$ which forms an induced $K_2$. Those three graphs have $(\leq p)$-inversion number $1$, thus, by Proposition~\ref{prop:H+I} applied twice, we get $\id^{\leq p}(G) \leq \id^{\leq}(G') \leq 3$, a contradiction.

If $v_1=w_1$ and $v_2\neq w_2$, then by the above arguments, the only possible edge incident to neither $v$ nor $w$ is $v_2w_2$.
Thus $G$ is the union of the tree $T=G\setminus \{vv_1, vv_2\}$ and the induced subgraph $I=G\langle \{v,v_1, v_2\}\rangle$.
Now $\id^{\leq p}(T) = \id(T) = 2$ and $\id^{\leq p}(I) = \id(I) = 1$. Thus, by Proposition~\ref{prop:H+I}, $\id^{\leq p}(G)\leq 3$, a contradiction.
If $v_1\neq w_1$ and $v_2\neq w_2$, then by the above arguments, $G$ has at most one edge $e$  incident to neither $v$ nor $w$. Thus $T$ is the union of the tree $T$ whose edges are those incident to $v$ or $w$, and 
the subgraph induced by the two endvertices of $e$.
Now $\id^{\leq p}(T) = \id(T) = 2$ and $\id^{\leq p}(I) = \id(I) = 1$. Thus, by Proposition~\ref{prop:H+I}, $\id^{\leq p}(G)\leq 3$, a contradiction.

\item If $v$ is the unique $3$-vertex of $G$, then one easily checks that $G$ is either  a tree of order at most $7$, or a triangle plus a pendant path of length at most $2$, or a 4-cycle and a pendant edge, or a 5-cycle with a pendant edge.
If $G$ is a tree, then $\id^{\leq p}(G) = \id(G) = 2$, a contradiction. If $G$ is a triangle plus a pendant edge, then $G$ is the union of a star of order $3$ and an edge which forms an induced $K_2$. Those two graphs have $(\leq p)$-inversion number $1$, thus, by Proposition~\ref{prop:H+I}, $\id^{\leq p}(G)\leq 2$, a contradiction. In all other cases, $|E(G)| \geq 5$ and $G$ is the union of a tree $T$ of order at most $6$ and an edge which forms an induced $K_2$.
Now $\id^{\leq p}(T) = \id(T) = 2$ and $\id^{\leq p}(K_2) = \id(K_2) = 1$. Thus, by Proposition~\ref{prop:H+I}, $\id^{\leq p}(G)\leq 3  \leq \left\lceil\frac{|E(G)|}{4}\right \rceil +1$, a contradiction.
\end{itemize}

\end{proof}

\subsection{Better bounds for not too sparse graphs}
%%%%%%%%%%%%%%%%%%%%%%%%%%%%%%%%%%%%%%%%%%%%%%%%

\begin{theorem}\label{thm:procedure1}
Let $p$ be an integer greater than $2$ and let $G$ be a graph. Then $$\id^{\leq p}(G) \leq \frac{1}{p-1}|E(G)| + \frac{p-2}{p-1}(|V(G)| -1).$$
\end{theorem}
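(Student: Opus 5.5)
Induction on the number of vertices, removing one vertex at a time. Write $n=|V(G)|$ and $m=|E(G)|$, and let $f(G)=\frac{m}{p-1}+\frac{p-2}{p-1}(n-1)$. Note that $f(G)\geq 0$ whenever $n\geq 1$, and that the statement is trivial for $n=1$.

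For the inductive step, pick any vertex $v$ and apply Corollary~\ref{cor:G-v}(i):
\[
\id^{\leq p}(G)\leq \id^{\leq p}(G-v)+\left\lceil \frac{d(v)}{p-1}\right\rceil .
\]
By induction, $\id^{\leq p}(G-v)\leq \frac{m-d(v)}{p-1}+\frac{p-2}{p-1}(n-2)$. It therefore suffices to show
\[
\left\lceil \frac{d(v)}{p-1}\right\rceil\leq \frac{d(v)}{p-1}+\frac{p-2}{p-1}.
\]
Writing $d(v)=k(p-1)-r$ with $0\leq r\leq p-2$, the left side equals $k$ and the right side equals $k+\frac{p-2-r}{p-1}\geq k$, so the inequality holds. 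Chaining this over a removal ordering of all vertices then gives the bound.

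The only point needing care is the base case together with integrality. The intermediate bounds are not integers, but since $\id^{\leq p}$ is an integer bounded by a real quantity, that causes no problem. Disconnected graphs are also fine: the additive term $\frac{p-2}{p-1}$ per removed vertex is charged to every vertex except one, and it is nonnegative, so isolated vertices cost at most that amount.

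Nothing here seems to be a genuine obstacle. The main step is verifying the ceiling inequality, which is immediate since $\lceil x/(p-1)\rceil\leq (x+p-2)/(p-1)$ for every integer $x\geq 0$. This also explains the exact shape of the bound: each vertex removal pays its degree divided by $p-1$, plus at most $\frac{p-2}{p-1}$ of rounding loss.
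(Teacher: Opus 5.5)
Your proof is correct and is the paper's argument: induction on $|V(G)|$ using Corollary~\ref{cor:G-v}(i), with the rounding loss $\lceil d(v)/(p-1)\rceil - d(v)/(p-1) \leq \frac{p-2}{p-1}$ absorbed by the per-vertex term. Your explicit check of the ceiling inequality is cleaner than the paper's displayed chain, whose last line is a typo for the target bound $\frac{1}{p-1}|E(G)| + \frac{p-2}{p-1}(|V(G)|-1)$.
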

\begin{proof}
We prove the result by induction on $|V(G)|$, the result holding trivially if $|V(G)|=1$.

Assume now that $|V(G)|\geq 1$.
Let $v$ be a vertex of $G$.
By Corollary~\ref{cor:G-v}~(i), 
$\id^{\leq p}(G) \leq \id^{\leq p}(G-v) + \left\lceil \frac{d(v)}{p-1}\right \rceil$.
Moreover, by the induction hypothesis, 
$\id^{\leq p}(G-v) \leq \frac{1}{p-1}|E(G-v)| + \frac{p-2}{p-1}(|V(G)| -2)$.
Thus 
\begin{eqnarray*}
\id^{\leq p}(G) & \leq &  \frac{1}{p-1}|E(G-v)| + \frac{p-2}{p-1}(|V(G)| -2) + \left\lceil \frac{d(v)}{p-1}\right \rceil\\
& = & \frac{1}{p-1}|E(G)| - \frac{d(v)}{p-1} + \left\lceil \frac{d(v)}{p-1}\right \rceil  + \frac{p-2}{p-1}(|V(G)| -2) \\
& \leq &  \frac{1}{p-1}|E(G)| + \frac{p-2}{p-1} + \frac{p-2}{p-1}(|V(G)| -2)\\
& = & \id^{\leq p}(G-v) + \left\lceil \frac{d(v)}{p-1}\right \rceil
\end{eqnarray*}
\end{proof}

\section{Connected graphs}\label{sec:upper-connected}
%%%%%%%%%%%%%%%%%%%%%%%%

We shall need the following result due to Kotzig~\cite{kotzig1957theory}.
\begin{lemma}[Kotzig~\cite{kotzig1957theory}]\label{lem:decomp-P3}
Every connected graph $G$ can be edge-decomposed into $\left\lceil \frac{|E(G)|}{2} \right\rceil$ paths of order at most $3$. 
\end{lemma}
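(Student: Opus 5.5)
Kotzig's lemma is a classical result, and the paper cites it rather than proving it; still, here is how I would prove it. The plan is induction on $|E(G)|$, with a slightly stronger statement so that the induction goes through: every connected graph with an even number of edges decomposes into $|E(G)|/2$ paths of length exactly $2$ (paths of order $3$). The odd case then follows. If $|E(G)|$ is odd, I would add a new pendant vertex $x$ attached to any vertex $u$. The new graph is connected with an even number of edges, so it decomposes into paths of length $2$. The path containing the edge $ux$ becomes a single edge once $x$ is deleted, which gives $\lceil |E(G)|/2\rceil$ paths of order at most $3$.

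For the even case, I would use a direct structural argument instead of a bare induction, since deleting two edges can disconnect the graph. I would fix a spanning tree $T$ of $G$, rooted at some vertex $r$. Each non-tree edge is first assigned to one of its endvertices, chosen arbitrarily. Then I process the vertices in order of non-increasing depth, so leaves come first. At a vertex $v$, let $A(v)$ be the set of edges currently assigned to $v$ and not yet used. This set consists of the non-tree edges assigned to $v$ together with the tree edges to children of $v$ that were passed up. I pair the edges of $A(v)$ arbitrarily; each pair forms a path of order $3$ centred at $v$, because two distinct edges at $v$ in a simple graph have distinct other endpoints. If $|A(v)|$ is odd, I also include the edge from $v$ to its parent in the pairing. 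If $|A(v)|$ is even, I pass the parent edge up, so it joins $A(\mathrm{parent}(v))$.

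The key invariant is that, after processing $v$, every edge of the subtree below $v$ has been used, except possibly the parent edge of $v$, which has been passed up. A parity count shows when the parent edge is passed up: this happens exactly when the number of edges in the subtree of $v$, plus the non-tree edges assigned within it, is odd. The main obstacle is the root, where no parent edge is available. At $r$, the number of unused edges equals $|E(G)|$ minus twice the number of paths already formed. Since $|E(G)|$ is even, this number is even, so $A(r)$ can be paired completely.

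This yields exactly $|E(G)|/2$ paths of order $3$ in the even case. Together with the pendant-vertex trick for the odd case, this proves the lemma.
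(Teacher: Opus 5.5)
Your proof is correct. It necessarily differs from the paper, which gives no argument at all: it cites Kotzig's theorem (a connected graph with an even number of edges decomposes into paths of length $2$) and uses it as a black box in Lemma~\ref{lem:E+tau} and Corollary~\ref{cor:E+nu}.

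You instead give a self-contained proof. You fix a rooted spanning tree, charge each non-tree edge to one endpoint, and sweep from the leaves to the root, pairing the edges of $A(v)$ at each vertex and using the parent edge to repair odd parity. The only vertex without such a repair is the root. There $|A(r)|=|E(G)|-2k$, with $k$ the number of paths already formed, which is even. Together with the pendant-vertex reduction, this gives exactly the $\lceil |E(G)|/2\rceil$ bound the paper needs. The citation keeps the paper short; your argument makes it self-contained and explicitly constructive.

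Two minor remarks.

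\begin{itemize}
\item The auxiliary pendant vertex is not needed. When $|E(G)|$ is odd, the same sweep ends with $|A(r)|$ odd. You can pair all but one edge of $A(r)$ and keep the last one as a single-edge path, again giving $(|E(G)|+1)/2$ paths in total.
\item Your parity aside is off by one as literally worded. Count only the edges with both ends in the subtree of $v$ plus the non-tree edges charged to that subtree. Then the parent edge of $v$ is passed up exactly when this count is \emph{even}: a leaf with nothing charged to it has count $0$ and passes its edge up. The count is ``odd'' only if the parent edge itself is included. That aside is never used, since the root argument stands on its own, so the proof is unaffected.
\end{itemize}
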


A {\bf triangle-transversal} in a graph $G$ is a set $F$ of edges such that of $G\setminus F$ has no triangle. The {\bf triangle-transversal number} of a graph $G$, denotes by $\tau_3(G)$, is the minimum size of a triangle-transversal. Since every graph $G$ has a bipartite subgraph with at least $\left\lceil |E(G)|/2 \right \rceil$ edges, $\tau_3(G)\leq \left\lfloor |E(G)|/2 \right \rfloor$.

\begin{proposition}\label{prop:G-F-connected}
    Let $G$ be a connected graph.
    If $F$ is a minimum-size triangle-transversal of $G$, then $G\setminus F$ is connected.
\end{proposition}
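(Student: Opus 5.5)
We argue by contradiction, using the minimality of $F$ to exhibit a smaller triangle-transversal. Suppose $G\setminus F$ is disconnected. Since $G$ is connected and $G\setminus F$ has the same vertex set, some edge $e=uv\in F$ joins two distinct components of $G\setminus F$: if no edge of $F$ joined different components, then $G$ itself would be disconnected. Fix such an edge $e$, with $u$ in a component $C_u$ and $v$ in a different component $C_v$ of $G\setminus F$.

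The key step is to show that $F'=F\setminus\{e\}$ is still a triangle-transversal, which contradicts the minimality of $|F|$. Suppose $G\setminus F'$ contains a triangle. This triangle must use $e$, since every triangle avoiding $e$ already lies in $G\setminus F$, which has none. So the triangle is $uvw$ for some vertex $w$, and its other two edges $uw$ and $vw$ are not in $F'$. They are also different from $e$, so they are not in $F$, and hence both lie in $G\setminus F$.

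It follows that $u$ and $v$ are joined in $G\setminus F$ by the path $u w v$, so they belong to the same component. This contradicts the choice of $e$ joining $C_u$ and $C_v$. Therefore $F\setminus\{e\}$ is a smaller triangle-transversal, contradicting the minimality of $F$, and so $G\setminus F$ is connected.

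The whole argument is short. The only point needing care is the first step, that a disconnected $G\setminus F$ forces an $F$-edge between two components; this is immediate from the connectivity of $G$.
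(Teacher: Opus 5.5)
Your proof is correct and rests on the same key fact as the paper's: by minimality, each edge $f\in F$ lies in a triangle whose other two edges avoid $F$, so its endpoints are joined by a path of length $2$ in $G\setminus F$. The paper uses this directly, rerouting every walk of $G$ into $G\setminus F$, whereas you apply it by contradiction to a single $F$-edge joining two components.
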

\begin{proof}
Let $F$ be a minimum-size triangle-transversal of $G$.
Every edge $f$ of $F$ is contained in a triangle $T_f$ in $(G\setminus F)\cup f = G\setminus (F\setminus\{f\})$, for otherwise $F\setminus\{f\}$ would be a triangle-transversal of $G$.
Thus every walk in $G$ can be transformed in a walk in $G\setminus F$ with same end-vertices
by replacing every edge $f$ of $F$ by the path of length 2 between its end-vertices in $T_f$.
Thus as $G$ is connected, we get that $G\setminus F$ is also connected.
\end{proof}

\begin{lemma}\label{lem:E+tau}
    Let $G$ be a connected graph. Then $\displaystyle \id^{\leq 3}(G) \leq \left\lceil \frac{|E(G)| + \tau_3(G) }{2} \right\rceil$.
\end{lemma}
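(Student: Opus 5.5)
Let $F$ be a minimum triangle-transversal of $G$, so $|F| = \tau_3(G)$, and set $H = G\setminus F$. By Proposition~\ref{prop:G-F-connected}, $H$ is connected, and by construction $H$ is triangle-free. Applying Kotzig's Lemma~\ref{lem:decomp-P3} to $H$, we edge-decompose $H$ into $\left\lceil \frac{|E(H)|}{2}\right\rceil$ paths of order at most $3$. Each such path $P$ is either a single edge or a path $xyz$. In the latter case $xz\notin E(H)$, since $H$ is triangle-free. However, $xz$ may be an edge of $F$, and this is the point to handle.

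Given orientations $\vec{G}_1,\vec{G}_2$, the plan is to fix the arcs on which they disagree using one $(\leq 3)$-set per path and one $2$-set per edge of $F$. For a path $P=xyz$ of the decomposition, invert $X_P$, the set consisting of $y$ together with those of $x,z$ whose edge to $y$ lies in $E_{\neq}$. If both edges disagree, then $X_P=\{x,y,z\}$, which also reverses $xz$ if $xz\in F$. For a single-edge path, invert its two endpoints if the edge is in $E_{\neq}$. The sets $X_P$ have size at most $3$.

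The key observation is that each such set contains no edge of $H$ other than those of $P$. Indeed, $P$ spans at most the three vertices $x,y,z$, $xz\notin E(H)$, and the paths are edge-disjoint. So, performing all path inversions, the arcs of $H$ end up agreeing with $\vec{G}_2$, while arcs of $F$ may have been flipped an arbitrary number of times. Afterwards, each edge $f\in F$ on which the current orientation disagrees with $\vec{G}_2$ is fixed by inverting its two endpoints, a $2$-set that touches no other edge. This gives a total of at most
\[
\left\lceil \frac{|E(H)|}{2}\right\rceil + \tau_3(G) = \left\lceil \frac{|E(G)|-\tau_3(G)}{2}\right\rceil + \tau_3(G) = \left\lceil \frac{|E(G)|+\tau_3(G)}{2}\right\rceil
\]
inversions, as desired. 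The final equality is the identity $\lceil (m-t)/2\rceil + t = \lceil (m+t)/2\rceil$ for integers $m,t$.

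The only delicate step is verifying that the inversion of $X_P$ does not disturb other edges of $H$. This relies on the triangle-freeness of $H$, which is why the edges of $F$ are removed first. Connectivity of $H$, needed for Kotzig's lemma, is exactly what Proposition~\ref{prop:G-F-connected} supplies. I do not expect any further obstacle.
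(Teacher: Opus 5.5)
Your proof is correct and follows essentially the same route as the paper: take a minimum triangle-transversal $F$, use Proposition~\ref{prop:G-F-connected} and Kotzig's Lemma~\ref{lem:decomp-P3} to split the connected graph $G\setminus F$ into $\lceil |E(G\setminus F)|/2\rceil$ paths of order at most $3$, fix each path with one $(\leq 3)$-inversion, and then reverse each remaining disagreeing edge of $F$ with a $2$-set. You also make explicit a point the paper leaves implicit: because $G\setminus F$ is triangle-free, each path's inversion reverses no edge of $G\setminus F$ outside that path, and can only additionally flip an edge of $F$, which is repaired at the end.
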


\begin{proof}
Let $G$ be a graph and let $F$ be a minimum triangle-transversal in $G$.

Let  $\vec{G_1}$ and $\vec{G_2}$ be two orientations of $G$. Let $\vec{H_1}$ and $\vec{H_2}$ be the orientations of $H=G\setminus F$ which are restrictions of $\vec{G_1}$ and $\vec{G_2}$ respectively.

Set $t =\lceil |E(H)|/2\rceil$.  By Proposition~\ref{prop:G-F-connected}, $H$ is connected, so by Lemma~\ref{lem:decomp-P3}, it can be edge-decomposed into $t$ paths $P_1, \dots, P_t$ of order at most $3$. 
For $i\in [t]$, there is a set $X_i\subset V(P_i)$ whose inversion transforms $\vec{H_1}\langle V(P_i) \rangle$ and $\vec{H_2}\langle
V(P_i) \rangle$.
Then, inverting $(X_i)_{i\in [t]}$, transforms $\vec{H_1}$ and $\vec{H_2}$.
Thus $\Inv(\vec{G_1}; (X_i)_{i\in [t]})$ and $\vec{G_2}$
disagree only on edges of $F$.
Each disagreeing edge can be reversed by inverting the set of its end-vertices.
Thus $\vec{G_1}$ can be transformed into $\vec{G_2}$
by inverting at most $t+|F|$ $(\leq 3)$-sets.

Therefore $\id^{\leq 3}(G) \leq t+|F| = \lceil |E(H)|/2\rceil + |F|= \left\lceil \frac{|E(G)|-|F|}{2}\right\rceil + |F| = \left\lceil \frac{|E(G)| + \tau_3(G) }{2} \right\rceil$. 
\end{proof}

The fact that $\tau_3(G)\leq \lfloor |E(G)|/2\rfloor$ and Lemma~\ref{lem:E+tau} immediately imply Theorem~\ref{thm:connected}.

\medskip

The {\bf triangle-edge-packing number} of a graph $G$, denoted by $\nu_3(G)$, is the maximum number of edge-disjoint triangles in $G$. 
Note that $\nu_3(G)\leq \tau_3(G)$ and
$\nu_3(G)\leq |E(G)|/3$.

\begin{lemma}\label{lem:P+nu}
    Let $G$ be a graph  and $\mathcal{P}$ be a decomposition of $E(G)$ into paths of order at most $3$. Then $\id^{\leq 3}(G)) \leq |\mathcal{P}| + \nu_3(G)$.
\end{lemma}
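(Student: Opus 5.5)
The plan is to build the $(\leq 3)$-family path by path, using one inversion per member of $\mathcal{P}$ and paying at most one extra inversion per triangle in an edge-disjoint family of triangles.

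Fix two orientations $\vec{G}_1,\vec{G}_2$ and write $\mathcal{P}=\{P_1,\dots,P_t\}$. For a path $P_i$ of order $2$, the obvious choice is to invert its two endvertices if its edge disagrees and do nothing otherwise. For a path $P_i=u_iv_iw_i$ of order $3$ with exactly one disagreeing edge, we invert the endvertices of that edge; this touches no other edge of $G$. The only delicate case is when both $u_iv_i$ and $v_iw_i$ disagree in the current orientation. Then we want to invert $\{u_i,v_i,w_i\}$, which fixes both edges but also reverses $u_iw_i$ if $u_iw_i\in E(G)$. Call such a path \emph{bad} and let $T_i$ be the triangle $u_iv_iw_i$.

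A side-effect reversal of $u_iw_i$ is harmless if the path $P_j\ni u_iw_i$ has not yet been processed, since $P_j$ is then treated with respect to the current orientation. It is also harmless (indeed helpful) if $u_iw_i$ currently disagrees. So the plan is to choose the processing order and the treatment of bad paths so that the number of \emph{harmful} bad paths is at most $\nu_3(G)$. A harmful bad path is one whose closing edge lies in an already-processed path and currently agrees. Each harmful bad path can be repaired with one extra inversion: either invert $\{u_i,w_i\}$ afterwards, or, equivalently, spend two inversions $\{u_i,v_i\},\{v_i,w_i\}$ instead of one. This gives the total $|\mathcal{P}|+(\#\text{harmful})$.

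To bound the harmful ones I would proceed greedily. Process paths in an order in which, whenever possible, the next path chosen is one that is not the closing edge's owner for a pending bad path. Each time a harmful situation is forced, charge it to the triangle $T_i$. The claim to prove is that the triangles charged are pairwise edge-disjoint, which gives at most $\nu_3(G)$ charges. The natural argument: once $T_i$ is charged, all three of its edges are in processed paths and now agree with $\vec{G}_2$. So a later charged triangle $T_j$ sharing an edge with $T_i$ would need that edge to be one of the two path edges of $P_j$, which must still be unprocessed, or to be its closing edge. The former is impossible because the edge has already been processed. In the latter case we would need to show the reversal is avoidable, for instance by reordering so that $P_j$ is handled before $P_i$.

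I expect this edge-disjointness/ordering step to be the main obstacle. If the direct greedy fails, I would fall back on a more global choice. Consider the auxiliary digraph on $\mathcal{P}$ with an arc $P_i\to P_j$ when the closing edge of the bad path $P_i$ lies in $P_j$. Take a maximal edge-disjoint subfamily of the triangles $T_i$ that form directed cycles there. Handling one designated bad path per such triangle with two inversions leaves a digraph that should be acyclic. Processing in a topological order then makes every remaining side effect harmless.
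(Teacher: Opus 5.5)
Your accounting is sound: one inversion per path, one extra per harmful path, with immediate repair so that processed edges always agree with $\vec{G}_2$. But the step that carries the whole lemma is not proved in either of your two lines, namely that the number of harmful paths can be made at most $\nu_3(G)$.

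For an arbitrary order the charging claim is false. Take edges $ab$, $ac_i$, $c_ib$ ($i\in[m]$), decomposed into $\{ab\}$ and the paths $ac_ib$, so $\nu_3(G)=1$. If $\{ab\}$ is processed first and $\vec{G}_2$ is the converse of $\vec{G}_1$, then every $ac_ib$ is harmful and all charged triangles share $ab$. You spend $2m+1>m+2=|\mathcal{P}|+\nu_3(G)$ inversions for $m\geq 2$. So everything hinges on the order, and you do not show that your greedy rule produces a good one. Moreover ``pending bad path'' is a moving target: a side effect can turn a path with one disagreeing edge into a bad one after the owner of its closing edge has already been processed. Your fallback has the same defect. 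Its arcs are defined only for \emph{bad} paths, which depends on the order you are trying to choose. And the two facts it needs (the chosen triangles are edge-disjoint, and discarding them leaves an acyclic digraph) are only asserted with ``should''.

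The missing idea is short. Put an arc $P_i\to P_j$ for \emph{every} path $P_i$ whose vertex set induces a triangle, where $P_j$ is the path containing its third edge. Each path has at most one closing edge, so this digraph $D$ has out-degree at most $1$, and its directed cycles are pairwise vertex-disjoint. For an arc $P_i\to P_j$ we have $E(T_i)\subseteq E(P_i)\cup E(P_j)$, and the paths are edge-disjoint, so triangles taken from different directed cycles are edge-disjoint. Hence $D$ has at most $\nu_3(G)$ directed cycles. Now designate one path on each cycle, and always handle it with the two $2$-set inversions. Delete the out-arcs of the designated paths; the result is acyclic. Process in a topological order of it: every $3$-set inversion then flips only an edge of a not-yet-processed path. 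The total is at most $|\mathcal{P}|$ plus the number of cycles of $D$, which is even slightly sharper than the lemma.

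The paper avoids ordering altogether by induction. It deletes all of $E(G\langle V(P)\rangle)$, including the closing edge, which is taken from the path owning it. It solves the smaller instance with $|\mathcal{P}'|\leq|\mathcal{P}|-1$ and repairs $V(P)$ last with one inversion. It uses two when $G\langle V(P)\rangle$ is a triangle, in which case $\nu_3(G)\geq\nu_3(G')+1$ because that triangle is edge-disjoint from $G'$. Since the last-processed path owns its closing edge, no repair is ever needed and any order works.
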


\begin{proof}We prove the result by induction on $k = |\mathcal{P}|$, the result holding clearly when  $k = 1$.

Assume $k > 1$. Let $\vec{G_1}$ and $\vec{G_2}$ be two orientations of $G$. Let $P$ be a path in $\mathcal{P}$ and let $G'$ be the subgraph obtained from $G$ by deleting $E(G\langle V(P)\rangle)$.
Let $\mathcal{P'}$ be the restriction of $\mathcal{P}$ in $G'$.
Let $\vec{G'_1}, \vec{G'_2}$ be the restrictions of $\vec{G_1}$ and $\vec{G_2}$ to $G'$, respectively. By the induction hypothesis, there is a family $\mathcal{X}$ of at most $k - 1 + \nu_3(G')$ $(\leq 3)$-sets such that $\Inv(\vec{G'_1};\mathcal{X}) =\vec{G'_2}$. Thus $\vec{G_3}=\Inv(\vec{G_1};\mathcal{X})$ and $\vec{G_2}$ disagree only on edges in $E(G\langle V(P)\rangle)$.

Suppose first that $G\langle V(P)\rangle$ is not a triangle. Then, we can transform $\vec{G_3}\langle V(P)\rangle$ into $\vec{G_2}\langle V(P)\rangle$ by inverting at most one subset of $V(P)$. Thus, there is family of at most $|\mathcal{X}| + 1 = k - \ell + \nu_3(G') + 1 \leq k + \nu_3(G)$ $(\leq 3)$-sets whose inversions transform $\vec{G_1}$ into $\vec{G_2}$ and the result holds.

Assume now that $G\langle V(P_1)\rangle$ is a triangle. Then $\nu_3(G) \geq \nu_3(G') + 1$.
Moreover,  we can transform $\vec{G_3}\langle V(P)\rangle$ into $\vec{G_2}\langle V(P)\rangle$ by inverting at most two subsets of $V(P)$.  Thus, there is family of at most $|\mathcal{X}| + 2$ $(\leq 3)$-sets whose inversions transform $\vec{G_1}$ into $\vec{G_2}$. But  $|\mathcal{X}| + 2 = k - 1 + \nu_3(G') + 2 \leq k + \nu_3(G)$, so the result holds.
\end{proof}

Lemma~\ref{lem:decomp-P3} and~\ref{lem:P+nu} immediately imply the following.

\begin{corollary}\label{cor:E+nu}
  If $G$ is a connected graph, then $\id^{\leq 3}(G)) \leq \left\lceil \frac{|E(G)|}{2} \right\rceil + \nu_3(G)$.
\end{corollary}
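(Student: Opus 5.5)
The plan is to obtain Corollary~\ref{cor:E+nu} by combining Lemma~\ref{lem:decomp-P3} with Lemma~\ref{lem:P+nu}.

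\textbf{Step 1.} Since $G$ is connected, Lemma~\ref{lem:decomp-P3} provides an edge-decomposition $\mathcal{P}$ of $E(G)$ into exactly $\left\lceil \frac{|E(G)|}{2} \right\rceil$ paths of order at most $3$.

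\textbf{Step 2.} Apply Lemma~\ref{lem:P+nu} to this decomposition. It gives
\[
\id^{\leq 3}(G) \leq |\mathcal{P}| + \nu_3(G) = \left\lceil \frac{|E(G)|}{2} \right\rceil + \nu_3(G),
\]
which is exactly the claimed bound.

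\textbf{Step 3 (degenerate case).} Lemma~\ref{lem:P+nu} is proved by induction starting at $|\mathcal{P}|=1$, so it does not literally cover an empty decomposition. If $G$ is a single vertex, then $|E(G)|=0$. In that case $\id^{\leq 3}(G)=0$, and the inequality holds trivially.

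\textbf{Expected difficulty.} The only point needing attention is that Lemma~\ref{lem:P+nu} applies to an arbitrary graph together with any decomposition into paths of order at most $3$. So no hypothesis beyond the existence of the decomposition is needed, and connectivity is used solely through Kotzig's lemma.

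One detail inside the induction step of Lemma~\ref{lem:P+nu} should be checked. There, the induced subgraph on $V(P)$ may contain edges of other paths; these are removed in $G'$, and $\mathcal{P}'$ is the restriction of $\mathcal{P}$. That restriction may have fewer than $|\mathcal{P}|-1$ paths, but this only helps the count. This is already handled by the lemma as stated, so the corollary itself presents no real obstacle.
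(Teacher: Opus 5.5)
Your proposal is correct and matches the paper, which derives this corollary immediately by feeding the decomposition from Lemma~\ref{lem:decomp-P3} into Lemma~\ref{lem:P+nu}. Your separate treatment of the edgeless case (where the lemma's induction, which starts at $|\mathcal{P}|=1$, does not literally apply) is a harmless extra bit of care.
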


\section{Trees and forests}\label{sec:trees}
%%%%%%%%%%%%%%%%%%%%%%%%%%%%%%%%%%%%%%%%%%%%%%%%%%

The aim of this section is to prove Theorem~\ref{thm:trees}.
Recall that $\mathcal{F}$ denotes the class of the forests.
We shall need the following lemma.

\begin{lemma}\label{lem:t'andt}
    If there exist two constants $\alpha$ and $\beta$ such that $\conv^{\leq p}_{\cal F}(n) \leq \alpha n + \beta$ for all nonnegative integer $n$, then
    $\id^{\leq p}_{\cal F}(n) \leq \alpha n + 2\beta$ for all nonnegative integer $n$.    
\end{lemma}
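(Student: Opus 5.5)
Given two orientations $\vec{F}_1,\vec{F}_2$ of a forest $F$ of order $n$, I will route the transformation through a canonical intermediate orientation: first reach an auxiliary orientation $\vec{F}_0$, then go from $\vec{F}_0$ to $\vec{F}_2$. Each leg should be an instance of ``reverse all arcs of some sub-forest'', which is exactly what the converse number controls, with the sub-forest chosen to have about $n/2$ vertices. The heuristic is that converting all of $F$ costs about $\alpha n$, while $\vec{F}_1$ and $\vec{F}_2$ disagree on a set $E_{\neq}$ whose size we do not control; splitting the work into two legs of size about $n/2$ each keeps the linear term at $\alpha n$ and pays $\beta$ twice.

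First, an observation. If $H$ is any subgraph of $F$ (a forest), then reversing exactly the edges of $E(H)$ in an orientation of $F$ costs at most $\conv^{\leq p}(H)\leq \conv^{\leq p}_{\cal F}(|V(H)|)$ inversions. Here I take only the non-isolated vertices of $H$, and use a family of subsets of $V(H)$ that transforms $\vec{H}$ into its converse. This family must not disturb the other edges of $F$ inside $V(H)$. That is automatic if $H$ is an induced subforest of $F$; otherwise the extra edges of $F[V(H)]$ cause trouble. So the sub-forests I reverse should be induced, or I should argue via Proposition~\ref{prop:H+I}-style bookkeeping. Because $F$ is a forest, induced sub-forests are natural: take $H=F[S]$ for a vertex set $S$.

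Second, the construction. Properly $2$-colour $F$ into colour classes $A$ and $B$; since every edge of $F$ joins $A$ to $B$, no vertex subset containing only one endpoint of an edge can reverse that edge. I define $\vec{F}_0$ by choosing a vertex set $S$ with $|S|\leq n/2$ roughly, in such a way that the edges on which $\vec{F}_1$ and $\vec{F}_0$ disagree are exactly $E(F[S])$, and the edges on which $\vec{F}_0$ and $\vec{F}_2$ disagree are exactly $E(F[V\setminus S])$. This is possible only if every edge of $E_{\neq}$ lies inside $S$ or inside $V\setminus S$, and every edge of $E_=$ is either inside both reversals or inside neither. Since the two sets are disjoint, the correct formulation instead is to choose $\vec{F}_0$ freely. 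Decompose $V=S\cup T$ with $|S|,|T|\approx n/2$, chosen so that every edge of $F$ lies in $F[S]$ or $F[T]$. Allowing overlap, I can take $S,T$ to share a small separator; for a forest, a single edge deletion or vertex split balances sizes. On edges inside $S$ only, set $\vec{F}_0=\vec{F}_2$; then the first leg reverses $E_{\neq}\cap E(F[S])$ using a sub-forest of $F[S]$, which is again a forest on at most $|S|$ vertices. The second leg handles the edges inside $T$ similarly. Each leg costs at most $\alpha|S|+\beta$, respectively $\alpha|T|+\beta$, giving a total of $\alpha n+2\beta$ when $|S|+|T|=n$.

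The main obstacle is the overlap and induced-ness issue: the reversal in a leg must touch no edges outside its own part. I would first try the cleanest route. Reversing a sub-forest $H'=(V,E_{\neq}\cap E(F[S]))$ via its converse family only affects pairs inside components of $H'$, and components of a sub-forest of a forest are induced in $F$ exactly when no chord exists. No chord can exist, since any such chord would close a cycle. Hence each component's converse family can be applied independently, and $\conv$ is additive over components. This suggests the simpler proof: apply the hypothesis directly to $G_{\neq}$, whose components are induced. The factor $2\beta$ would then come from summing over components, each of which can absorb a constant; to handle this I would group the components into one forest and use $\conv^{\leq p}_{\cal F}$ of the whole, with the second $\beta$ spent correcting merged components.
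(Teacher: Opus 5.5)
There is a genuine gap: none of your routes yields only $2\beta$ of additive overhead.

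\textbf{The balanced split.} The split $V=S\cup T$ fails as stated. In general $F\langle S\rangle$ contains edges of $E_=$, so the forest $(S,E_{\neq}\cap E(F\langle S\rangle))$ is not induced in $F$. A family witnessing its converse number may then contain both ends of an agreeing edge and flip it. Besides, overlapping $S,T$ give $|S|+|T|>n$. Balancing is also irrelevant, since $\alpha|S|+\alpha|T|$ does not depend on it.

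\textbf{The component-by-component route.} Your later observation is correct: every component $C$ of $G_{\neq}$ is an induced subtree of $F$, so a converse family for $C$ reverses only edges of $C$. But applying the hypothesis component by component only gives the bound $\sum_C(\alpha|C|+\beta)$. This equals $\alpha n+k\beta$ when $E_{\neq}$ is a perfect matching of a path on $2k$ vertices, and $k$ is unbounded.

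\textbf{The proposed repair.} You suggest merging all components into one forest and spending ``the second $\beta$ correcting merged components''. There is no mechanism behind this. The hypothesis is a black box, and a converse family for the whole of $G_{\neq}$ may contain sets meeting two components joined by an edge of $E_=$. That can flip an unbounded number of agreeing edges, which no constant number of extra inversions can repair.

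\textbf{The missing idea.} You need to group the components into exactly two classes so that no edge of $F$ joins two components of the same class. The paper contracts each component of $(V(F),E_{\neq})$. The resulting graph $H_{\neq}$ is a minor of $F$, hence a forest, hence bipartite. Lifting a bipartition gives a partition $(A_1,A_2)$ of $V(F)$ such that an edge of $F$ has both ends in the same $A_j$ if and only if it lies in $E_{\neq}$.

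Equivalently, $2$-colour $F$ so that an edge is monochromatic exactly when it is in $E_{\neq}$; this is possible in any forest. Your proper $2$-colouring of $F$ imposes the wrong condition, since it makes every edge bichromatic.

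With this partition, each $F\langle A_j\rangle$ is an induced subforest all of whose edges must be reversed. Inverting subsets of $A_j$ touches no other edge of $F$. Applying the hypothesis once to each part therefore gives at most $\alpha|A_1|+\beta+\alpha|A_2|+\beta=\alpha n+2\beta$ inversions.
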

\begin{proof}
    Let $F$ be a forest of order $n$, and let $\vec{F_1}$ and $\vec{F_2}$ be two orientations of $F$. 
    Consider the graph $H_{\neq}$ whose vertices are the connected components of $(V(F),E_{\neq})$ ,
    and in which two such connected components $C$ and $C'$ are adjacent if and only if
    there is an edge $uv$ in $E(F)$ with $u \in C$ and $v \in C'$.
    Observe that $H_{\neq}$ is a minor of $F$, and so is a forest. In particular,
    $H_{\neq}$ is bipartite. Let $(A'_1, A'_2)$ be a bipartition of $H_{\neq}$ and let $(A_1,A_2)$ the partition of $V(F)$ where a vertex is contained in $A_j$ if it is contained in a component of $H_{\neq}$ that belongs to $A_j'$ for $j\in [2]$.
    For every edge $uv$ in $E(F)$, we have $uv \in E_{\neq}$
    if and only if $\{u,v\} \subseteq A_1$ or $\{u,v\} \subseteq A_2$.
    For $j\in [2]$, let $F_j=F\langle A_j\rangle$, and let $n_j=|A_j|$.
    Since $\conv^{\leq p}_{\cal F}(n_j)\leq \alpha n_j + \beta$, there is a family $(X_i)_{i\in I_j}$ of at most  $\alpha n_j + \beta$ subsets of $A_j$ of size at most $p$ whose inversion reverses all edges of $F_j$.
    Observe that by the construction of $A_1$ and $A_2$, the inversion of an $X_i\subseteq A_j$ in $F$ only reverses edges with both end-vertices in $X_j$. 
    Thus $(X_i)_{i\in I_1\cup I_2}$ is family of at most $\alpha n_1 + \beta + \alpha n_2 + \beta = \alpha n + 2\beta$ sets 
    that transforms $\vec{F_1}$ into $\vec{F_2}$.
    Thus $\id^{\leq p}(F)\leq \alpha n + 2\beta$.
\end{proof}

\subsection{Case \texorpdfstring{$p=3$}{p=3}}
%%%%%%%%%%%%%%%%%%%%%%%%
The aim of this subsection is to prove assertion (i) of Theorem~\ref{thm:trees} which we restate.
\begin{theorem}\label{thm:tree2}
Let $T$ be a tree of order $n$.
Then $\conv^{\leq 3}(T)=\id^{\leq 3}(T) = \left\lceil \frac{n-1}{2}\right\rceil$.
\end{theorem}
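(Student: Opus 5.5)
Upper bound: a tree $T$ of order $n$ is connected with $|E(T)|=n-1$ edges. By Lemma~\ref{lem:decomp-P3} (Kotzig), $E(T)$ decomposes into $\lceil (n-1)/2\rceil$ paths of order at most $3$. A tree has no triangle, so $\nu_3(T)=0$, and Lemma~\ref{lem:P+nu} (or Corollary~\ref{cor:E+nu}) gives $\id^{\leq 3}(T)\leq \lceil (n-1)/2\rceil$. The same follows from Lemma~\ref{lem:E+tau} since $\tau_3(T)=0$. Since $\conv^{\leq 3}(T)\leq \id^{\leq 3}(T)$ always, it remains only to prove $\conv^{\leq 3}(T)\geq \lceil (n-1)/2\rceil$.

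Lower bound: let $\mathcal{X}$ be a $(\leq 3)$-family whose inversion reverses every edge of $T$. Each $X\in\mathcal{X}$ induces in $T$ a forest on at most $3$ vertices, so it contains at most $2$ edges. An edge is reversed iff it lies in an odd number of the sets $T\langle X\rangle$, $X\in\mathcal{X}$. Hence the multiset union of the edge sets $E(T\langle X\rangle)$ covers every edge of $T$ at least once. This gives $n-1\leq \sum_{X}|E(T\langle X\rangle)|\leq 2|\mathcal{X}|$, so $|\mathcal{X}|\geq \lceil (n-1)/2\rceil$.

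Combining the two bounds yields $\conv^{\leq 3}(T)=\id^{\leq 3}(T)=\lceil (n-1)/2\rceil$. There is no real obstacle. The key observation is that acyclicity prevents a $3$-set from inducing a triangle, which gives both the $\nu_3=0$ upper bound and the two-edges-per-inversion counting for the lower bound.
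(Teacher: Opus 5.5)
Your proposal is correct and follows essentially the same route as the paper. The lower bound counts that each $(\leq 3)$-set induces at most two edges of a tree. The upper bound applies Lemma~\ref{lem:E+tau} or Corollary~\ref{cor:E+nu} with $\tau_3(T)=\nu_3(T)=0$.
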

\begin{proof}
Let $\vec{T}$ an orientation of $T$ and $\cev{T}$ its converse.
Then $|E_{\neq}| =|E(T)|=n-1$. 
Observe that a $(\leq 3)$-inversion reverses at most two edges of $T$, so $\vec{T}$ and $\cev{T}$ are at distance at least $\lceil (n-1)/2 \rceil$ in $\mathcal{I}^{\leq 3}(T)$.
Hence $\id^{\leq 3}(T) \geq \conv^{\leq 3}(T) \geq \left\lceil \frac{n-1}{2}\right\rceil$.

Now the tree $T$ has no triangle, so $\tau_3(T) = \nu_3(T) = 0$. Hence, by Lemma~\ref{lem:E+tau} or Corollary~\ref{cor:E+nu},
$\id^{\leq 3}(T) \leq \left\lceil \frac{|E(T)|}{2}\right\rceil  = \left\lceil \frac{n-1}{2}\right\rceil$
\end{proof}

\subsection{Case \texorpdfstring{$p=4$}{p=4}}
%%%%%%%%%%%%%%%%%%%%%%%%

The aim of this subsection is to prove the assertion (ii) of Theorem~\ref{thm:trees} which states  $\id^{\leq 4}_{\cal F}(n), \conv^{\leq 4}_{\cal F}(n)=\frac{3}{8}n + \Theta(1)$. The upper bound is given is Theorem~\ref{thm:tree4} and the lower bound in Proposition~\ref{prop:tree4-tight}.
In order to prove them, we need some preliminaries.

\begin{comment}
\medskip

\begin{lemma}\label{lem:indepenedent-pairs}
    Let $M$ be a matching in a tree $T$. Then $T$ contains at least 
    $\left\lceil \frac{|M|-2}{2}\right\rceil$ 
    disjoint pairs of independent edges.
\end{lemma}
\begin{proof}
    It follows directly from the fact that among three pairwise non-adjacent edges in a tree, there is a pair of independent edges.
\end{proof}
\end{comment}

\begin{lemma}\label{lem:decomp-tree4}
Every tree of order $n$ can be decomposed in $\lceil 3(n-1)/8 \rceil$ edge-disjoint induced subgraphs of order at most~$4$.
\end{lemma}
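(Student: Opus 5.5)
We argue by induction on $n$, proving the statement for forests, with $m$ edges and $\lceil 3m/8\rceil$ pieces; isolated vertices are harmless. Here a \emph{piece} is the edge set $E(T\langle X\rangle)$ of a set $X$ with $|X|\leq 4$, and we need a family of such sets whose edge sets partition $E(T)$. In a tree a piece has at most $3$ edges: a star $K_{1,3}$, a path $P_4$, or fewer. The target ratio $3/8$ means that, on average, three pieces must cover eight edges, for instance $3+3+2$ where the $2$-edge piece is an induced $2K_2$.

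The basic reduction step is as follows. We find a set $S$ of vertices such that $T-S$ is still a forest, together with $j$ sets $X_1,\dots,X_j$, each of size at most $4$, whose induced edge sets partition the $k$ edges incident to $S$, with $j\leq 3k/8$. Any set $X\subseteq V(T-S)$ induces the same edges in $T$ as in $T-S$. Hence the induction hypothesis applied to $T-S$, together with $X_1,\dots,X_j$, gives a decomposition of $T$ into at most
\[
\left\lceil \tfrac{3(n-1-k)}{8}\right\rceil + j \leq \left\lceil \tfrac{3(n-1)}{8}\right\rceil
\]
pieces. For the $X_i$ themselves we must check inducedness directly in $T$, which is automatic when all but at most one vertex of $X_i$ lie in $S$. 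Two kinds of step are enough: a \emph{$(3,1)$-step} removing $3$ edges with one piece ($3/8\cdot 3\geq 1$), and an \emph{$(8,3)$-step} removing $8$ edges with three pieces.

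To find such a step, root $T$ at an end of a longest path and look at a deepest leaf $w$, its parent $v$ and its grandparent $u$. We then split into cases.

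If $v$ has at least $3$ leaf children, three of them with $v$ form a pendant $K_{1,3}$ and give a $(3,1)$-step.

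If $v$ has exactly two leaf children, take $S=\{v\}\cup$ its children and $X=S\cup\{u\}$, which induces a $K_{1,3}$: a $(3,1)$-step.

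If every deepest parent has exactly one leaf child, look at $u$. If $u$ has degree $2$, then $\{\mathrm{parent}(u),u,v,w\}$ induces a pendant $P_4$: a $(3,1)$-step. Otherwise $u$ has several children, each either a leaf or a pendant $P_2$. Two pendant $P_2$'s $v_1w_1$ and $v_2w_2$ give the $P_4$ $w_1v_1uv_2$ plus one leftover edge $v_2w_2$. Leaf children of $u$ combine with a pendant $P_2$ into a $K_{1,3}$ or $P_4$ at $u$. Leftover single pendant edges are paired as induced $2K_2$'s, either with the next leftover edge at another high-depth vertex or with the edge from $u$ to its parent when these are non-adjacent and induce no further edge.

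The base of the induction consists of forests with few edges, say at most $8$, which are checked by hand.

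The main obstacle is this last configuration: all deepest branches are pendant paths of length $2$, and an odd number of them hang at each branching vertex. Then single leftover edges appear and must be bundled into induced $2K_2$ pieces while the count stays within the $(8,3)$ budget. I would first try to handle two such branching vertices $u,u'$ at maximum depth together. Using their pendant paths, I would aim for two $P_4$'s plus one $2K_2$ formed by the two leftover pendant edges, which lie in different branches and hence are non-adjacent with no edge between them. When only one such $u$ exists, I would instead go up to $\mathrm{parent}(u)$ and absorb the edge $u\,\mathrm{parent}(u)$ into a $K_{1,3}$ or $P_4$ centred at $u$. The detailed case analysis ensuring that every configuration yields a $(3,1)$- or $(8,3)$-step is where most of the work lies.
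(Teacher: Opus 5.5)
Your framework is the same as the paper's: induction along a longest path, with $(3,1)$-steps and $(8,3)$-steps in which two $P_4$/$K_{1,3}$ pieces are completed by an induced $2K_2$ made of two leftover pendant edges. However, the proposal has two genuine gaps.

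\textbf{The forest strengthening is false.} The hypothesis for forests with $\lceil 3m/8\rceil$ pieces fails already for five disjoint edges. A set of at most $4$ vertices induces at most $2$ edges of a matching, so $3$ pieces are needed, while $\lceil 15/8\rceil=2$. Your hand-checked base case would therefore fail, and you cannot apply induction to an arbitrary forest $T-S$. You must choose $S$ so that $T-S$ remains a tree up to isolated vertices, as the paper does; deleting $S$ must never split the remaining edges into several components.

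\textbf{The hard configuration is left open, and your sketched remedies do not close it.} Suppose $u$ (the paper's $v_3$) has degree $3$ and two pendant paths $uv_1w_1$, $uv_2w_2$.
\begin{itemize}
\item Absorbing $u\,\mathrm{parent}(u)$ into a $K_{1,3}$ or $P_4$ at $u$ covers $5$ edges with $2$ pieces, and $2>\tfrac38\cdot 5$.
\item Pairing $v_2w_2$ with $u\,\mathrm{parent}(u)$ never gives a $2K_2$, because $uv_2\in E(T)$.
\item A second such branching vertex at the same maximum depth need not exist.
\end{itemize}

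The missing idea is to use the \emph{other end} of the longest path $(v_1,\dots,v_t)$, which is your root.
\begin{itemize}
\item If $d(v_3)=4$, with pendant paths $v_3v_2v_1$, $v_3w_2w_1$, $v_3x_2x_1$: use the pieces $\{v_1,v_2,v_3,w_2\}$ and $\{x_1,x_2,v_3,v_4\}$. Pair the leftover edge $w_1w_2$ with the pendant edge $v_{t-1}v_t$, and delete $\{v_1,w_1,x_1,v_2,w_2,x_2,v_3,v_t\}$.
\item If $d(v_3)=3$: run the whole case analysis from the $v_t$ end as well. Either one of the previous steps applies there, or symmetrically $d(v_{t-2})=3$ with pendant paths $v_{t-2}v_{t-1}v_t$ and $v_{t-2}w_{t-1}w_t$. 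In the latter case, $\{v_1,v_2,v_3,w_2\}$, $\{v_t,v_{t-1},v_{t-2},w_{t-1}\}$ and $\{w_1,w_2,w_{t-1},w_t\}$ form an $(8,3)$-step that keeps $T-S$ a tree. The case $t=5$ is checked directly.
\item The case $d(v_3)\ge 5$ is purely local: two $P_4$'s through $v_3$, plus a $2K_2$ on two of the remaining pendant edges.
\end{itemize}
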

\begin{proof}
By induction on $n$, the result holding trivially when $n\leq 4$.

Let $T$ be a tree of order $n\geq 4$.
Let $(v_1, \dots, v_t)$ be a longest path in $T$.

If $d(v_2) \geq 4$, then let $w_1, w_2, w_3$ be three neighbours of $T$ distinct from $v_3$.
By the maximality of $(v_1, \dots, v_t)$, $T-\{w_1, w_2, w_3\}$ is connected.
Therefore, by the induction hypothesis, 
$T-\{w_1, w_2, w_3\}$ has a decomposition in $\lceil 3(n-4)/8 \rceil \leq \lceil 3(n-1)/8 \rceil - 1$ edge-disjoint induced subgraphs of order at most $4$,
which together with $T\langle \{w_1, w_2, w_3, v_2\}\rangle$ yields the result.

If $d(v_2) = 3$, then let $v_1, w_2, v_3$ be the three neighbours of $T$.
By the induction hypothesis,
$T-\{v_1, v_2, w_2\}$ has a decomposition in $\lceil 3(n-4)/8 \rceil$ edge-disjoint induced subgraphs of order at most $4$, 
which together with $T\langle \{v_1, v_2, w_2, v_3\}\rangle$ yields the result.

Now assume $d(v_2)=2$.

If $d(v_3)=2$, then by the induction hypothesis,
$T-\{v_1, v_2, v_3\}$ has a decomposition in $\lceil 3(n-4)/8 \rceil$ edge-disjoint induced subgraphs of order at most $4$, 
which together with $T\langle \{v_1, v_2, v_3, v_4\}\rangle$ yields the result.
Henceforth, we assume $d(v_3)\geq 3$.
Let $w_2$ be a neighbour of $v_3$ distinct from $v_2$ and $v_4$. 

By a similar reasoning as above, we have the result if $d(w_2) \geq 3$. Henceforth, we may assume $d(w_2) \leq 2$.

If $d(w_2)=1$, then
by the induction hypothesis, $T-\{v_1, v_2, w_2\}$ has a decomposition in $\lceil 3(n-4)/8 \rceil$ edge-disjoint induced subgraphs of order at most $4$, which together with $T\langle \{v_1, v_2, v_3, w_2\}\rangle$ yields the result.

Henceforth, we may assume that all neighbours of $v_3$ distinct from $v_4$ have degree $2$.
In particular, this implies that $t\geq 5$.

Assume $d(v_3)\geq 5$.
Let $v_2, w_2, x_2, y_2$ be neighbours of $v_3$ distinct from $v_4$.
Let $w_1$ (resp. $x_1$, $y_1$) be the neighbour of $w_2$ (resp. $x_2$, $y_2$) distinct from $v_3$.
By the induction hypothesis, $T-\{v_1, w_1, x_1, y_1, v_2, w_2, x_2, y_2\}$ has a decomposition in
$\lceil 3(n-9)/8 \rceil \leq \lceil 3(n-1)/8 \rceil - 3$ edge-disjoint induced subgraphs of order at most $4$,
which together with $T\langle \{v_1, v_2, v_3, w_2\}\rangle$, $T\langle \{x_1, x_2, v_3, y_2\}\rangle$, and  $T\langle \{w_1, w_2, y_1, y_2\}\rangle$ yields the result. 

Assume now that $d(v_3) = 4$.
Let $v_2, w_2, x_2$ be neighbours of $v_3$ distinct from $v_4$. Let $w_1$ (resp. $x_1$) be the neighbour of $w_2$ (resp. $x_2$) distinct from $v_3$.
By the induction hypothesis,
$T-\{v_1, w_1, x_1, v_2, w_2, x_2, v_3, v_t\}$ has a decomposition in $\lceil 3(n-9)/8 \rceil$ edge-disjoint induced subgraphs of order at most $4$,
which together with $T\langle \{v_1, v_2, v_3, w_2\}\rangle$, $T\langle \{x_1, x_2, v_3, v_4\}\rangle$, 
and $T\langle \{w_1, w_2, v_{t-1}, v_t\}\rangle$ yields the result. 

Henceforth, we assume $d(v_3) = 3$.
Let $w_2$ be its neighbour distinct from $v_2$ and $v_4$, and $w_1$ the neighbour of $w_2$ distinct from $v_3$.
By symmetry, we may assume that $d(v_{t-2})=3$, and the two neighbours of $v_{t-2}$ distinct from $v_{t-3}$, say $v_{t-1}$ and $w_{t-1}$, have degree $2$.
Let $w_t$ be the neighbour of $w_{t-1}$ distinct from $v_{t-2}$.
If $t=5$, then $V(T) = \{v_1,w_1,v_2,w_2,v_3,v_4,v_5\}$ and $E(T) = \{v_1v_2,w_1w_2,v_2v_3,w_2v_3,v_3v_4,v_4v_5\}$,
and so $T$ can be decomposed in $T\langle\{v_1,v_2,v_3,v_4\}\rangle$, $T\langle\{v_3,w_2\}\rangle$, and $T\langle\{w_1,w_2,v_4,v_5\}\rangle$ as wanted.
Now suppose $t \geq 6$.
In particular, $v_1,w_1,v_2,w_2,w_{t-1},w_t$ are distinct.
By the induction hypothesis, $T-\{v_1, w_1, v_2, w_2,v_{t}, w_t, v_{t-1}, w_{t-1}\}$ has a decomposition in
$\lceil 3(n-9)/8 \rceil$ edge-disjoint induced subgraphs of order at most $4$,
which together with $T\langle \{v_1, v_2, v_3, w_2\}\rangle$, $T\langle \{v_t, v_{t-1}, v_{t-2}, w_{t-1}\}\rangle$, 
and $T\langle \{w_1, w_2, w_{t-1}, w_t\}\rangle$ yields the result. 
\end{proof}

\begin{theorem}\label{thm:tree4}
Let $T$ be a tree of order $n$.
Then $\conv^{\leq 4}(T) \leq \left\lceil \frac{3(n-1)}{8}\right\rceil$.
\end{theorem}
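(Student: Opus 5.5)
The plan is to combine Lemma~\ref{lem:decomp-tree4} with the observation that the converse of a tree can be obtained by reversing each piece of a decomposition into edge-disjoint induced subgraphs, one inversion per piece.

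First, fix an orientation $\vec{T}$ of $T$. By Lemma~\ref{lem:decomp-tree4}, $E(T)$ decomposes into $t \leq \lceil 3(n-1)/8\rceil$ edge-disjoint induced subgraphs $T\langle Y_1\rangle, \dots, T\langle Y_t\rangle$, with $|Y_i|\leq 4$ for all $i$.

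Next, take $\mathcal{X}=(Y_i)_{i\in[t]}$ as the family of inversions, which is a $(\leq 4)$-family. Recall that $\Inv(\vec{T};\mathcal{X})$ reverses exactly those arcs $uv$ for which an odd number of the $Y_i$ contain both $u$ and $v$. Since each $T\langle Y_i\rangle$ is induced, $\{u,v\}\subseteq Y_i$ holds if and only if $uv\in E(T\langle Y_i\rangle)$. Since the pieces are edge-disjoint and cover $E(T)$, every edge lies in exactly one piece, so every arc is reversed exactly once.

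Finally, this shows $\Inv(\vec{T};\mathcal{X})=\cev{T}$, and hence $\conv^{\leq 4}(T)\leq t\leq \lceil 3(n-1)/8\rceil$.

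No step is a real obstacle, since all the difficulty sits in Lemma~\ref{lem:decomp-tree4}. The one point to check is that the pieces are genuinely induced subgraphs: an inversion of $Y_i$ flips every edge of $T$ inside $Y_i$, so if a piece were not induced, some edge would be flipped by more than one inversion. The lemma gives inducedness explicitly, which settles it.
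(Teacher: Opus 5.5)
Your proof is correct and is the same as the paper's. Both apply Lemma~\ref{lem:decomp-tree4} and invert the vertex set of each piece once; edge-disjointness together with inducedness ensures that every edge of $T$ lies in exactly one inverted set and is therefore reversed exactly once.
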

\begin{proof}
Let $\vec{T}$ an orientation of $T$ and $\cev{T}$ its converse.
Set $s= \left\lceil \frac{3(n-1)}{8}\right\rceil$.
By Lemma~\ref{lem:decomp-tree4}, $T$ can be decomposed into $s$ induced subgraphs $S_1, \dots, S_s$ of order at most $4$.
Since the $S_i$ are edge-disjoint and induced, inverting some $V(S_{i_0})$ does not reverse any edge of the $S_i$ with $i\neq i_0$.
Thus inverting all $V(S_i)$ reverse every edge exactly once.
Thus $\Inv(\vec{T} ; (V(S_i))_{i\in [s]}) = \cev{T}$, and so $\conv^{\leq 4}(T) \leq s$.   
\end{proof}

\begin{corollary}
    $\id^{\leq 4}_{\cal F}(n) \leq \frac{3}{8} n + 1$.
\end{corollary}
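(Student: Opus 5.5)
The plan is to combine Theorem~\ref{thm:tree4} with Lemma~\ref{lem:t'andt}. Theorem~\ref{thm:tree4} bounds the converse number of a single tree. Lemma~\ref{lem:t'andt}, however, requires a linear bound on $\conv^{\leq 4}_{\cal F}(n)$ for \emph{forests}, so the first step is to extend the bound from trees to forests.

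Let $F$ be a forest of order $n$ with components $T_1,\dots,T_k$ of orders $n_1,\dots,n_k$. Inverting a subset of $V(T_j)$ reverses only edges of $T_j$. Hence $\conv^{\leq 4}(F)\leq \sum_j \conv^{\leq 4}(T_j)\leq \sum_j \lceil 3(n_j-1)/8\rceil$. This sum has a ceiling loss for every component, so it does not directly give a bound of the form $\frac38 n+\beta$ unless the losses are absorbed. We could absorb them using the $-1$ per component: $\lceil 3(n_j-1)/8\rceil \leq \frac{3(n_j-1)}{8}+\frac58 = \frac{3n_j}{8}+\frac14$. That still gives $\frac{3n}{8}+\frac{k}{4}$, which is too weak.

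A cleaner route is to add edges. Joining the components of $F$ by $k-1$ edges yields a tree $T$ of order $n$ containing $F$ as a spanning subgraph. Lemma~\ref{lem:decomp-tree4} decomposes $T$ into $\lceil 3(n-1)/8\rceil$ edge-disjoint induced subgraphs of order at most $4$. Restricting each part to $F$ still gives sets whose inversions, inside $F$, reverse exactly the edges of $F$ lying in that part. The reason is that the parts are induced in $T$, so any edge of $F$ between two vertices of a part is an edge of $T$ inside that part. Every edge of $F$ is an edge of $T$, hence lies in exactly one part, and so the inversions reverse every edge of $F$ exactly once. Thus $\conv^{\leq 4}(F)\leq \lceil 3(n-1)/8\rceil \leq \frac{3n}{8}+\frac{1}{2}$, using $\lceil x\rceil \leq x+\frac78$ for $x\in\frac18\mathbb{Z}$, which gives $\frac{3n-3}{8}+\frac78=\frac{3n}{8}+\frac12$. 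This gives $\conv^{\leq 4}_{\cal F}(n)\leq \frac38 n+\frac12$ for all $n\geq1$, and trivially for $n=0$.

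Applying Lemma~\ref{lem:t'andt} with $\alpha=\frac38$ and $\beta=\frac12$ yields $\id^{\leq 4}_{\cal F}(n)\leq \frac38 n+1$. The only delicate point is the passage from trees to forests with a uniform additive constant. The spanning-tree trick handles it, and the key check there is that the parts stay induced in $F$ after the connecting edges are removed.
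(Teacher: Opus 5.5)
Your proof is correct and follows the paper's route: it establishes $\conv^{\leq 4}_{\cal F}(n)\leq \frac38 n+\frac12$ and then applies Lemma~\ref{lem:t'andt} with $\alpha=\frac38$, $\beta=\frac12$. The paper gets this forest bound by simply citing Theorem~\ref{thm:tree4}, which is stated only for trees, whereas your spanning-tree argument with Lemma~\ref{lem:decomp-tree4} actually justifies the passage to forests (and, as you note, summing over components would not give a uniform additive constant).

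One harmless slip in your discarded aside: $\lceil 3(n_j-1)/8\rceil$ can exceed $\frac{3(n_j-1)}{8}$ by up to $\frac78$ (e.g.\ $n_j=4$), not just $\frac58$. This only makes that route weaker and does not affect your argument.
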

\begin{proof}
For every nonnegative integer $n$, we have $\conv^{\leq 4}_{\cal F}(n) \leq \frac{3}{8} n + \frac{1}{2}$ by Theorem~\ref{thm:tree4}. 
Thus, by Lemma~\ref{lem:t'andt}, $\id^{\leq 4}_{\cal F}(n) \leq \frac{3}{8} n + 1$.
\end{proof}

This theorem is tight up as shown by the following proposition.
\begin{proposition}\label{prop:tree4-tight}
 For every positive integer $n$, there is a tree $T$ of order $n$ such that    $\conv^{\leq 4}(T) \geq \left\lceil \frac{3n-4}{8}\right\rceil$.
\end{proposition}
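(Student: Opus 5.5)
The plan is to take a subdivided star (a \emph{spider} with legs of length $2$) and run a counting argument, which I will phrase as a small linear program. The basis is that every edge must be reversed, so every edge lies in at least one inverted set.

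\textbf{The tree.} Let $k=\lfloor (n-1)/2\rfloor$. Take a centre $c$, vertices $a_1,\dots,a_k$ adjacent to $c$, and leaves $b_i$ adjacent to $a_i$. If $n$ is even, add one more leaf $\ell$ adjacent to $c$. Call the edges $ca_i$ (and $c\ell$) \emph{spokes} and the edges $a_ib_i$ \emph{pendant edges}. There are $k+\varepsilon$ spokes, where $\varepsilon=1$ if $n$ is even and $0$ otherwise, and $k$ pendant edges.

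\textbf{Setting up the count.} Let $\vec T$ be an orientation and let $\mathcal X$ be a $(\leq 4)$-family with $\Inv(\vec T;\mathcal X)=\cev T$. Every edge is reversed an odd number of times, so every edge has both endvertices in at least one member of $\mathcal X$. I classify the members $X\in\mathcal X$ as follows.
\begin{itemize}
\item[(a)] $c\notin X$. Then $T\langle X\rangle$ contains no spoke. Its edges are pendant edges, which are pairwise disjoint, so there are at most $2$ of them.
\item[(b)] $c\in X$ and $X$ contains no pendant edge. Then $T\langle X\rangle$ consists of at most $3$ spokes.
\item[(c)] $c\in X$ and $X$ contains a pendant edge $a_ib_i$. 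Then $X=\{c,a_i,b_i,z\}$ with $|X|\leq 4$, so $T\langle X\rangle$ has exactly $1$ pendant edge and at most $2$ spokes ($ca_i$ and possibly $cz$).
\end{itemize}
Let $y,x_0,x_1$ be the numbers of sets of types (a), (b), (c). Covering the spokes gives $3x_0+2x_1\geq k+\varepsilon$. Covering the pendant edges gives $x_1+2y\geq k$.

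\textbf{Concluding.} Add $\tfrac14$ times the first inequality and $\tfrac12$ times the second. This dual certificate is valid because the coefficients satisfy $3\cdot\tfrac14\leq 1$, $2\cdot\tfrac14+\tfrac12\leq 1$ and $2\cdot\tfrac12\leq 1$. It yields
\[
|\mathcal X| = x_0+x_1+y \;\geq\; \tfrac34x_0+x_1+y \;\geq\; \frac{k+\varepsilon}{4}+\frac{k}{2} = \frac{3k+\varepsilon}{4}.
\]
If $n=2k+1$, this equals $\frac{6k}{8}=\frac{3n-3}{8}\geq\frac{3n-4}{8}$. If $n=2k+2$, it equals $\frac{6k+2}{8}=\frac{3n-4}{8}$. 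Since $|\mathcal X|$ is an integer, we get $|\mathcal X|\geq\left\lceil\frac{3n-4}{8}\right\rceil$. Degenerate small cases ($k=0$, i.e.\ $n\leq 2$) are checked directly.

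\textbf{Main obstacle.} The only delicate step is the classification in (a)--(c), specifically showing that a set which is forced to contain $c$ cannot also realize two pendant edges. This holds because such a set would need $5$ vertices, $c,a_i,b_i,a_j,b_j$. This caps how efficient the $3$-edge inversions can be, and it is exactly what makes the dual weights $\tfrac14,\tfrac12$ give the constant $\tfrac38$.
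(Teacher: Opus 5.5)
Your proof is correct and is essentially the paper's argument. You use the same spider tree, and your dual multipliers $\tfrac14$ on spokes and $\tfrac12$ on pendant edges are exactly the paper's edge weights $1$ and $2$ scaled by $\tfrac14$; your case analysis (a)--(c) simply makes explicit the paper's unproved observation that every $(\leq 4)$-set carries total weight at most $4$.
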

\begin{proof}
Let  $n$ be a positive integer. Set $s=\lfloor (n-1)/2\rfloor$ and $\epsilon = n-1 -2s$.
Let $T$ be the tree with vertex set $\{x\} \cup \{y_i \mid i\in [s+\epsilon]\} \cup \{z_i \mid i\in [s]\}$ and edge set $\{xy_i \mid i\in [s+\epsilon]\} \cup \{y_iz_i \mid i\in [s]\}$.
Let $\vec{T}$ be an orientation of $T$ and $\cev{T}$ its converse.
Then $|E_{\neq}| =|E(T)|$. 

Put a weight of $1$ on each
edge $xy_i$ and a weight of $2$ on each edge $y_iz_i$. See Figure~\ref{fig:tree-lb-p4}).
Then the sum of the weights of the edges is $3s + \epsilon$.
Observe that the sum of the weights of the edges reversed by a $(\leq 4)$-inversion is at most $4$.
Thus $\vec{T}$ and $\cev{T}$ are at distance at least $\lceil (3s +\epsilon)/4) \rceil \geq \lceil \frac{3n-4}{8}\rceil$
in $\mathcal{I}^{\leq 4}(T)$.
Hence $\conv^{\leq 4}(T) \geq \left\lceil \frac{3n-4}{8}\right\rceil$.
\end{proof}

\begin{figure}[hbtp]
    \centering
    \includegraphics{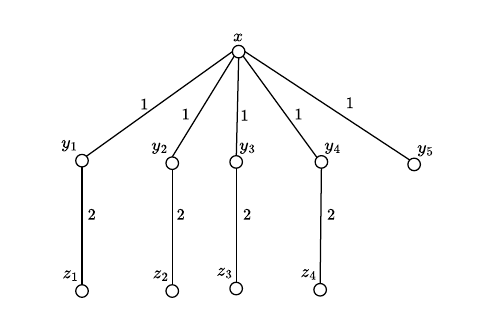}
    \caption{Example of the tree $T$ defined in Proposition~\ref{prop:tree4-tight}, when $n = 10$.}
    \label{fig:tree-lb-p4}
\end{figure}

\subsection{Case \texorpdfstring{$p=5$}{p=5}}

%%%%%%%%%%%%%%%%%%%%%%%%

The aim of this subsection is to prove the assertion (iii) of Theorem~\ref{thm:trees} which states  $\id^{\leq 5}_{\cal F}(n), \conv^{\leq 5}_{\cal F}(n)=\frac{2}{7}n + \Theta(1)$. The upper bound is given is Theorem~\ref{thm:tree5} and the lower bound in Proposition~\ref{prop:tree5-tight}.
In order to prove them, we need some preliminaries.

\medskip
 
Let $T$ be a tree of order $n$.
  A {\bf good $5$-set with root $t$} in $T$ is a set $X$ of five vertices such that $T\langle X\rangle$ is a tree and $T-(X\setminus\{t\})$ is a tree.

A {\bf good $5$-pair with roots $t_1,t_2$} in a tree $T$ is a pair $(X_1, X_2)$ of sets of at most five vertices such that $T\langle X_1\rangle$ has four edges, $T\langle X_2 \rangle$ has three edges, $T\langle X_1 \cap X_2 \rangle$ has no edge, and 
$T - ((X_1 \cup X_2) \setminus \{t_1, t_2\})$ is a tree.
%$(T-(X_1\setminus\{t_1\}))\cup (X_2\setminus\{t_2\})$ is a tree.

\begin{lemma}\label{lem:good5}
Every tree of order $n\geq 5$ contains either either a good $5$-set or a good $5$-pair. 
\end{lemma}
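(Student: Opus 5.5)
The plan is to argue locally at the end of a longest path, as in the proof of Lemma~\ref{lem:decomp-tree4}. Let $(v_1,\dots,v_t)$ be a longest path of $T$. If $n=5$, then $X=V(T)$ is a good $5$-set with any root, so assume $n\geq 6$, hence $t\geq 4$.

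By maximality of the path, every component of $T-v_3$ not containing $v_4$ is a \emph{piece} hanging at $v_3$. Each piece is either a leaf (size $1$), a pendant edge (size $2$), or a star whose centre is adjacent to $v_3$ (size $d(\cdot)$). Two observations drive the argument.

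First, if some set of pieces has total size exactly $4$, then $v_3$ together with these pieces is a good $5$-set with root $v_3$. Removing the four non-root vertices deletes whole pieces, so the rest stays connected.

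Second, if some piece is a star with at least $4$ leaves, then its centre and four of its leaves form a good $5$-set rooted at the centre. Likewise, if the pieces at $v_3$ have total size exactly $3$, then these pieces together with $v_3$ and $v_4$ form a good $5$-set rooted at $v_4$.

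So piece sizes lie in $\{1,2,3\}$, with $3$ meaning a cherry. A short subset-sum check shows the remaining ``bad'' configurations at $v_3$ are:
\begin{itemize}
\item a single leaf;
\item a single pendant edge;
\item two leaves;
\item at least two cherries, plus at most one pendant edge.
\end{itemize}
The first three cases are impossible, or reduce to $t\leq 4$ small cases, because $v_2$ has degree at least $2$. More precisely, they force $v_3$ to have only the piece containing $v_2$, and we then move one step up the path: replace $v_3$ by $v_4$ and repeat the same subset-sum analysis there. I expect this bookkeeping to go through with finitely many cases.

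The genuine obstacle is the configuration of two cherries $c_1\{a_1,b_1\}$ and $c_2\{a_2,b_2\}$ at $v_3$. Here no good $5$-set need exist, and I would build a good $5$-pair:
\[
X_1=\{a_1,b_1,c_1,v_3,c_2\}, \qquad X_2=\{a_2,b_2,c_2\}\cup\{u,w\}.
\]
The set $X_1$ spans $4$ edges and has root $t_1=v_3$. The pair $uw$ is an edge far from $v_3$ whose removal of one endpoint keeps the tree connected; the natural choice is $w=v_t$ and $u=v_{t-1}$, a pendant edge at the other end of the longest path, and the root is $t_2=u$.

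Then $X_2$ spans exactly the $3$ edges $a_2c_2$, $b_2c_2$, $uw$, and $X_1\cap X_2=\{c_2\}$ spans no edge. Deleting $(X_1\cup X_2)\setminus\{v_3,u\}$ removes two whole pieces at $v_3$ and the leaf $v_t$, so the result is a tree.

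The point to verify carefully is that $u,w$ are disjoint from and non-adjacent to $X_1\cup\{a_2,b_2,c_2\}$. This should hold when $t\geq 6$. The short cases $t\in\{4,5\}$ (where $v_{t-1}$ may coincide with $v_3$, or $v_t$ may lie in a piece) I would handle directly. There one uses a third cherry or pendant edge at $v_3$, or the symmetric structure at $v_{t-2}$, to supply the disjoint edge for $X_2$.
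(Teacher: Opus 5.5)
Your strategy is the paper's: work at the end of a longest path $(v_1,\dots,v_t)$, obtain good $5$-sets as unions of hanging pieces, and otherwise build a good $5$-pair that borrows an edge or a $2$-edge path at the far end. Your two-cherry pair is exactly the paper's. However, the case analysis has two real holes.

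First, your list of bad configurations at $v_3$ is incomplete. Suppose the pieces are exactly one cherry $c_1\{a_1,b_1\}$ and one pendant edge $c_2a_2$. Then no union of pieces has size $4$ and the total is $5\neq 3$, so neither observation applies, yet this configuration is not on your list. Your two-cherry pair does not adapt, since $\{a_2,c_2,v_{t-1},v_t\}$ spans only two edges, so a new construction is needed. The paper handles this case (its subcase $d(v_2)=3$, $d(w_2)=2$) by borrowing a $2$-edge path at the far end whose two non-root vertices can be deleted. Finding that path uses $d(v_{t-1})\in\{2,3\}$, which follows from your two observations applied to the reversed path. One then takes $\{v_t,v_{t-1},v_{t-2}\}$ rooted at $v_{t-2}$, or $\{v_t,w_t,v_{t-1}\}$ rooted at $v_{t-1}$, with $t=5$ checked separately.

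Second, and more seriously, the case $d(v_2)=d(v_3)=2$ is only asserted, yet it occupies the whole last part of the paper's proof. The ``same'' subset-sum analysis does not apply verbatim at $v_4$. A neighbour $w_3\neq v_3,v_5$ of $v_4$ may root a depth-$2$ subtree that is not a star, and neither observation handles such a piece when it has five or more vertices. The missing idea is that your first-stage analysis holds for \emph{every} longest path. Applied to $(w_1,w_2,w_3,v_4,\dots,v_t)$ and to $(v_t,\dots,v_1)$, it shows, unless you are already done, three things:
\begin{itemize}
\item every depth-$2$ piece at $v_4$ is a bare path with $d(w_2)=d(w_3)=2$;
\item $d(v_{t-1})=d(v_{t-2})=2$;
\item $t\geq 7$, because $d(v_4)\geq 3$ (otherwise $\{v_1,\dots,v_5\}$ is a good $5$-set rooted at $v_5$).
\end{itemize}
Only then can you close the argument with new pairs, as in the paper's final paragraphs. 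When the piece of $w_3$ is $\{w_3,w_1,w_2\}$, use $(\{v_1,v_2,v_3,v_4,w_3\},\{w_1,w_2,w_3,v_{t-1},v_t\})$ rooted at $v_4,v_{t-1}$. When $w_3$ carries a single leaf $w_2$, use $(\{v_1,v_2,v_3,v_4,w_3\},\{w_2,w_3,v_{t-2},v_{t-1},v_t\})$ rooted at $v_4,v_{t-2}$.

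Two minor points. The bound $n\geq 6$ does not give $t\geq 4$ (a star is a counterexample), though that case is trivial. In the two-cherry case your worry about small $t$ is unnecessary: two pieces of size at least $2$ at $v_3$ force $t\geq 5$, and $u=v_{t-1}$, $w=v_t$ already works when $t=5$.
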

\begin{proof}
We prove the result by considering a minimum counterexample $T$.
Clearly, $n \geq 6$.

\begin{claim}\label{claim:3leaves}
    Every vertex of $T$ is adjacent to at most three leaves.
\end{claim}
\begin{subproof}
If a vertex $v$ is adjacent to four leaves $u_1, u_2, u_3, u_4$, then $\{u_1 ,u_2, u_3, u_4, v\}$ is a good $5$-set with root $v$, a contradiction.
\end{subproof}

Let $(v_1, \dots , v_{\ell})$ be a longest path in $T$.

\begin{claim}\label{claim:v2}
$d(v_2) = 2$
\end{claim}
\begin{subproof}
By Claim~\ref{claim:3leaves},  $d(v_2) \leq 4$. 

If $d(v_2) = 4$, then let $v_1, w_1, x_1, v_3$ be the four neighbours of $v_2$. The set $\{v_1, w_1, x_1,  v_2, v_3\}$ is a good $5$-set with root $v_3$, a contradiction.
Henceforth, we may assume $d(v_2) = 3$.
 Let $u_1$ be the neighbour of $v_2$ distinct from $v_1$ and $v_3$.

If $d(v_3) = 2$, then $\{u_1, v_1, v_2, v_3, v_4\}$ is a good $5$-set with root $v_4$, a contradiction.
Henceforth we may assume $d(v_3) \geq 3$. Let $w_2$ be a neighbour of $v_3$ distinct from $v_2$ and $v_4$.

If $d(w_2)=1$, then  $\{u_1, v_1, v_2, w_2, v_3\}$ is a good $5$-set with root $v_3$, a contradiction.
Henceforth we may assume that $d(w_2)\geq 2$. In particular, this implies that $\ell \geq 5$.

Let $w_1$ be a vertex adjacent to $w_2$ distinct from $v_3$.
Then $(w_1, w_2, v_3, \dots , v_{\ell})$ is a longest path in $T$. Thus by Claim~\ref{claim:3leaves}, and the above argument,
we have $d(w_2)\leq 3$.

Assume for a contradiction that $d(w_2)=2$. 
If $d(v_{\ell-1})\geq 3$, let $w_\ell$ be a neighbour of $v_{\ell-1}$ distinct from $v_{\ell-2}$. 
Then $(\{u_1, v_1, v_2, w_2, v_3\}, \{w_1, w_2, v_\ell, w_\ell, v_{\ell-1}\})$ is a good $5$-pair with roots $v_3, v_{\ell-1}$.
If $d(v_{\ell-1}) = 2$, then $(\{u_1, v_1, v_2, w_2, v_3\}, \{w_1, w_2, v_\ell, v_{\ell-1}, v_{\ell-2}\})$ is a good $5$-pair with roots $v_3, v_{\ell-2}$. In both cases, we get a contradiction, so $d(w_2)=3$. Let $w_1$ and $x_1$ be the neighbours of $w_2$ distinct from $v_3$. 
Then $(\{u_1, v_1, v_2, w_2, v_3\}, \{w_1, x_1, w_2, v_{\ell}, v_{\ell-1}\})$ is a good $5$-pair with roots $v_3, v_{\ell-1}$, a contradiction.
\end{subproof}

\begin{claim}\label{claim:v3}
$d(v_3) = 2$.
\end{claim}
\begin{subproof}
If a neighbour of $v_3$ distinct from $v_4$ has degree at least $2$, the it is the second vertex of a longest path, and so by Claim~\ref{claim:v2} it has degree at most $2$.

Suppose $v_3$ has a neighbour $w_2$ distinct from $v_2$ and $v_4$ of degree $2$. Let $w_1$ be its neighbour distinct from $v_3$. Then $\{v_1, w_1, v_2, w_2, v_3\}$ is a good $5$-set with root $v_3$, a contradiction.
Henceforth all neighbours of $v_3$ distinct from $v_2$ and $v_4$ are leaves. If $d(v_3)\geq 4$, let $w_2$ and $x_2$ be two neighbours of $v_3$ distinct from $v_2$ and $v_4$.
Then $\{v_1, v_2, w_2, x_2, v_3\}$ is a good $5$-set with root $v_3$, a contradiction.
If $d(v_3) = 3$, let $w_2$ be the neighbour of $v_3$ distinct from $v_2$ and $v_4$.
Then $\{v_1, v_2, w_2, v_3, v_4\}$ is a good $5$-set with root $v_4$, a contradiction.
Thus $d(v_3) = 2$.
\end{subproof}

Now $d(v_4)\geq 3$ for otherwise $\{v_1, v_2, v_3, v_4, v_5\}$ would be a good $5$-set with root $v_5$.

Let $w_3$ be a neighbour of $v_4$ distinct from $v_3$ and $v_5$.

Note that $w_3$ is not a leaf for otherwise $\{v_1, v_2, v_3, w_3, v_4\}$ would be a good $5$-set with root $v_4$.
In particular, $\ell \geq 6$.

If there is a path $(w_1, w_2, w_3)$ in $T\setminus w_3v_4$, then $(w_1, w_2, w_3, v_4 \dots , v_{\ell})$ is a longest path in $T$.
Thus, by Claims~\ref{claim:v2} and~\ref{claim:v3}, $d(w_2)=d(w_3)=2$, and then $(\{v_1, v_2, v_3, w_3, v_4\},$ $\{w_1, w_2, w_3, v_{\ell}, v_{\ell-1}\})$ is a good $5$-pair with roots $v_4, v_{\ell-1}$, a contradiction.
Henceforth, all neighbours of $w_3$ distinct from $v_4$ is a leaf. Let $W_2$ be the set of leaves adjacent to $w_3$.
By Claim~\ref{claim:3leaves}, $|W_2|\leq 4$, and $|W_2|\geq 1$ since $w_3$ is not a leaf.

If $|W_2| = 3$, then $(\{v_1, v_2, v_3, w_3, v_4\}, W_2\cup \{w_3\})$ is a good $5$-pair with roots $v_4, w_3$, a contradiction.
If $|W_2| = 2$, then $(\{v_1, v_2, v_3, w_3, v_4\},$ $\{w_1, w_2, w_3, v_{\ell}, v_{\ell-1}\})$ is a good $5$-pair with roots $v_4, v_{\ell-1}$, a contradiction.

Hence, $w_3$ is a adjacent to a unique leaf $w_2$.
Similarly to Claim~\ref{claim:v2}, $d(v_{\ell-1}) = 2$. Then $(\{v_1, v_2, v_3, w_3, v_4\}, \{w_2, w_3, v_\ell, v_{\ell-1}, v_{\ell-2}\})$ is a good $5$-pair with roots $v_4, v_{\ell-2}$, a contradiction.

This completes the proof of Lemma~\ref{lem:good5}.
\end{proof}

\begin{theorem}\label{thm:tree5}
If $T$ is a tree of order $n$, then $\conv^{\leq 5}(T) \leq \left\lceil \frac{2n-2}{7}\right\rceil$.
\end{theorem}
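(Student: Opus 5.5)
We prove the bound by strong induction on $n$, reducing via Lemma~\ref{lem:good5}, in the same spirit as Theorem~\ref{thm:tree4}. For small trees ($n\leq 5$) a single inversion of $V(T)$ reverses every edge. This gives $\conv^{\leq 5}(T)\leq 1 \leq \lceil (2n-2)/7\rceil$ for $2\leq n\leq 5$; for $n=1$ the bound $0$ is trivial. For $n\geq 6$ we apply Lemma~\ref{lem:good5}. As in the proof of Theorem~\ref{thm:tree4}, it suffices to find a family of $(\leq 5)$-sets such that every edge of $T$ lies inside an odd number of them. We build it from the good structure plus the family obtained inductively on the remaining tree.

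\emph{Case of a good $5$-set $X$ with root $t$.} Set $T'=T-(X\setminus\{t\})$, a tree of order $n-4$ by definition. Since $T\langle X\rangle$ is a tree on five vertices, it has four edges. These are exactly the edges of $T$ not in $T'$: indeed, $T$ has $n-1$ edges, $T'$ has $n-5$, and the edges inside $X$ and those of $T'$ are disjoint because $X\cap V(T')=\{t\}$. Inverting $X$ reverses precisely these four edges. Any subset of $V(T')$ reverses in $T$ only edges of $T'$, since two vertices of $T'$ adjacent in $T$ span an edge of $T'$ (it is an induced subtree). By induction we get at most $1+\lceil (2(n-4)-2)/7\rceil$ inversions, which is at most $\lceil(2n-2)/7\rceil$ only if $8/7\geq 1$. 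That holds, so this case works.

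\emph{Case of a good $5$-pair $(X_1,X_2)$.} Set $T'=T-((X_1\cup X_2)\setminus\{t_1,t_2\})$. We use the two inversions $X_1$ and $X_2$. $T\langle X_1\rangle$ has four edges and $T\langle X_2\rangle$ has three, and these share no edge since $T\langle X_1\cap X_2\rangle$ is edgeless. Together they reverse seven edges, each exactly once. The main obstacle is to check that these seven edges are exactly the edges of $T$ outside $T'$, so that $T'$ has order $n-7$; equivalently, that $|X_1\cup X_2|-2 = 7$, with no edge of $T$ crossing between the removed part and $T'$ other than those inside $X_1$ or $X_2$. I would verify this by edge counting. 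The removed set $R=(X_1\cup X_2)\setminus\{t_1,t_2\}$ satisfies $|E(T)|-|E(T')|=|R|$ because $T'$ is a tree, and every edge touching $R$ must be among the seven reversed ones. If some configuration from the proof of Lemma~\ref{lem:good5} has $|R|<7$, the definition would need to be read as guaranteeing $|R|=7$. In every configuration listed in that proof, $X_1$ and $X_2$ each have five vertices, meet in one vertex, and the roots are distinct and lie outside the intersection, giving $|R|=9-2=7$, as required.

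Then induction on $T'$ gives at most $2+\lceil(2(n-7)-2)/7\rceil=\lceil(2n-2)/7\rceil$ inversions. Subsets of $V(T')$ again reverse only edges of $T'$, so the combined family reverses every edge of $T$ exactly once. This transforms $\vec{T}$ into $\cev{T}$, proving the bound.
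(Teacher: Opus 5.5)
Your route is the paper's own: induction on $n$, Lemma~\ref{lem:good5}, one inversion for a good $5$-set and two for a good $5$-pair, with the same arithmetic. You also correctly notice something the paper's proof uses without stating it. In the pair case one needs $E(T)\setminus E(T')=E(T\langle X_1\rangle)\cup E(T\langle X_2\rangle)$. Since $T$ and $T'$ are trees, this is equivalent to $|V(T')|=n-7$ together with no inverted edge lying in $T'$. The definition of a good $5$-pair does not by itself guarantee this.

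Your way of establishing it, however, is false. It is not true that every pair produced in the proof of Lemma~\ref{lem:good5} has $|X_1|=|X_2|=5$, exactly one common vertex, and distinct roots outside the intersection. In the case $|W_2|=3$, the pair is $(\{v_1,v_2,v_3,w_3,v_4\},\,W_2\cup\{w_3\})$ with roots $v_4,w_3$. So $|X_2|=4$, and the root $w_3$ is the common vertex.

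This pair satisfies the definition verbatim, yet the argument fails:
\begin{itemize}
\item $R=\{v_1,v_2,v_3\}\cup W_2$ has only six vertices.
\item $T'$ keeps the edge $v_4w_3$, which also lies in $T\langle X_1\rangle$. Your combined family therefore reverses $v_4w_3$ an even number of times, i.e.\ not at all.
\item The count becomes $2+\lceil(2n-14)/7\rceil=\lceil 2n/7\rceil$, which can exceed $\lceil(2n-2)/7\rceil$.
\end{itemize}

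The repair is easy. In this configuration, take $v_4$ as the only root, i.e.\ delete $w_3$ as well. All neighbours of $w_3$ other than $v_4$ lie in $W_2$, so $T-(\{v_1,v_2,v_3,w_3\}\cup W_2)$ is a tree of order $n-7$. The seven edges $v_1v_2,v_2v_3,v_3v_4,v_4w_3$ and $w_3w$ ($w\in W_2$) are then exactly the edges outside it. More robustly, add the condition $E(T)\setminus E(T')=E(T\langle X_1\rangle)\cup E(T\langle X_2\rangle)$ to the definition of a good $5$-pair. With that condition, your induction step goes through as written.
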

\begin{proof}
We prove the result by induction on $n$, the result holding trivially if $n\leq 4$. 

Let $T$ be a tree of order $n\geq 5$.
By Lemma~\ref{lem:good5}, $T$ has either a good $5$-set or a good $5$-pair.

Assume first that $T$  has a  good $5$-set $X$ with root $t$.  By the induction hypothesis, we can reverse all the arcs of $T-(X\setminus\{t\})$ in at most $\left\lceil \frac{2n-10}{7}\right\rceil$ $(\leq 5)$-inversions.
Inverting next $X$, we have all the arcs reversed.
Thus $\conv^{\leq 5}(T) \leq\left\lceil \frac{2n-10}{7}\right\rceil + 1 <  \left\lceil \frac{2n-2}{7}\right\rceil$. 

Assume now that $T$  has a good $5$-pair $(X_1, X_2)$ with roots $t_1,t_2$.  By the induction hypothesis, we can reverse all the arcs of $T-((X_1\cup X_2)\setminus\{t_1,t_2\})$ in at most $\left\lceil \frac{2n-16}{7}\right\rceil$ $(\leq 5)$-inversions.
Inverting next $X_1$ and $X_2$, we have all the arcs reversed.
Thus $\conv^{\leq 5}(T) \leq \left\lceil \frac{2n-16}{7}\right\rceil +2 = \left\lceil \frac{2n-2}{7}\right\rceil$.

In both cases,  $\conv^{\leq 5}(T) \leq \left\lceil \frac{2n-2}{7}\right\rceil$. 
\end{proof}

Theorem~\ref{thm:tree5} and Lemma~\ref{lem:t'andt} yield the following.

\begin{corollary}
    $\id^{\leq 5}_{\cal F}(n) \leq \frac{2}{7} n + \frac{8}{7}$.
\end{corollary}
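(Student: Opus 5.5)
The corollary follows by combining Theorem~\ref{thm:tree5} with Lemma~\ref{lem:t'andt}. The only work is to write the bound of Theorem~\ref{thm:tree5} in the affine form $\alpha n+\beta$ required by the lemma, and to pass from trees to forests.

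\emph{Affine form.} Since $\lceil x/7\rceil \leq (x+6)/7$ for every integer $x$, Theorem~\ref{thm:tree5} gives, for every tree $T$ of order $n$,
\[
\conv^{\leq 5}(T)\leq \left\lceil \frac{2n-2}{7}\right\rceil \leq \frac{2n+4}{7}.
\]

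\emph{From trees to forests.} Let $F$ be a forest of order $n$ with components $T_1,\dots,T_k$ of orders $n_1,\dots,n_k$. Inverting subsets of different components never interacts, so
\[
\conv^{\leq 5}(F)\leq \sum_{j=1}^{k} \left\lceil \frac{2n_j-2}{7}\right\rceil .
\]
Summing the affine bound naively gives $\frac{2n}{7}+\frac{4k}{7}$, which is not bounded by a constant. We avoid this with an absorption step: adding edges between the components yields a spanning tree $T\supseteq F$ of order $n$. Reversing all arcs of an orientation of $T$ and then restricting to $F$ reverses all arcs of $F$. Indeed, $\Inv$ restricted to $E(F)$ depends only on which pairs lie in the inverted sets, and the inverted family works on all edges of $T$, in particular on those of $F$. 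Hence
\[
\conv^{\leq 5}(F)\leq \conv^{\leq 5}(T)\leq \frac{2}{7}n+\frac{4}{7}.
\]

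\emph{Conclusion.} Apply Lemma~\ref{lem:t'andt} with $\alpha=2/7$ and $\beta=4/7$. It yields $\id^{\leq 5}_{\cal F}(n)\leq \frac{2}{7}n+\frac{8}{7}$ for all $n$, which is exactly the claim.

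The only point needing care is this monotonicity under taking spanning subgraphs. It is clear because a $(\leq 5)$-family reversing every edge of $T$ reverses every edge of $F$.
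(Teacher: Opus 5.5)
Your proof is correct and follows the paper's route: Theorem~\ref{thm:tree5} gives $\conv^{\leq 5}_{\cal F}(n)\leq \frac{2}{7}n+\frac{4}{7}$, and Lemma~\ref{lem:t'andt} then doubles the additive constant to give $\frac{8}{7}$. Your spanning-tree argument makes explicit the passage from trees to forests, which the paper leaves implicit. That step is genuinely needed, because the lemma applies the bound to the forests $F\langle A_j\rangle$, and your spanning tree keeps the inverted sets inside $A_j$, as the lemma requires.
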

\begin{proof}
For every nonnegative integer $n$, we have $\conv^{\leq 5}_{\cal F}(n) \leq \frac{2}{7} n + \frac{4}{7}$ by Theorem~\ref{thm:tree5}. 
Thus, by Lemma~\ref{lem:t'andt}, $\id^{\leq 5}_{\cal F}(n) \leq \frac{2}{7} n + \frac{8}{7}$.
\end{proof}

The previous two results are tight up to a small additive constant as shown by the following
proposition.

\begin{proposition}\label{prop:tree5-tight}
 For every positive integer $n$, there is a tree $T$ of order $n$ such that $\conv^{\leq 5}(T) \geq 2 \left\lfloor \frac{n-1}{7}\right\rfloor$.
\end{proposition}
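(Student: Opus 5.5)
The plan is to imitate the proof of Proposition~\ref{prop:tree4-tight}: build a spider-like tree and use a weighting argument on its edges. Let $\vec{T}$ be any orientation and $\cev{T}$ its converse. Every edge must be reversed an odd number of times. So if the edges carry nonnegative weights of total $W$, and every $(\leq 5)$-inversion reverses a set of edges of total weight at most $M$, then $\conv^{\leq 5}(T)\geq \lceil W/M\rceil$.

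An inversion of a set $X$ with $|X|\leq 5$ reverses exactly the edges of the induced forest $T\langle X\rangle$, which has at most $4$ edges. With uniform weights this only gives $(n-1)/4<2(n-1)/7$, so the weights must be non-uniform, as they were for $p=4$.

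\textbf{Construction.} Write $n-1=7s+r$ with $0\leq r<7$. Take a centre $x$ and attach $s$ pairwise isomorphic gadgets, each on $7$ vertices and hanging from $x$ by one edge. Put the $r$ remaining vertices on pendant edges at $x$ with weight $0$, which affects nothing. The gadget should be designed so that, with suitable weights, its total weight is $2M$, while every $5$-set $X$ satisfies $w(E(T\langle X\rangle))\leq M$.

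The natural first candidate is a gadget made of pendant paths from $x$, for instance legs $x$--$a$--$b$--$c$ with weights increasing away from the centre, as in the $p=4$ case. The point is to penalise sets that use many vertices near $x$: those vertices contribute little weight, while the outer edges are heavy but far apart. Summing over gadgets then gives $W=2sM$, hence $\conv^{\leq 5}(T)\geq 2s = 2\lfloor (n-1)/7\rfloor$.

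\textbf{Key steps, in order.}
\begin{enumerate}
\item Fix the gadget shape and its weights.
\item Classify the possible induced forests $T\langle X\rangle$ with $|X|\leq 5$. Such an $X$ meets at most a few gadgets, and may or may not contain $x$. One checks the weight bound in each case, using that a component of $T\langle X\rangle$ with $k$ vertices has only $k-1$ edges, so splitting $X$ across gadgets costs edges.
\item Conclude by the counting inequality above.
\end{enumerate}

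\textbf{Main obstacle.} The hard part is choosing the gadget and weights. Simple legs of length $3$ with weights $(1,2,2)$ fail: the set $\{c,b,a,x,a'\}$ reaches weight $6>\tfrac{7}{6}\cdot 5$. So I would first set this up as a small linear program. The variables are the edge weights of one gadget. The constraints are one inequality $w(E(T\langle X\rangle))\leq 1$ for each $5$-set type $X$, and it suffices to consider sets meeting at most two gadgets plus $x$. The objective is to maximise the gadget weight. Candidate gadgets to try are 7-vertex trees attached at one vertex, such as a path of length $2$ ending in a vertex with several pendant paths. If the LP optimum reaches $2$, that gadget and the optimal weights give the proposition.

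If no single-edge-attached gadget works, the fallback is a direct combinatorial argument. One would show that within each gadget, no single $(\leq 5)$-inversion can reverse all of the gadget's edges together with what other inversions supply. Then a charging argument assigns to every gadget at least two inversions.
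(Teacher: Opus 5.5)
\paragraph{The gadget step cannot be completed.} Your outline leaves its essential step open. The gadget and weights are delegated to a linear program whose optimum you hope is $2$, and the fallback charging argument is never specified. In fact this step cannot be carried out for \emph{any} tree of the shape you propose.

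Let $G$ be any tree on $7$ vertices and $v_1\dots v_t$ a longest path of $G$. By maximality, all neighbours of $v_2$ other than $v_3$ are leaves. If $d_G(v_2)\geq 3$, let $u,v$ be two leaves adjacent to $v_2$; otherwise let $\{u,v\}=\{v_1,v_2\}$. In both cases $S=V(G)\setminus\{u,v\}$ induces a subtree with $4$ edges, and the other two edges of $G$ form a path $P$ on $3$ vertices.

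Now take your tree $T$ with centre $x$, copies $G_1,\dots,G_s$ and $r$ pendant edges at $x$, and invert the following sets.
\begin{itemize}
\item Every $S_j$.
\item Group the copies into triples $(j,k,l)$ and let $e_k,e'_k$ be the two edges of $P_k$. Invert $V(P_j)\cup e_k$ and $e'_k\cup V(P_l)$, viewing edges as pairs of vertices. These are $5$-sets inducing exactly $3$ edges, since distinct copies are non-adjacent.
\item Sets made of $x$ and at most four of its neighbours.
\end{itemize}
Every edge is reversed exactly once, hence
\[
\conv^{\leq 5}(T)\leq s+\left\lceil \tfrac{2s}{3}\right\rceil+\left\lceil \tfrac{s+r}{4}\right\rceil=\tfrac{23}{12}s+O(1).
\]
For instance, $s=12$ and $r=0$ give $23<24=2s$. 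So no lower-bound argument can give $2s$ for such trees. This covers additive edge weights (which in any case only bound the fractional covering number and ignore parity) as well as any charging scheme.

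\paragraph{The paper's proof has the same defect.} It uses exactly this shape. The gadget hanging from $r$ is the complete binary tree of depth $2$ on $\{x_j,y_j^1,y_j^2,z_j^1,\dots,z_j^4\}$. Instead of additive edge weights, the paper puts non-additive weights on traces, depending on the number of edges of the branch in the trace and on whether $rx_j$ is one of them. It asserts that every inversion has weight at most $5$ and every branch has weight at least $10$.

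The family above refutes the second assertion. Use the sets $\{x_j,y_j^1,y_j^2,z_j^1,z_j^2\}$, $\{y_j^2,z_j^3,z_j^4,y_k^2,z_k^3\}$, $\{y_k^2,z_k^4,y_l^2,z_l^3,z_l^4\}$ and $\{r,x_a,x_b,x_c,x_d\}$. Every inversion has weight exactly $5$, but every branch only receives $5+\frac54+\frac{10}{3}=\frac{115}{12}<10$. For $q=12$, all $84$ edges are reversed by $23$ inversions.

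So the obstacle you identified is not a matter of tuning weights. A proof of the proposition, if it holds, needs a construction of a genuinely different kind, and neither your outline nor the paper supplies one.
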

\begin{proof}
It suffices to prove the result when $n-1 \equiv 0 \mod 7$.
So let $n = 7q+1$. Let $T$ be the tree defined by
\begin{eqnarray*}
    V(T) & = & \{r\}\cup \bigcup_{j=1}^q \{x_j, y_j^1, y_j^2, z_j^1, z_j^2, z_j^3, z_j^4 \} ,   \\
     E(T) & = & \bigcup_{j=1}^q \{rx_j, x_jy_j^1, x_jy_j^2, y_j^1z_j^1, y_j^1z_j^2, y_j^2z_j^3, y_j^2z_j^4 \} .
\end{eqnarray*}

\begin{figure}[hbtp]
    \centering
    \includegraphics{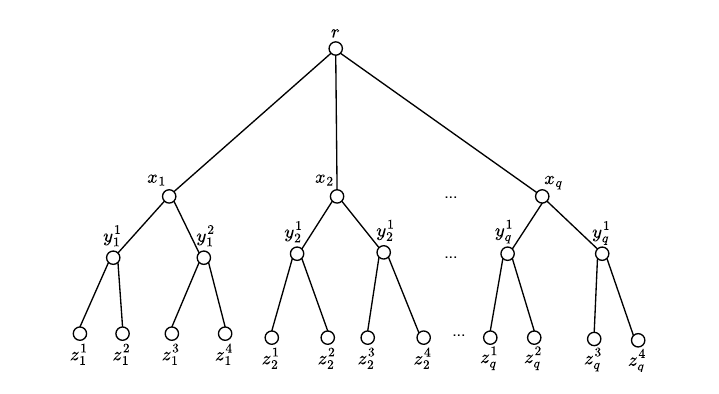}
    \caption{Tree $T$ defined in Proposition~\ref{prop:tree5-tight}.}
    \label{fig:tree-lb-p5}
\end{figure}

See Figure~\ref{fig:tree-lb-p5}.
For $j\in [q]$, the {\it $j$th branch} is the subtree $B_j$
with vertex set $\{r\}\cup \{x_j, y_j^1, y_j^2, z_j^1, z_j^2, z_j^3, z_j^4 \} $ and edge set $\{rx_j, x_jy_j^1, x_jy_j^2, y_j^1z_j^1, y_j^1z_j^2, y_j^2z_j^3, y_j^2z_j^4\} $.

Let $\vec{T}$ be an orientation of $T$ and let $\cev{T}$ be its converse, and let $(X_i)_{i\in I}$ be a family of sets whose inversions transform  $\vec{T}$ into $\cev{T}$.
The {\it trace} $T_{i,j}$ of a set $X_i$ on branch $B_j$ is the set of edges of $B_j$ with both end-vertices in $X_i$.
We assign weights to the traces as follows.
\begin{itemize}
    \item If $|T_{i,j}| = 0$, then $w(T_{i,j}) =0$.
    
    \item If $|T_{i,j}| = 1$, then $\displaystyle w(T_{i,j}) = \left\{
    \begin{array}{ll}
      5/3,&\text{if $rw_j\notin T_{i,j}$,}\\
    
      5/4, &\text{if $rw_j\in T_{i,j}$.}
    \end{array}\right.$
    
    \item If $|T_{i,j}| = 2$, then $\displaystyle w(T_{i,j}) = \left\{
    \begin{array}{ll}
      10/3,&\text{if $rw_j\notin T_{i,j}$,}\\
    
      9/4, &\text{if $rw_j\in T_{i,j}$.}
    \end{array}\right.$

    \item If $|T_{i,j}| = 3$, then $\displaystyle w(T_{i,j}) = \left\{
    \begin{array}{ll}
      5,&\text{if $rw_j\notin T_{i,j}$,}\\
    
      7/2, &\text{if $rw_j\in T_{i,j}$.}
    \end{array}\right.$

    \item If $|T_{i,j}| = 4$, then $w(T_{i,j}) =5$.
    \end{itemize}

On the one hand, the weight of each $X_i$ which is
$w(X_i) = \sum_{j\in [q]} w(T_{i,j})$ is at most $5$.

On the other hand, let us consider the weight of branch $B_j$ which is $w(B_j) = \sum_{i\in I} w(T_{i,j})$.
Observe that $B_j$ has at most one trace with four edges and that if it has one such trace, then $rw$ is not in a trace of size $2$ or $3$. Hence, one easily sees that $w(B_j)\geq 10$.

Thus $ 5|I| \geq  \sum_{i\in I} w(X_i)  = \sum_{j\in [q]} w(B_j) \geq 10 q$, so $|I| \geq 2q = 2 \left\lfloor \frac{n-1}{7}\right\rfloor$.
\end{proof}

\subsection{General case}
%%%%%%%%%%%%%%%%%%%%%%%%

The aim of this subsection is to prove the assertion (iv) of Theorem~\ref{thm:trees} which states  $\conv^{\leq p}_{\cal F}(n)  \leq \id^{\leq p}_{\cal F}(n) \leq \frac{n-1}{p- c\sqrt{p}} + 2$ with $c = \sqrt{2 + \sqrt{2}}$ for any $p\geq 4$.
We need the following lemma.

\begin{lemma}\label{lem:set-tree-sqrtp}
    Let $p \geq 4$ be an integer. Let $T$ be a tree with at least $p$ vertices rooted at a vertex $r$.  Then, there exists a set $X \subseteq V(T)$ such that
    \begin{enumerate}[label=(\roman*)]
        \item $|X| \leq p$,
        \item $T\langle X\rangle$ has at least $p - \sqrt{(2 + \sqrt{2})p}$ edges, and
        \item every non-trivial connected component in $T \setminus E(T\langle X\rangle)$ contains $r$. In particular, this implies that there is at most one non-trivial connected component in $T \setminus E(T\langle X\rangle)$.
    \end{enumerate}

\end{lemma}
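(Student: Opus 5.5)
The plan is to build $X$ as a union of connected ``pendant pieces'' peeled off $T$ one at a time. A piece is a set $P=\{w\}\cup T_{c_1}\cup\dots\cup T_{c_k}$, where $w$ is a vertex, $c_1,\dots,c_k$ are some of its children, and $T_c$ denotes the full subtree of descendants of $c$ in the current tree rooted at $r$. Deleting the edges of $T\langle P\rangle$ leaves every vertex of $P\setminus\{w\}$ isolated. The vertex $w$ stays attached to its parent, and the remaining tree $T-(P\setminus\{w\})$ is still a tree containing $r$. So if we keep peeling pieces from the current remainder, the union $X$ of all pieces satisfies (iii). The number of edges of $T\langle X\rangle$ is at least $|X|$ minus the number of pieces, because each piece induces a tree; two pieces sharing their top vertex $w$ only improves this count. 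It therefore suffices to control two losses: the slack $p-|X|$ and the number of pieces.

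\textbf{Peeling step.} Fix a threshold $a$. While the current tree has at least $a$ vertices, take a deepest vertex $w$ whose subtree has at least $a$ vertices. Every child subtree of $w$ then has fewer than $a$ vertices, so greedily adding child subtrees to $\{w\}$ gives a piece of size in $[a,2a)$. Since $T$ has at least $p$ vertices, we can peel pieces until the accumulated size exceeds $p-2a$ without exceeding $p$. This uses at most about $p/a$ pieces, so $T\langle X\rangle$ has at least $p-2a-p/a$ edges.

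\textbf{Refining the constant.} The choice $a=\sqrt{p/2}$ in the peeling step only gives a loss of $2\sqrt{2p}$, and the lemma requires the smaller loss $\sqrt{(2+\sqrt 2)p}$. So the last stage must be handled more carefully. When the remaining budget $b=p-|X|$ drops below $2a$, I would not stop. Instead I would continue with a smaller threshold $a'$ of order $b/2$, or look directly for one piece whose size is exactly close to $b$. Here I use that the sizes of the child subtrees of the chosen $w$ are all smaller than the current threshold, so greedy subset sums hit every window of that width. This gives a two-level optimisation. Its loss has the form $p/a+(\text{pieces at level }2)+(\text{final slack})$, and optimising the two thresholds should yield $\sqrt{(2+\sqrt2)p}$; the nested square root suggests exactly such a two-parameter balance. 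A useful additional remark is that when $w$ has a single heavy child subtree of size $s<p$, taking $\{w\}\cup T_c$ directly gives $s$ edges on $s+1$ vertices, with loss $0$ per vertex apart from the slack.

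\textbf{Main obstacle.} I expect the hard part to be this second-level accounting, i.e.\ showing that the final slack together with the extra pieces stays below $\sqrt{(2+\sqrt2)p}-2a+\dots$ for the optimal pair of thresholds. I would first try $a=\sqrt{p/(2+\sqrt2)}$ at the first level, check the arithmetic, and adjust if needed. The lower bound $p\ge 4$ is presumably needed only so that these thresholds are at least $1$.
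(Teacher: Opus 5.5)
There is a genuine gap, and it goes beyond the second-level accounting you flag. Your claim that the union $X$ of peeled pieces automatically satisfies (iii) is false. $T\langle X\rangle$ contains not only the edges of the pieces but every edge joining two tops. Suppose a top $w$ has a parent $u$ that is also a top, while $w$ still has a descendant in the remainder. Then deleting $uw$ leaves $w$ and those descendants as a non-trivial component avoiding $r$. (Condition (iii) is equivalent to: whenever $v\neq r$ and its parent both lie in $X$, the whole subtree of $v$ lies in $X$.)

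Your refinement produces exactly this situation. Let $r$ have at least $q$ children $y_i$, each with $q-1$ leaf children, and let $p=q^2$. A first-level threshold above $q$ makes $r$ the only top. A second-level threshold below $q$ then makes some $y_i$ a top adjacent to $r$, with a leaf left behind.

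This is not a technicality that better bookkeeping fixes. In this tree every $X$ satisfying (i) and (iii) induces at most $p-\sqrt p$ edges:
\begin{itemize}
\item if $r\notin X$, each branch $B_i$ meeting $X$ contributes at most $|X\cap B_i|-1$ edges with $|X\cap B_i|\le q$, so there are at most $|X|(1-1/q)\le p-q$ edges;
\item if $r\in X$, every $y_i\in X$ forces its whole branch into $X$, so at most $q-1$ full branches fit and there are at most $q(q-1)$ edges.
\end{itemize}
Without (iii), a connected $p$-vertex subtree would already give $p-1$ edges. So the whole $\sqrt p$ loss comes from (iii), which your count of pieces plus final slack does not see. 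Moreover, the step from your $2\sqrt{2p}$ to $\sqrt{(2+\sqrt 2)p}$ is only asserted (``should yield''), not carried out.

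The missing idea is the paper's recursion at the root, which keeps $r$ out of $X$ so that (iii) becomes automatic. Order the branches $T_1,\dots,T_k$ of $r$ by decreasing size, and let $c=\sqrt{2+\sqrt2}$.
\begin{itemize}
\item If $|V(T)\setminus V(T_k)|\ge p$, discard $T_k$ and recurse.
\item If $|V(T_k)|\le c\sqrt p$, take $X=V(T)\setminus V(T_k)$.
\item Otherwise, put $T_1,\dots,T_{k-1}$ entirely into $X$ but not $r$. The surviving edges $rr_i$ keep everything attached to $r$. This loses only $k-1<\sqrt p/c$ edges, because $\alpha=\sum_{i<k}|V(T_i)|<p$ while each of these branches has more than $c\sqrt p$ vertices. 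Then apply induction to $T_k$ rooted at $r_k$ with budget $p-\alpha\le p/2$.
\end{itemize}
The total loss is at most $c\sqrt{p/2}+\sqrt p/c$, which equals $c\sqrt p$ exactly when $c^2=2+\sqrt2$. So the nested radical is the fixed point of a self-similar recursion into a single branch, not the outcome of balancing two thresholds.
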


\begin{proof}
    The proof follows by induction on $p + |V(T)|$. If $|V(T)| = p$, then $X = V(T)$ is a set satisfying Properties (i) to (iii). We may thus assume that $|V(T)| > p$.
    
     Let $r_1, \ldots, r_k$ be the neighbours of $r$ in $T$ and let $T_i$ be the subtree rooted at $r_i$. Without loss of generality, we may assume that $|V(T_1)| \geq \ldots \geq |V(T_k)|$.
    If $|V(T) \setminus V(T_k)| \geq p$, then by applying the induction to $p$, $T - V(T_k)$ and $r$, there is a set $X$ that satisfies Properties (i) to (iii) in $T - V(T_k)$ which directly implies that $X$ satisfies the Properties (i) to (iii) in $T$.
    
    Henceforth we may assume $|V(T) \setminus V(T_i)| < p$ for every $1 \leq i \leq k$. So $k \geq 2$. Let $\alpha = \sum_{i = 1}^{k - 1}|V(T_i)|$. Note that $p - \alpha > 0$ and $\alpha \geq \frac{|V(T)| - 1}{2} \geq \frac{p}{2}$. %\CR{Isn't it $\alpha \geq \frac{|V(T)|-1}{2}$, since $r$ is missing?} \carol{Yes,  fixed.}.
    Set $c = \sqrt{2 + \sqrt{2}}$.
    
    Suppose first that $|V(T_k)| \leq c\sqrt{p}$. Let $X = V(T) \setminus V(T_k)$. Note that $X$ clearly satisfies Properties (i) and (iii). Moreover, since $T\langle X\rangle$ is connected and has $\alpha + 1$ vertices, $|E(T\langle X\rangle)| = \alpha$. Thus,

    $$|E(T\langle X\rangle)| = \alpha \geq p - |V(T_k)| \geq p - c\sqrt{p},$$
    and $X$ also satisfies Property (ii).
   
    Henceforth, we may assume $|V(T_k)| > c\sqrt{p}$. 
    Since $|V(T_k)| \leq \frac{|V(T)|-1}{k}$, this implies that $p > (k - 1){c\sqrt{p}}$. We will now define a set $X' \subseteq V(T_k)$ satisfying the following properties.

    \begin{enumerate}[label=(\alph*)]
        \item $|X'| \leq p - \alpha$,
        \item $T_k\langle X'\rangle$ has at least $p - \alpha - c\sqrt{p - \alpha}$ edges, and
        \item every non-trivial connected component in $T_k \setminus E(T_k\langle X'\rangle)$ contains $r_k$.
    \end{enumerate}
    
    If $p - \alpha \leq 3$, then set $X' = \{r_k\}$. It is easy to check that Properties (a) to~(c) hold since $T_k\langle X'\rangle$ has no edges. Otherwise,  
    by the induction hypothesis applied to $p - \alpha$ 
    $T_k$, and $r_k$, there exists $X' \subseteq V(T_k)$ satisfying Properties~(a) to~(c).
    
    Let $X = X' \cup 
    (\bigcup_{i=1}^{k-1} V(T_i))$. 
    Note that $|X| = |X'| + \alpha \leq p - \alpha + \alpha \leq p$. So Property (i) is satisfied. Moreover, since $r$ and $r_k$ are adjacent, Property (iii) is also satisfied.
    It suffices to show Property (ii). Note that 
    \begin{align*}
         |E(T\langle X\rangle)| &\geq |E(T_k\langle X'\rangle)| + E(T\langle \textstyle\bigcup_{i=1}^{k-1} V(T_i)\rangle)| \\ 
         &\geq p - \alpha - c\sqrt{p - \alpha} + \alpha - (k - 1) \\
         &= p - c\sqrt{p - \alpha} - (k - 1)
    \end{align*}
Since  $\alpha \geq \frac{p}{2}$ and $p > (k - 1)c\sqrt{p}$, it follows that
\begin{align*}
        |E(T\langle X\rangle)| &\geq p - c\sqrt{\frac{p}{2}}- \frac{p}{c\sqrt{p}}\\
        &= p - \frac{c\sqrt{2p}}{2} - \frac{\sqrt{p}}{c}\\
        &= p - c\sqrt{p}\left(\frac{\sqrt{2}}{2} + \frac{1}{c^2}\right)
\end{align*}
Since $c = \sqrt{2 + \sqrt{2}}$, it follows that 
\begin{align*}
\frac{\sqrt{2}}{2} + \frac{1}{c^2} &= \frac{\sqrt{2}}{2} + \frac{1}{2 + \sqrt{2}} \\
    &= \frac{\sqrt{2}}{2} + \frac{2 - \sqrt{2}}{2}\\
    &= 1
\end{align*}
Thus, $|E(T\langle X\rangle)| \geq p - c\sqrt{p}$. This finishes the proof. 
\end{proof}

\begin{theorem}\label{thm:conv-tree}
    Let $p \geq 4$ be an integer. $\conv^{\leq p}_{\cal F}(n) \leq \left\lceil\frac{n-1}{p- c\sqrt{p}}\right\rceil$ with $c = \sqrt{2 + \sqrt{2}}$.
\end{theorem}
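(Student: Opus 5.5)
We argue by induction on $n$, using Lemma~\ref{lem:set-tree-sqrtp} to peel off, at each step, one inverted set that reverses many edges. First, it suffices to treat trees rather than forests. If a forest has components $C_1,\dots,C_m$, we can add $m-1$ edges joining them into a tree $T$ on the same $n$ vertices. Reversing all edges of $T$ with a family of $(\leq p)$-sets also reverses all edges of the forest, because the forest is a spanning subgraph of $T$ and an inversion of $X$ reverses exactly the edges with both endvertices in $X$ in either graph. So $\conv^{\leq p}$ of the forest is at most $\conv^{\leq p}(T)$, and it suffices to prove, for every tree $T$ of order $n$, that $\conv^{\leq p}(T)\leq \lceil (n-1)/(p-c\sqrt{p})\rceil$.

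The base case is $n\leq p$. Here one inversion of $V(T)$ reverses every edge, so $\conv^{\leq p}(T)\leq 1$. This is at most the claimed bound whenever $n\geq 2$, and $n=1$ is trivial.

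For the inductive step, take $n>p$ and root $T$ at an arbitrary vertex $r$. Lemma~\ref{lem:set-tree-sqrtp} gives a set $X$ with $|X|\leq p$ such that $T\langle X\rangle$ has at least $p-c\sqrt{p}$ edges, and such that removing $E(T\langle X\rangle)$ leaves at most one non-trivial component, which contains $r$. Call this component $T'$. Then $T'$ is a tree, and it is edge-disjoint from $T\langle X\rangle$. Since $T$ is a tree and $T\langle X\rangle$ is a forest, $E(T)$ is exactly the disjoint union of $E(T')$ and $E(T\langle X\rangle)$. The order of $T'$ is $n'=|E(T')|+1\leq n-(p-c\sqrt{p})$.

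The key point is that the two pieces can be reversed independently. By induction, a family $\mathcal{X}'$ of at most $\lceil (n'-1)/(p-c\sqrt p)\rceil$ subsets of $V(T')$ of size at most $p$ reverses every edge of $T'$. Each such set reverses no other edge of $T$, because $T'$ is an induced subtree of $T$: an edge of $T$ with both ends in $V(T')$ would close a cycle with $T'$ unless it already lies in $T'$. Inverting $X$ additionally reverses exactly $E(T\langle X\rangle)$. Hence $\mathcal{X}'\cup\{X\}$ reverses every edge of $T$ exactly once, and it has size at most
\[
\left\lceil \frac{n-1-(p-c\sqrt p)}{p-c\sqrt p}\right\rceil+1=\left\lceil\frac{n-1}{p-c\sqrt p}\right\rceil .
\]

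The main obstacle is the bookkeeping around the number of edges of $T\langle X\rangle$. The lemma guarantees at least $p-c\sqrt p$ edges, which need not be an integer, but since $|E(T\langle X\rangle)|$ is an integer, the inequality $|E(T')|\leq n-1-(p-c\sqrt p)$ still holds, and that is all the computation uses. We also need $p-c\sqrt p>0$ for the bound to make sense. Since $c^2=2+\sqrt2\approx 3.41$, this holds exactly when $p>c^2$, that is, for $p\geq 4$. Finally, we must confirm that the lemma's hypothesis $|V(T)|\geq p$ is met in the inductive step, which it is since $n>p$.
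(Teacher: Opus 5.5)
Your proof is correct and follows the paper's argument essentially verbatim: induction on $n$, one application of Lemma~\ref{lem:set-tree-sqrtp} to invert a set $X$ covering at least $p-c\sqrt{p}$ edges, then induction on the remaining subtree $T'$. You also make explicit two points the paper leaves implicit: the reduction from forests to trees, and the fact that $T'$ is an induced subtree, so inversions inside $V(T')$ do not disturb the edges of $T\langle X\rangle$.
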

\begin{proof}
We prove the result by induction on $n$, the result holding trivially if $n\leq p$.

Assume now that $n\geq p$, and let $T$ be a tree on $n$ vertices.
By Lemma—\ref{lem:set-tree-sqrtp}, there exists a subset $X$ of $V(T)$ which satisfies the properties (i)-(iii) of this lemma.
Property (i) asserts that $|X|\leq p$, so it can be inverted.
Now $\Inv(T;X)$ disagrees with the converse of $T$ on a $E(T)\setminus E(T\langle X\rangle)$. By Property (iii), this set of edges induces a forest with unique connected component $T'$.
Hence $\conv^{\leq p}(T) \leq 1 + \conv^{\leq p}(T')$.
But by Property~(ii), $|E(T')| \leq |E(T)| - (p- c\sqrt{p})$.
Thus, by the induction hypothesis, $\conv^{\leq p}(T')\leq \left\lceil\frac{n-1 - (p- c\sqrt{p}}{p- c\sqrt{p}}\right\rceil = \left\lceil\frac{n-1}{p- c\sqrt{p}}\right\rceil -1$.
Hence $\conv^{\leq p}(T) \leq \left\lceil\frac{n-1}{p- c\sqrt{p}}\right\rceil$.
\end{proof}

This theorem and Lemma~\ref{lem:t'andt} directly imply the
assertion (iv) of Theorem~\ref{thm:trees}.

\section{\texorpdfstring{$k$}{k}-degenerate graphs}\label{sec:k-deg}
%%%%%%%%%%%%%%%%%%%%%%%%%%%%%%%%%%%%%%%%%%%%%%%%%%%%%%%%%%%%%%%%%%

Let $G$ be a graph.
It is {\bf $k$-degenerate} if it has a {\bf $k$-degenerate ordering}, that is, an ordering  $(v_1, \dots ,v_n)$ of $V(G)$ such that
$v_i$ has at most $k$ neighbours in $\{v_{i+1}, \dots , v_n\}$ for all $i\in [n-1]$.

\begin{lemma}\label{lem:vc}
Let $G$ be a $k$-degenerate graph and let $p \geq k + 1$ be an integer. 
Let $(v_1, \ldots, v_n)$ be a $k$-degenerate ordering of $V(G)$. If there exists $\ell \in [n]$ such that
$\{v_1, \ldots, v_\ell\}$ is a vertex-cover of $G$, then $\id^{\leq p}(G) \leq \ell$.
\end{lemma}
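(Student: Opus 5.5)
We argue by induction on $\ell$, processing the vertices $v_1, \dots, v_\ell$ in order and spending one inversion per vertex. Fix two orientations $\vec{G}_1$ and $\vec{G}_2$ of $G$. Since $\{v_1,\dots,v_\ell\}$ is a vertex cover, every edge of $G$ has an endvertex $v_i$ with $i\le \ell$. Assign each edge to the endvertex of smallest index. Then the edges assigned to $v_i$ are exactly those joining $v_i$ to vertices of $\{v_{i+1},\dots,v_n\}$. By the $k$-degeneracy of the ordering there are at most $k$ such edges, so they form a star $S_i$ of order at most $k+1\le p$. Every edge lies in exactly one $S_i$, since an edge $v_av_b$ with $a<b$ lies in $S_a$ and $a\le\ell$ because the $v_j$ with $j\le \ell$ form a cover.

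The key point is to handle the stars in the order $S_1, S_2, \dots, S_\ell$ and to choose each inversion adaptively. Suppose the current orientation $\vec{D}_{i-1}$ agrees with $\vec{G}_2$ on all edges of $S_1\cup\dots\cup S_{i-1}$. Let $X_i$ consist of $v_i$ together with those neighbours $w\in\{v_{i+1},\dots,v_n\}$ of $v_i$ such that $\vec{D}_{i-1}$ and $\vec{G}_2$ disagree on $v_iw$. Then $|X_i|\le k+1\le p$. Inverting $X_i$ reverses exactly the edges of $G\langle X_i\rangle$. These edges are the disagreeing star edges at $v_i$, plus possibly edges between two vertices of $X_i\setminus\{v_i\}$. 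Every such extra edge has both endvertices of index larger than $i$, so it lies in some $S_j$ with $j>i$ and is not yet fixed. Hence no edge of $S_1\cup\dots\cup S_{i-1}$ is touched, since every such edge has an endvertex of index smaller than $i$, and such a vertex is not in $X_i$. After the inversion, all of $S_i$ agrees with $\vec{G}_2$.

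After $\ell$ steps the orientation agrees with $\vec{G}_2$ on all edges, so $\id^{\leq p}(G)\le \ell$.

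The main subtlety is that the sets $X_i$ need not induce stars: inverting them may flip edges among later vertices. A naive use of Proposition~\ref{prop:H+I} fails for this reason, because the stars are not induced. The fix is to make the choice of $X_i$ depend on the current orientation rather than on $E_{\neq}$, and to process the vertices in increasing index. Any collateral flips then land only on edges that will be handled later.
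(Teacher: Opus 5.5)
Your proof is correct and is essentially the paper's argument unwound: the paper inducts by applying Corollary~\ref{cor:G-v}~(i) to $v_1$, whose degree is at most $k\leq p-1$ and so costs one inversion, and then recurses on $G-v_1$ with the ordering $(v_2,\dots,v_n)$ and vertex cover $\{v_2,\dots,v_\ell\}$. Your remark that Proposition~\ref{prop:H+I} fails is not quite right: it only requires one piece to be induced (here $G-v_1$, not the star), and its proof already contains exactly the adaptive, in-order choice you describe.
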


\begin{proof}
The results follows directly from an easy induction using Corollary~\ref{cor:G-v} and the fact that the $(\leq p)$-inversion diameter of an edgeless is $0$.     
\end{proof}

\begin{corollary}\label{cor:procedure-greedy}
Let $G$ be a $k$-degenerate graph. 
For any $p\geq k+1$, $\id^{\leq p}(G) \leq |V(G)| -1$.
\end{corollary}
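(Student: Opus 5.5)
The plan is to apply Lemma~\ref{lem:vc} with $\ell = n-1$, where $n = |V(G)|$. Take any $k$-degenerate ordering $(v_1, \dots, v_n)$ of $V(G)$, which exists by the definition of $k$-degeneracy. The set $\{v_1, \dots, v_{n-1}\}$ omits only the single vertex $v_n$. Every edge of $G$ has two distinct endvertices, so at least one of them differs from $v_n$ and hence lies in $\{v_1,\dots,v_{n-1}\}$. This set is therefore a vertex-cover of $G$. Since $p\geq k+1$, Lemma~\ref{lem:vc} yields $\id^{\leq p}(G) \leq n-1 = |V(G)|-1$.

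Two degenerate cases need a word. If $n=1$, then $G$ is edgeless and $\id^{\leq p}(G)=0=|V(G)|-1$ directly, so we may assume $n\geq 2$, which ensures $\ell = n-1 \in [n]$. Also, in case Lemma~\ref{lem:vc}'s proof is viewed as implicit, I would briefly unfold it. Delete $v_1, v_2, \dots, v_{\ell}$ one at a time. When $v_i$ is deleted from $G-\{v_1,\dots,v_{i-1}\}$, its degree there is at most $k \leq p-1$, because its remaining neighbours all lie in $\{v_{i+1},\dots,v_n\}$. Hence Corollary~\ref{cor:G-v}~(i) costs at most $\lceil d(v_i)/(p-1)\rceil \leq 1$ inversion per step. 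After $\ell$ deletions the remaining graph is edgeless, since the deleted vertices form a vertex cover, so its diameter is $0$.

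There is no real obstacle. The only point to check is that a $k$-degenerate ordering really bounds the degree of each $v_i$ in the current graph by $k$, since its earlier neighbours have already been removed. That is exactly why the condition $p\geq k+1$ suffices for each step to cost a single inversion.
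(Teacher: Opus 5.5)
Your proposal is correct and follows the paper's route: the corollary is stated there as an immediate consequence of Lemma~\ref{lem:vc} with $\ell = |V(G)|-1$, since all vertices of a $k$-degenerate ordering except the last form a vertex cover. Your unfolding of Lemma~\ref{lem:vc} is also the intended argument: delete the vertices in order and apply Corollary~\ref{cor:G-v}~(i), where each step costs at most one inversion because the current degree is at most $k\leq p-1$.
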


\begin{proposition}
    For any positive integer $n$, there exists a $2$-degenerate graph $G^2_n$ of order $n$ such that $\id^{\leq 3}(G^2_n) = n -1$.
    Moreover if $n$ is even $\conv^{\leq 3}(G^2_n) = n -1$, and if $n$ is odd $\conv^{\leq 3}(G^2_n) = n -2$.
\end{proposition}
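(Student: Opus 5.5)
We take $G^2_n$ to be the \emph{book graph} $K_2\vee \overline{K_{n-2}}$. Its vertices are $u,v,w_1,\dots,w_{n-2}$ and its edges are the spine $uv$ together with the pages $uw_i, vw_i$ for $i\in[n-2]$. For very small $n$ (say $n\le 3$) we check the statement by hand, or replace the graph by a suitable small example. The ordering $(w_1,\dots,w_{n-2},u,v)$ is a $2$-degenerate ordering, so Corollary~\ref{cor:procedure-greedy} gives $\id^{\leq 3}(G^2_n)\le n-1$, and hence $\conv^{\leq 3}(G^2_n)\le n-1$. For $n$ odd, the converse can be achieved with $n-2$ sets: invert each triangle $\{u,v,w_i\}$ once. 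This reverses every page edge once, and reverses $uv$ exactly $n-2$ times, which is odd. So only lower bounds remain.

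For the lower bounds we use a weighting: give each page edge weight $1/2$ and the spine $uv$ weight $0$. We then list the $(\le 3)$-sets that contain at least one edge. They are: a single edge; a triangle $\{u,v,w_i\}$, which contains the page edges $uw_i, vw_i$; a star $\{u,w_i,w_j\}$ or $\{v,w_i,w_j\}$, which contains two page edges; and a set containing exactly one page edge. In every case the set contains page edges of total weight at most $1$. Suppose the disagreement set $E_{\neq}$ contains all $2(n-2)$ page edges. Each of them must be reversed at least once, so every valid family has at least $n-2$ members. Moreover, if it has exactly $n-2$ members, then every member has weight exactly $1$ and every page edge is covered exactly once.

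Next comes the parity step, which is the crux. Consider a tight family of $n-2$ sets. Every member is either a triangle $\{u,v,w_i\}$ or a $2$-star centred at $u$ or at $v$. Let $t$ be the number of triangles and $a$ the number of $u$-stars. Counting the page edges at $u$, each covered exactly once, gives $2a+t=n-2$, so $t\equiv n \pmod 2$. The spine $uv$ is reversed exactly $t$ times, since only the triangles contain it.

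For the converse, $uv$ must be reversed, so $t$ must be odd. When $n$ is even this is impossible, so $\conv^{\leq 3}(G^2_n)\ge n-1$, which also gives $\id^{\leq 3}(G^2_n)= n-1$. When $n$ is odd we get $\conv^{\leq 3}(G^2_n)\ge n-2$, which matches the construction above. To get $\id^{\leq 3}(G^2_n)\ge n-1$ for odd $n$, take two orientations that disagree on every page edge but agree on $uv$. Then $t$ must be even, while $t\equiv n$ is odd, which is again a contradiction.

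The main obstacle is making sure the enumeration of $(\le3)$-sets is complete. The key point is that the stars $\{u,w_i,w_j\}$ reverse two edges, so a naive ``only triangles reverse more than one edge'' argument fails. The weighting argument above handles this correctly.
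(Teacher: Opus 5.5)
Your proof is correct and takes essentially the paper's route. It uses the same book graph $K_2\vee\overline{K_{n-2}}$, the same observation that every $(\leq 3)$-set reverses at most two page edges, and the same identification of tight families as triangles $\{u,v,w_i\}$ and $2$-stars. The parity contradiction on the spine $uv$ is also the same: you get $t\equiv n \pmod 2$ by counting page edges at $u$, while the paper notes that the $w_i$ lying in stars come in pairs. You additionally make explicit the two upper bounds the paper leaves implicit, namely $\id^{\leq 3}\leq n-1$ via Corollary~\ref{cor:procedure-greedy} and $\conv^{\leq 3}\leq n-2$ for odd $n$ by inverting each triangle once.

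One small remark: your general argument already covers $n=2,3$, so no hand check is needed there. For $n=1$ no hand check can work, since $\conv^{\leq 3}=0\neq n-2$. The statement should therefore be read for $n\geq 2$, which is also where the paper's construction starts.
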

\begin{proof}

For $n \geq 2$, let $G^2_n$ be the graph obtained from $K_2$ by adding a set $X$ of $n-2$ vertices and all the edges between $X$ and $V(K_2)=\{u_1,u_2\}$. It is clearly $2$-degenerate.

Consider two orientations $\vec{G_1}$ and $\vec{G_2}$ of $G^2_n$ that disagrees on all edges of $G$ except on the edge of $u_1u_2$ when $n$ is odd.
There are $2(n-2)$ edges between $X$ and $V(K_2)$ and a $(\leq 3)$-inversion reverses at most two of them. Hence at least $n-2$ inversions are required to transform  $\vec{G}_1$ into $\vec{G}_2$.

Assume for a contradiction that there is a family of $n-2$ $(\leq 3)$-sets whose inversions transform  $\vec{G_1}$ into $\vec{G_2}$.
Then the inversion of each of those sets must reverse exactly two edges between $X$ and $K_2$ which are not reversed by the other sets. Thus the sets must be of the form $\{u_1, u_2, x\}$ for $x\in X$, or $\{u_j, x, x'\}$ for $j\in [2]$ and $x, x'\in X$.
There are two kinds of vertices $x$ of $X$: those of $X_1$ who belongs to one set of the form $\{u_1, u_2, x\}$, and those of $X_2$ who belongs to a set of the form $\{u_1, x, x'\}$ and one of the form $\{u_1, x, x"\}$.
Note that they is an even number of vertices in $X_2$ as each set of the from $\{u_1, x, x'\}$ contains two of them.
Hence $|X_1|$ has the same parity as $|X|$ and so as $n$.
So, if $n$ is even then the edge $u_1u_2$ is not reversed, and if $n$ is odd then the edge $u_1u_2$ is reversed. In both cases, its orientation disagrees with $\vec{G}_2$, a contradiction.

Hence $\vec{G_1}$ and $\vec{G_2}$ are at distance at least $n-1$ in $\mathcal{I}^{\leq 3}(G^2_n)$. This gives the results.
\end{proof}

\section{\texorpdfstring{$(\leq p)$}{(<=p)}-inversion diameter of planar graphs}\label{sec:planar}
%%%%%%%%%%%%%%%%%%%%%%%%%%%%%%%%%%%%%%%%%%%%%%%%%%

\subsection{Upper bounds on \texorpdfstring{$\id^{\leq p}_{\cal P}(n)$}{id(<= p)\_P(n)}}
%%%%%%%%%%%%%%%%%%%%%%%%%%%%%%%%%%%%%%%%%%%%%%%%%%%%%%%%

A planar graph of order $n$ has at most $3n-6$ edges.
Thus, by Theorem~\ref{thm:uppergen},
$\id^{\leq p}_{\cal P}(n) \leq \left\lceil\frac{3n-6}{\lfloor p/2\rfloor}\right \rceil + \frac{1}{2}p^2$. 
For small values of $p$, since every planar graph is $5$-degenerate, better upper bounds are given by Equation~\eqref{eq:m/p-1}: 
$\id^{\leq 3}_{\cal P}(n) \leq 2n - \frac{7}{2}$, $\id^{\leq 4}_{\cal P}(n) \leq \frac{5}{3}n - \frac{8}{3}$, and $\id^{\leq 5}_{\cal P}(n) \leq \frac{3}{2}n - \frac{9}{4}$.

In this section, we first slightly improve on the general upper bound given by Theorem~\ref{thm:uppergen}.  We then improve on the upper bounds on $\id^{\leq p}_{\cal P}(n)$ for $p=3,4,5$.

\medskip

It is known that if a planar graph contains a matching of size $k$, then it contains an induced matching of size $k/4$ (see~\cite{kanj2011induced}).

\begin{theorem}
  Let $p \geq 2$ be an integer and let $G$ be a planar graph. Then, $\id^{\leq p}(G) \leq \left\lceil\frac{|E(G)|}{\lfloor p/2\rfloor}\right \rceil + 8\lfloor p/2\rfloor - 8$. 
\end{theorem}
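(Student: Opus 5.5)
Set $q=\lfloor p/2\rfloor$ and take a minimum counterexample $G$ (with respect to the number of edges) to the statement with $\theta_p = 8q-8$. Since subgraphs of planar graphs are planar, every proper subgraph $G'$ of $G$ satisfies $\id^{\leq p}(G') \leq \left\lceil |E(G')|/q\right\rceil + 8q-8$, so Lemma~\ref{lem:min-ce} applies to $G$. It gives $\Delta(G)\leq q-1$ and shows that $G$ has no induced matching of size $q$.

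The next step converts the absence of a large induced matching into a bound on an ordinary matching. By the result of Kanj et al.~\cite{kanj2011induced}, a planar graph with a matching of size $k$ has an induced matching of size $k/4$. Hence every matching of $G$ has size less than $4q$, so $\nu(G)\leq 4q-1$. I would double check the exact form of the cited bound (floor or ceiling of $k/4$) so that the constant comes out right; I expect $\nu(G)\leq 4q-4$ to be the relevant form, or whatever the precise statement gives.

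Then bound $|E(G)|$ using both the degree bound and the matching bound. Let $M$ be a maximum matching and $V(M)$ its vertex set, with $|V(M)|=2\nu(G)$. Every edge has an endpoint in $V(M)$, and every vertex has degree at most $q-1$. This gives $|E(G)|\leq 2\nu(G)(q-1)$, and Vizing-type bounds could refine it further, although that is not needed here.

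Finally, cover the edges cheaply: every edge can be reversed individually, or we can use a vertex cover. By Corollary~\ref{cor:G-v}(i), deleting vertices of the vertex cover $V(M)$ one at a time costs $\lceil d(v)/(p-1)\rceil = 1$ inversion each, since $d(v)\leq q-1\leq p-1$. This gives $\id^{\leq p}(G)\leq |V(M)| = 2\nu(G)\leq 8q-8$ provided $\nu(G)\leq 4q-4$, which contradicts $G$ being a counterexample.

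The main obstacle is exactly this constant. With only $\nu\leq 4q-1$ we get $8q-2$, so we must save 6. I would try two fixes first. One is to use that a maximum matching whose vertex set covers all edges can be taken with one endpoint per edge of $M$ forming a near-cover (Gallai–Edmonds / König-type argument), which would halve the count. The other is to use that each star inversion of a vertex of degree at most $q-1$ together with another far-away vertex can be combined into one $(\leq p)$-set, as in Claim~\ref{claim:2sommet}, which also reduces the count. Either should comfortably bring the total to at most $8q-8$.
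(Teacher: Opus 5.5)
Your skeleton is the paper's proof: a minimal counterexample, Lemma~\ref{lem:min-ce} giving $\Delta(G)\le q-1$ and no induced matching of size $q$, the planar matching-to-induced-matching bound, and one inversion per vertex of the cover $V(M)$. Your direct use of Corollary~\ref{cor:G-v}(i) is exactly what Lemma~\ref{lem:vc} packages. However, as written you only prove $\nu(G)\le 4q-1$, hence $\id^{\leq p}(G)\le 8q-2$, and the constant $8q-8$ is left as an expectation.

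The missing step is a one-line integrality argument, and with it the ``main obstacle'' disappears. An induced matching of size at least $k/4$ has size at least $\lceil k/4\rceil$. The underlying argument gives exactly this: four-colour the planar graph $G\langle V(M)\rangle/M$ obtained by contracting the edges of $M$. Its independent sets are precisely the induced submatchings of $M$, and some colour class has at least $\lceil k/4\rceil$ vertices. Since $G$ has no induced matching of size $q$, you get $\lceil \nu(G)/4\rceil\le q-1$, i.e.\ $\nu(G)\le 4q-4$. So $|V(M)|\le 8q-8$, and your deletion argument concludes exactly as in the paper.

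You should discard the two repairs you sketch. The first is false. Covering all edges by one endpoint of each edge of $M$ is K\H{o}nig's theorem, which needs bipartiteness, and the graphs left after Lemma~\ref{lem:min-ce} may contain triangles when $q\ge 3$. For example, a disjoint union of $t\le q-1$ triangles is planar, has $\Delta=2\le q-1$ and no induced matching of size $q$, yet $\nu=t$ while every vertex cover has $2t$ vertices. The second repair, merging the stars of two far-apart cover vertices into a single $(\leq p)$-set as in Claim~\ref{claim:2sommet}, is not carried out. Nothing in your argument guarantees enough pairwise distant cover vertices to save $6$ inversions. Neither repair is needed once the rounding is done.
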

\begin{proof}
    Let $q = \lfloor p/2\rfloor$.
    Let $\vec{G}_1$ and $\vec{G}_2$ be two orientations of $G$.
    Let $G$ be a minimum counterexample to the statement and let $M$ be a maximum matching of $G$. By Lemma~\ref{lem:min-ce}, $\Delta(G)\leq q-1$ and $G$ has no induced matching of size $q$. Thus $|M| \leq 4(q - 1)$. Let $A$ be the set of vertices covered by $M$. Then $A$ is a vertex-cover of $G$ and $|A| \leq 8(q-1)$. 
    Let $(v_1, \ldots, v_n)$ be an ordering of $V(G)$ such that $\{v_1, \ldots, v_{|A|}\} = A$. Since $\Delta(G) \leq q - 1$, $v_1, \ldots, v_n$ is a $(q-1)$-degenerate ordering of $G$. By Lemma~\ref{lem:vc}, $\id^{\leq p}(G) \leq 8q - 8$, a contradiction.
\end{proof}

\begin{corollary}
    $\id^{\leq p}_{\cal P}(n) \leq \left\lceil\frac{3n-6}{\lfloor p/2\rfloor}\right \rceil + 8\lfloor p/2\rfloor - 8$
\end{corollary}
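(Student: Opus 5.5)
This corollary follows from the preceding theorem by monotonicity in the number of edges. Let $G$ be a planar graph of order $n$ and set $q=\lfloor p/2\rfloor$. By the preceding theorem,
\[
\id^{\leq p}(G) \leq \left\lceil\frac{|E(G)|}{q}\right\rceil + 8q - 8 .
\]
The right-hand side is nondecreasing in $|E(G)|$. It therefore suffices to bound $|E(G)|$ for planar graphs of order $n$.

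For $n\geq 3$, Euler's formula gives $|E(G)|\leq 3n-6$. Substituting this yields $\id^{\leq p}(G)\leq \left\lceil\frac{3n-6}{q}\right\rceil + 8q-8$. Since $G$ was an arbitrary planar graph of order $n$, taking the maximum over all such $G$ gives the bound on $\id^{\leq p}_{\cal P}(n)$.

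The only delicate point is the degenerate range $n\leq 2$, where $3n-6<|E(G)|$ is possible (for instance $K_2$ has $1>0$ edges). Here I would argue directly. If $n\leq 2$, then $G$ has at most one edge, so $\id^{\leq p}(G)\leq 1$. This case needs $p\geq 2$, so that $q\geq 1$. For $q\geq 2$ the right-hand side is at least $\lceil -6/q\rceil + 8 \geq 5$, since $\lceil -6/q\rceil \geq -3$. For $q=1$ the formula gives $3n-6$, which is negative at $n\leq 2$. So the statement must be read for $n\geq 3$, or with the convention that the Euler bound applies; I would state this explicitly.

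No further obstacle is expected: the substance lies entirely in the preceding theorem.
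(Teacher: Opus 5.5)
Your proposal is correct and matches the paper, which obtains the corollary directly from the preceding theorem together with $|E(G)|\leq 3n-6$ and the monotonicity of $\lceil m/q\rceil$ in $m$. Your small-order caveat is also right: for $\lfloor p/2\rfloor=1$ and $n=2$ the right-hand side equals $0$ (zero, not negative as you wrote), while $\id^{\leq p}(K_2)=1$, so the statement has to be read for $n\geq 3$, as the paper implicitly assumes.
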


\begin{theorem}
If $G$ is a planar graph of order $n$, then 
$\id^{\leq 3}(G) \leq \frac{11}{6}n - \frac{8}{3}$ and $\id^{\leq 5}(G) \leq \id^{\leq 4}(G) \leq \frac{4}{3}n + \frac{10}{3}$
\end{theorem}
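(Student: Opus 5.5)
We argue by induction on $n$, using the vertex-deletion bound of Corollary~\ref{cor:G-v} together with reducible configurations coming from discharging on planar graphs. The bound $\id^{\leq 5}(G)\leq \id^{\leq 4}(G)$ is immediate, since every $(\leq 4)$-inversion is a $(\leq 5)$-inversion. For $p=4$, deleting a vertex $v$ costs at most $\lceil d(v)/3\rceil$ inversions by Corollary~\ref{cor:G-v}~(i), and at most $\lfloor d(v)/2\rfloor$ when $d(v)\geq 2$ by Corollary~\ref{cor:G-v}~(ii). For $p=3$ the costs are $\lceil d(v)/2\rceil$ and $d(v)$ respectively, so (i) is the relevant bound. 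To reach the slopes $4/3$ and $11/6$, we need deleting a vertex to cost on average at most $4/3$ (respectively $11/6$) per vertex removed. A single low-degree vertex is fine: for $p=4$, $\lceil d/3\rceil\leq 1$ if $d\leq 3$, and $\lceil 5/3\rceil=2$. The bad case is a $5$-vertex, which costs $2>4/3$. So we instead remove small sets $S$ of vertices at once and bound $\id^{\leq p}(G)\leq \id^{\leq p}(G-S)+c(S)$ with $c(S)\leq \frac43|S|$ (respectively $\frac{11}{6}|S|$). To do this we apply Proposition~\ref{prop:H+I} with $H$ the edges incident to $S$ and $I=G-S$, and decompose $H$ into few pieces, each of diameter $1$ (a star of order $\leq p$, or an induced subgraph on $\leq p$ vertices whose disagreeing edges can be flipped at once).

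Concretely, for $p=4$, the key configurations are the following. Two adjacent vertices $u,v$ of degree at most $4$: the edges at $\{u,v\}$ form a double star with at most $7$ edges, which should be coverable by $\lfloor 8/3\rfloor=2$ stars of order $4$ plus a small patch; we aim for cost $\leq 2$ or $3$ for $|S|=2$, checking $3\leq \frac83$ fails, so we need cost $2$ and will require total degree $\leq 6$. We also use a $5$-vertex whose neighbours can be grouped with a neighbour of low degree. Planar graphs have minimum degree at most $5$, and by a standard discharging argument (Euler's formula, charge $d(v)-6$ on vertices) every planar graph contains either a vertex of degree $\leq 3$, or an edge $uv$ with $d(u)+d(v)\leq 11$ (Kotzig-type light edge), or similar light structures. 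We will take the light-edge theorem (Borodin: every planar graph with $\delta\ge 3$ has an edge of weight $\leq 13$, and $\leq 11$ if $\delta\geq 4$) as the main tool. For $p=3$ the same scheme applies: a $\leq 3$-vertex costs $\leq 2\leq 11/6$ only on average, so we pair it with a cheaper deletion, or use a light edge of weight $\leq 11$, whose removal costs about $\lceil 10/2\rceil+$ small $\leq \frac{11}{3}$ for two vertices (edge $uv$ inverted together with neighbours, using $(\leq 3)$-sets $\{u,v,x\}$ covering paths of length $2$, as in Lemma~\ref{lem:decomp-P3}).

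The additive constants $-\frac83$ and $+\frac{10}{3}$ will come from the base cases: we stop the induction once $n$ is bounded, where Equation~\eqref{eq:m/p-1} or Theorem~\ref{thm:diam-upper} gives the needed bound. The main obstacle is the case analysis showing that every planar graph contains a configuration whose cost-to-size ratio is within the target. The hardest case is a $5$-regular-like neighbourhood for $p=4$, where all edges are between $5$- and $6$-vertices. There I would first try to exhibit a $5$-vertex $v$ with two nonadjacent neighbours of degree $5$ or $6$, such that a carefully chosen family of $(\leq 4)$-sets reverses all disagreeing edges at $\{v,u,w\}$ using at most $4$ inversions. This gives cost $4$ for $3$ vertices, exactly the ratio $4/3$. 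Failing that, I would fall back on using the $4$-vertex sets $\{v\}\cup$ disagreeing neighbours, where the interactions inside triangular faces are controlled by inducedness.
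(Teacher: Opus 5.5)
\paragraph{Gap: the reducible configurations.} Your proposal does not contain the core of the argument. You never give an unavoidable set of configurations $S$ together with proofs that $\id^{\leq p}(H_S)\leq \frac{4}{3}|S|$ (resp.\ $\frac{11}{6}|S|$), where $H_S$ is the graph of edges incident to $S$. The hardest case is left as what you ``would try''. Worse, the configurations you do name are not reducible.

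\emph{The case $p=3$.} Your light-edge estimate is wrong, since $\lceil 10/2\rceil=5>\frac{11}{3}$. Let $u,v$ be adjacent $5$-vertices with all $9$ edges of $H_{\{u,v\}}$ disagreeing. A $(\leq 3)$-set contains at most $3$ of these edges, and only sets $\{u,v,x\}$ contain $3$, all of them containing $uv$. So at least $4>\frac{11}{3}$ inversions are needed. Also, a $3$- or $4$-vertex alone already costs $2>\frac{11}{6}$.

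\emph{The case $p=4$.} A $4$-vertex also costs $2>\frac{4}{3}$. Two adjacent vertices $u,v$ of degree at least $4$ always cost at least $3>\frac{8}{3}$: each must lie in two inversions, so with only two inversions $uv$ is reversed twice. Hence on $4$-regular planar graphs none of your configurations occurs.

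\emph{Your fallback for $\delta=5$.} It fails as well. Take a $5$-connected triangulation (e.g.\ the dual of the truncated icosahedron). Let $v$ be a $5$-vertex with neighbours $u,a,w,b,c$ in cyclic order and $d(u)=d(w)=6$. Let exactly the $13$ edges leaving $S=\{u,v,w\}$ disagree. Every $(\leq 4)$-set contains at most $3$ of these edges, except $\{u,v,a,c\}$ and $\{v,w,a,b\}$. Those two contain $4$, but also the agreeing edge $uv$ (resp.\ $vw$), and both contain $va$. Now suppose four distinct inversions suffice.
\begin{itemize}
\item Using neither special set gives total coverage at most $12<13$.
\item Using exactly one of them forces each boundary edge to be covered exactly once. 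But its agreeing edge must be covered again, by a set that must then contain $3$ boundary edges. Such a set contains a common neighbour of the two endpoints, so it double-covers a boundary edge.
\item Using both forces $va$ to be covered at least $3$ times. Then the total coverage is at least $15>4+4+3+3$.
\end{itemize}
So $5>4$ inversions are needed. The configurations your discharging would need are therefore not the ones you list, and you supply no others.

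\paragraph{What the paper does instead.} The missing idea is to charge per disagreeing edge, globally, rather than per deleted vertex. The paper works directly on $E_{\neq}$ with a greedy procedure:
\begin{itemize}
\item it inverts triangles whose three edges lie in $E_{\neq}$;
\item then $4$-cycles in $E_{\neq}$, via two $3$-sets, so that the chord is reversed twice;
\item then pairs $xy,yz\in E_{\neq}$ with $xz\notin E(G)$, and, for $p=4$, pairs of disagreeing edges inducing $2K_2$;
\item finally single edges.
\end{itemize}
If $m_i$ edges are handled at step $i$, the cost is $\frac{m_1}{3}+\frac{m_2+m_3}{2}+m_4$. Planarity then gives three bounds. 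First, $m_1+m_2+m_3+m_4\leq 3n-6$. Second, $m_2+m_3+m_4\leq 2n-4$, because the disagreeing edges left after step~1 form a triangle-free planar graph. Third, $m_4\leq n$, because every component of the final leftover graph has at most one cycle; for $p=4$ one gets $m_4\leq 12$, because that graph has no matching of size $5$, by $K_5$-minor-freeness. Substituting yields exactly $\frac{11}{6}n-\frac{8}{3}$ and $\frac{4}{3}n+\frac{10}{3}$.
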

\begin{proof}
Let $G$ be a planar graph and let $p\in \{3,4\}$. 
Let $\vec{G}_1$ and $\vec{G}_2$ be two orientations of $G$.
We apply the following procedure.
\begin{enumerate}

\item First, as long as there is a $K_3$ with its three edges in $E_{\neq}$, we invert its vertex set. This reverses its three edges (which are removed from $E_{\neq}$).

 \item As long as there is a $4$-cycle $(v_1,v_2,v_3, v_4, v_1)$ with all its edges in $E_{\neq}$, we invert the sets $\{v_1, v_2, v_3\}$ and $\{v_3, v_4, v_1\}$.
 This reverses the four edges of the $4$-cycle (which are removed from $E_{\neq}$) and no other.

 \item As long as there are two edges
 $xy, yz \in E_{\neq}$ such that $xz\notin E(G)$, then we invert $\{x,y,z\}$.  This reverses the two edges $xy, yz$ (which are removed from $E_{\neq}$) without adding any new edges in $E_{\neq}$.
 \item [3+.] If $p=4$, then as long as there are two edges
 $wx, yz \in E_{\neq}$ such that $wy, wz, xy, xz \notin E(G)$, then we invert $\{w,x,y,z\}$.  This reverses the two edges $wx, yz$ (which are removed from $E_{\neq}$) without adding any new edges in $E_{\neq}$.

 \item Finally, we reverse the remaining edges of $E_{\neq}$ one by one.
\end{enumerate}

At Step 1, three edges are reversed per inversions; at Step 2, 3, and 3+, two edges (in average) are reversed per inversions; at Step 4, one edge is reversed per inversion.
For $i\in [4]$, let $E_i$ be the set of edges reversed at Step $i$, and set $m_i=|E_i|$.

The number of inversions of our procedure is
$N=m_1/3 + m_2/2 +m_3/2 + m_4$.

We have $m_1+m_2+m_3 + m_4 = |E_{\neq}| \leq |E(G)| \leq 3n-6$, because $G$ is planar.

Observe that after Step 1, the graph $(V(G),E_2\cup E_3 \cup E_4)$ has no triangle and is planar.
So it has at most $2n-4$ edges. Hence $m_2+m_3+m_4\leq 2n-4$.

Consider now the graph $H=(V(G),E_4)$.
\begin{itemize}
    \item It has no triangle, nor $4$-cycle, because of Step 1 and 2.
    \item The closed neighbourhood in $H$ of every vertex is a clique in $G$ for otherwise Step~3 would apply. Thus, as $G$ is planar and thus has no clique of size $5$, we get that $\Delta(H)\leq 3$.  
    \item It has no odd cycle, since every odd cycle of a planar graph contains two consecutive edges $xy, yz$ such that $xz\notin E(G)$, which should have been reversed at Step 3. 
    
    \item Let $C=(u_1, v_1, u_2, v_2, \dots , u_k, v_k, u_1)$ be an even cycle.
    Because Step 3 did not apply, $(u_1, u_2, \dots , u_k, u_1)$ and $(v_1, v_2, \dots , v_k, v_1)$ are cycles in $G$. Moreover, since Step~1 did not apply, no edge of those cycles is in $E_2\cup E_3\cup E_{4}$. Without loss of generality, the cycle
    $C'=(u_1, u_2, \dots , u_k, u_1)$ is outside $C$. Note that, in $H$, there is no path from $C$ to a vertex outside $C'$. Indeed, such a path would go through an edge $u_iw$ with $u_i\in V(C')$ and $w$ outside $C'$. But then 
    $u_iv_i$, $u_iw$ are edges in $E_4$ and $v_iw$ is not an edge since $u_i$ is inside $C'$ and $w$ is outside $C'$. This is impossible because such a pair of edges is reversed at Step~3.

    Therefore, there is no path between two cycles in $H$. Thus every component $J$ of $H$ has at most one cycle and so $|E(J)| \leq |V(J)|$. Hence $m_4 = |E(H)| \leq |V(H)| = n$.
\end{itemize}

Now, 
\begin{eqnarray*}
  N  & = & \frac{1}{3}m_1 + \frac{1}{2} (m_2 +m_3) + m_4 \\
     & = & \frac{1}{3} (m_1+m_2+m_3 +m_4) + \frac{1}{6}(m_2+m_3+m_4)  + \frac{1}{2}m_4 \\
     & \leq & \frac{1}{3} (3n-6) + \frac{1}{6}(2n-4)  + \frac{1}{2} n  \\
   & \leq & \frac{11}{6} n - \frac{8}{3}
\end{eqnarray*}
Thus $\id^{\leq 3}(G) \leq \frac{11}{6}n - \frac{8}{3}$.

Assume now that $p=4$. Then Step 3+ is also performed, giving more structure on $H$.

\begin{itemize}
    \item Every two no adjacent edges in $H$ are linked by an edge in $G$ as Step 3+ does not apply. Thus contracting in $G$ the edges of a matching in $H$ yields a clique. Since $G$ has no $K_5$-minor because it is planar, there is no matching of size $5$ in $H$. 

    Let $M$ be a maximum matching of $H$ and let $A$ be set the of vertices covered by $M$. Then $|M| \leq 4$, $|A| \leq 8$ and $H - A$ is a stable set. Let $uv \in M$. Towards a contradiction suppose that both $u$ and $v$ have one neighbour, say $w_1$ and $w_2$ respectively, in $V(H) \setminus A$. As $H$ is triangle-free (because of Step 1)  $w_1 \neq w_2$. Thus $M \setminus \{uv\} \cup \{w_1v, w_2u\}$ is a matching of $H$ bigger than $M$, a contradiction. Thus there are at most $|A|$ + $|M|$ vertices in $H$ with degree at least $1$. By the previous argument, for each connected component $J$ of $H$, $|E(J)|\leq |V(J)|$, which implies that $m_4=|E(H)| \leq |A| + |M| \leq 8 + 4 = 12$. 
    
\end{itemize}

Now, 
\begin{eqnarray*}
  N  & = & \frac{1}{3}m_1 + \frac{1}{2} (m_2 +m_3) + m_4 \\
     & = & \frac{1}{3} (m_1+m_2+m_3 +m_4) + \frac{1}{6}(m_2+m_3+m_4)  + \frac{1}{2}m_4 \\
     & \leq & \frac{1}{3} (3n-6) + \frac{1}{6}(2n-4)  + 6  \\
   & \leq & \frac{4}{3} n + \frac{10}{3}
\end{eqnarray*}
Thus $\id^{\leq 5}(G) \leq\id^{\leq 4}(G) \leq \frac{4}{3} n + \frac{10}{3}$.
  \end{proof}

\subsection{Lower bound on \texorpdfstring{$\conv^{\leq 3}_{\cal P}(n)$}{conv(<=3)\_P(n)}.}
%%%%%%%%%%%%%%%%%%%%%%%%%%%%%%%%%%%%%%%%%%%%%%%%%%%%%%%%

Every planar graph of order $n\geq 3$ has at most $3n-6$ edges.
Thus, for $p\geq 3$, an $(\leq p)$-inversion reverses at most $3p-6$ edges of a planar graph.
Hence for every maximal planar graph of order $n$, we have
 $\conv^{\leq p}(G) \geq \frac{3n-6}{3p-6}$,
 thus $\conv^{\leq p}_{\cal P}(n) \geq \frac{3n-6}{3p-6}$.

 The aim of this subsection is to improve on this bound.

For every graph $G$, we  denote the number of vertices with odd degree in $G$ by $n_o(G)$, or simply $n_o$ when $G$ is clear from the context.  

\begin{lemma}\label{lem:odd-degrees}
    Let $G$ be a graph of order $n$ with $n_o$ vertices of odd degree. Then, 
    $\conv^{\leq 3}(G) \geq \frac{|E(G)|}{3} + \frac{n_o}{6}$.
\end{lemma}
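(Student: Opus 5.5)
Fix a family $(X_i)_{i\in I}$ of $(\leq 3)$-sets whose inversion transforms an orientation $\vec{G}$ into its converse; we must show $|I|\geq \frac{|E(G)|}{3}+\frac{n_o}{6}$. Each edge of $G$ must be covered by an odd number of the sets, i.e.\ by at least one. For each $i$, let $E_i$ be the set of edges of $G$ with both endpoints in $X_i$. Since $|X_i|\leq 3$, $E_i$ has at most $3$ edges, and $|E_i|=3$ exactly when $X_i$ induces a triangle of $G$. We may assume every $E_i$ is nonempty, since otherwise $X_i$ can be discarded.

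The plan is a counting argument with a parity correction at odd-degree vertices. Write $a$ for the number of indices with $|E_i|=3$ (triangles) and $b$ for the number with $|E_i|\leq 2$. Since every edge is covered at least once,
\[
|E(G)| \leq \sum_{i\in I} |E_i| \leq 3a+2b = 3|I| - b .
\]
It therefore suffices to show $b\geq n_o/2$, because then $3|I|\geq |E(G)|+b\geq |E(G)|+n_o/2$.

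To get $b\geq n_o/2$, fix a vertex $v$. Each edge incident to $v$ is reversed an odd number of times. Summing over the edges at $v$, the total number of pairs (edge $e\ni v$, index $i$ with $e\in E_i$) is $\equiv d(v) \pmod 2$. Each triangle set containing $v$ contributes exactly $2$ to this count, which is even. Hence if $d(v)$ is odd, $v$ must lie in some $X_i$ with $|E_i|\leq 2$ that contributes an odd number (that is, $1$) of edges at $v$. Such an $X_i$ is either a single edge, in which case it can serve both of its endpoints, or a path on three vertices $xyz$, in which case only its two ends $x,z$ have odd contribution. In either case each non-triangle set serves at most two odd vertices. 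Therefore $2b\geq n_o$, which finishes the proof.

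The main subtlety is making sure the parity argument is airtight. The count at $v$ must be taken over all sets containing $v$, and sets with $E_i$ empty or disjoint from $v$ contribute $0$, so they do not affect the parity. The remaining step is then just the case check that a non-triangle set has at most two vertices of odd local degree in $E_i$. This holds because $E_i$ is a subgraph on at most $3$ vertices, so it has an even number of odd-degree vertices, and that number is at most $2$ unless $E_i$ is a triangle (whose vertices all have even degree anyway).
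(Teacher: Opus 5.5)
Your proof is correct, but it takes a different route from the paper.

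The paper double counts vertex--set incidences. A $(\leq 3)$-set contains at most two edges at any one of its vertices, so each vertex $v$ lies in at least $\lceil d(v)/2\rceil$ sets of the family. This gives $3|\mathcal{X}| \geq \sum_{v} \lceil d(v)/2\rceil = |E(G)| + n_o/2$ directly.

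You instead double count edge--set incidences. You note that only triangle sets cover three edges, which gives $3|I| \geq |E(G)| + b$. You then use the parity of the number of times each edge is covered to charge every odd-degree vertex to a non-triangle set in which it has local degree $1$, so $b \geq n_o/2$.

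The paper's argument is shorter. It uses only that every edge is covered at least once, so it lower-bounds any family of $(\leq 3)$-sets whose induced edges cover $E(G)$, not just inversion families. Your charging step genuinely needs the odd-coverage condition: two overlapping paths $\{e_1,e_2\}$, $\{e_2,e_3\}$ at a vertex of degree $3$ would leave it uncharged. In exchange, your argument gives finer information: at least $n_o/2$ of the inverted sets do not induce triangles.
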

\begin{proof}
Let $\vec{G}$ be an orientation of $G$ and $\cev{G}$ be the converse of $\vec{G}$.
Let $\mathcal{X}$ be a set of $(\leq 3)$-inversions that transforms $\vec{G}$ into
$\cev{G}$.
Since each $X \in \mathcal{X}$ contains at most two edges incident to vertex, each vertex $v$ is in at least $\lceil d(v)/2\rceil$ sets of $\mathcal{X}$.
Hence $3|\mathcal{X}| \geq \sum_{v\in V(G)} \lceil d(v)/2\rceil = \sum_{v\in V(G)} d(v)/2 + \frac{n_o}{2} = |E(G)| + \frac{n_o}{2}$. So $|\mathcal{X}| \geq \frac{|E(G)|}{3} + \frac{n_o}{6}$.
\end{proof}

\begin{proposition}\label{prop:triang-odd}
    For any positive integer $q$, there exists a plane triangulation on $4q$ vertices in which all vertices have odd degree.
\end{proposition}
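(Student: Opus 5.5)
The plan is an induction on $q$. The base case is $K_4$ and the inductive step inserts a fixed $4$-vertex gadget into a face. The gadget is chosen so that every old vertex gains an even number of new neighbours and every new vertex ends up with odd degree.

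For $q=1$, take $K_4$ drawn in the plane. It is a plane triangulation on $4$ vertices, and every vertex has degree $3$.

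For the inductive step, let $G$ be a plane triangulation on $4q$ vertices, all of odd degree. Pick any face, bounded by the triangle $xyz$.
\begin{itemize}
\item Inside this face, place a smaller triangle on three new vertices $a,b,c$.
\item Triangulate the annulus between $xyz$ and $abc$ with the six edges $xa, ya, yb, zb, zc, xc$. This gives the six triangular faces $xya$, $yba$, $yzb$, $zcb$, $zxc$, $xac$.
\item Place a fourth new vertex $d$ inside $abc$, adjacent to $a,b,c$. This gives the faces $abd$, $bcd$, $cad$.
\end{itemize}
The result is a plane graph with $4(q+1)$ vertices, all of whose faces are triangles. It is simple, since the new vertices are distinct and the only edges among old vertices are the old ones. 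Hence it is a plane triangulation.

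It remains to check the parities. Each of $x,y,z$ gains exactly two new neighbours: $x$ gains $a,c$; $y$ gains $a,b$; $z$ gains $b,c$. So their degrees stay odd, and all other old vertices are unaffected. Among the new vertices, $a$ is adjacent to $x,y,b,c,d$ and so has degree $5$; symmetrically $b$ and $c$ have degree $5$, and $d$ has degree $3$. Thus all degrees are odd, which completes the induction.

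The only point requiring care is designing the gadget so that the three outer vertices receive even increments while the new vertices get odd degrees. Once the annulus-plus-central-vertex configuration above is found, the verification is routine.
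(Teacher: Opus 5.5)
Your proof is correct and is essentially the paper's argument: the paper also inducts from $K_4$ by inserting into a facial triangle $u_1u_2u_3$ a copy of $K_4$ with outer face $v_1v_2v_3$, joined by the edges $u_iv_j$ for $i\neq j$. This is exactly your gadget, with $x,y,z$ playing $u_1,u_2,u_3$ and $b,c,a$ playing $v_1,v_2,v_3$; your explicit face list and degree count just spell out what the paper leaves as ``one easily checks''.
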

\begin{proof}
    We prove the result by induction on $q$, with $K_4$ being the desired graph for $q=1$.
    
    Assume that we have the desired graph $G$ for some $q$.
    Consider a $3$-face $u_1u_2u_3$ of $G$ and add inside it a disjoint $K_4$ with outer face $v_1v_2v_3$ and the edges $u_iv_j$ for all $i\neq j$.  
    One easily checks that the resulting graph is a plane triangulation in which all vertices have odd degree.
\end{proof}

Since a plane triangulation of order $n$ has $3n - 6$ edges, the previous proposition and Lemma~\ref{lem:odd-degrees} directly imply the following.

\begin{corollary}
    For every $n\equiv 0 \mod 4$, there is a planar graph $G$ of order $n$ such that
    $\conv^{\leq 3}(G) \geq \frac{7n}{6}-2$.
\end{corollary}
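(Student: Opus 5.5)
The plan is to combine Proposition~\ref{prop:triang-odd} with Lemma~\ref{lem:odd-degrees}; no further construction is needed.

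Let $n\equiv 0 \mod 4$ and write $n=4q$ with $q$ a positive integer. By Proposition~\ref{prop:triang-odd}, there is a plane triangulation $G$ on $4q=n$ vertices in which every vertex has odd degree, so $n_o(G)=n$. Since $n\geq 4\geq 3$, $G$ is a maximal planar graph and has exactly $3n-6$ edges.

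Applying Lemma~\ref{lem:odd-degrees} to $G$ then gives
\[
\conv^{\leq 3}(G) \geq \frac{|E(G)|}{3} + \frac{n_o(G)}{6} = \frac{3n-6}{3} + \frac{n}{6} = n-2+\frac{n}{6} = \frac{7n}{6}-2,
\]
which is the claimed bound.

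There is no real obstacle once the two earlier results are in hand; the only things to verify are the equality $|E(G)|=3n-6$ for triangulations and the fact that all $n$ vertices are counted in $n_o$.
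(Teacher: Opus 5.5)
Your proposal is correct and follows the paper's argument: take the all-odd-degree triangulation on $n=4q$ vertices from Proposition~\ref{prop:triang-odd}, use $|E(G)|=3n-6$ and $n_o(G)=n$, and plug into Lemma~\ref{lem:odd-degrees} to obtain $\frac{3n-6}{3}+\frac{n}{6}=\frac{7n}{6}-2$. Your arithmetic and hypotheses check out.
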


\medskip

Let $n$ be an integer with $n \geq 5$.
The {\bf double wheel} of order $n$, denoted by $DW_n$ is the graph obtained from a cycle of order $n-2$ by adding two vertices adjacent to all vertices of the cycles.
A double wheel is clearly planar. Moreover if $n>p\geq 4$, an induced subgraph on $p$ vertices has at most $3p-7$ edges.
Hence, for all $p\geq 4$,
$\conv^{\leq p}_{\cal P}(n) \geq \conv^{\leq p}(DW_n) \geq \frac{3n-6}{3p-7}$.

\begin{proposition}
    Let $p$ be an integer with $p \geq 3$.
    For every integer $n$ with $n \geq p+2$,
    \[
        \conv^{\leq p}(DW_n) \geq \frac{p-1}{(p-2)^2+1} \cdot (n-2).
    \]
\end{proposition}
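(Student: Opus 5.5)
The plan is a weighting (fractional packing) argument in the spirit of Proposition~\ref{prop:tree4-tight}. Write $DW_n$ as a cycle $C$ of order $n-2$ together with two non-adjacent hubs $a,b$, each adjacent to every vertex of $C$. So $DW_n$ has $n-2$ cycle edges and $2(n-2)$ spoke edges. Let $\vec{G}$ be an orientation and $\mathcal{X}$ a $(\leq p)$-family with $\Inv(\vec{G};\mathcal{X})=\cev{G}$. Every edge must lie inside an odd number, hence at least one, of the members of $\mathcal{X}$. I will put a weight $s$ on each spoke and a weight $c$ on each cycle edge. If every $(\leq p)$-set $X$ induces edges of total weight at most $1$, then $|\mathcal{X}|$ is at least the total weight, namely $(n-2)(2s+c)$.

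The key step is to bound the edges induced by a set $X$ with $|X|\leq p$, split by how many hubs it contains. Since $n-2\geq p$, the set $X\cap C$ is a proper subset of $V(C)$. Hence $C\langle X\cap C\rangle$ is a linear forest, with at most $|X\cap C|-1$ edges when $X\cap C$ is nonempty.
\begin{itemize}
\item If $X$ contains both hubs, then $|X\cap C|\leq p-2$. So $X$ induces at most $2(p-2)$ spokes and at most $p-3$ cycle edges, and there is no hub--hub edge.
\item If $X$ contains exactly one hub, it induces at most $p-1$ spokes and at most $p-2$ cycle edges.
\item If $X$ contains no hub, it induces at most $p-1$ cycle edges.
\end{itemize}
Smaller intersections give fewer edges of each type, so it suffices to impose
\[
2(p-2)s+(p-3)c\leq 1,\qquad (p-1)s+(p-2)c\leq 1,\qquad (p-1)c\leq 1 .
\]

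Next I choose $s,c$ to make the first two constraints tight. The determinant of that $2\times 2$ system is $2(p-2)^2-(p-1)(p-3)=(p-2)^2+1=:D$. Solving gives $s=1/D$ and $c=(p-3)/D$, both nonnegative for $p\geq 3$. The third constraint becomes $(p-1)(p-3)\leq D$, i.e.\ $p^2-4p+3\leq p^2-4p+5$, which holds. Then $2s+c=(p-1)/D$. Summing the weights over $\mathcal{X}$ in two ways yields $|\mathcal{X}|\geq \frac{p-1}{(p-2)^2+1}(n-2)$.

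The main obstacle is making sure the case analysis on $X$ is exhaustive and uses the hypothesis $n\geq p+2$ correctly. This hypothesis is exactly what prevents $X\cap C$ from containing the whole cycle. It also matters in the small case $p=3$, where one must check that two hubs plus one cycle vertex give only $2$ spokes and no further edges, since the hubs are non-adjacent.
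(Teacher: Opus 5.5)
Your weighting is exactly the paper's (spokes get $1/((p-2)^2+1)$, cycle edges $(p-3)/((p-2)^2+1)$), and your constraints are checked correctly whenever $X$ contains a hub. The gap is the sentence ``since $n-2\geq p$, the set $X\cap C$ is a proper subset of $V(C)$''. This fails when $n=p+2$. Then the cycle has exactly $p$ vertices, so $X=V(C)$ is a legitimate $(\leq p)$-set inducing all $p$ cycle edges. Its total weight is $p(p-3)/((p-2)^2+1)$, which is at most $1$ only when $p\leq 5$. So for $p\geq 6$ and $n=p+2$, your key claim that every $(\leq p)$-set has weight at most $1$ is false; for $p=6$, $n=8$, the set $V(C)$ has weight $18/17$. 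The hypothesis $n\geq p+2$ therefore does not do what your final paragraph says: as written, the argument needs $n\geq p+3$ (or $p\leq 5$). The paper's ``straightforward to check'' glosses over the same case.

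The repair is short. For $n=p+2$ and $p\geq 6$, the target bound is $\frac{p(p-1)}{(p-2)^2+1}\leq 2$, since this inequality is equivalent to $(p-2)(p-5)\geq 0$. On the other hand, a single $(\leq p)$-set omits some vertex of $DW_{p+2}$, and that vertex has positive degree, so one inversion cannot reverse every edge. Hence $\conv^{\leq p}(DW_{p+2})\geq 2$. With this boundary case handled separately, the rest of your proof goes through.
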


\begin{proof}
    Let $n$ be an integer with $n \geq p+2$.
    We denote by $u_1, u_2$ the two vertices of degree $n-2$ in $DW_n$.
    Let $w \colon E(DW_n) \to \mathbb{R}$ be defined as follows
    for every $e \in E(DW_n)$,
    \[
        w(e) = 
        \begin{cases}
            \frac{1}{(p-2)^2+1} & \textrm{if $e$ is incident to $u_1$ or $u_2$,} \\
            \frac{p-3}{(p-2)^2+1} & \textrm{if $e$ is not incident to $u_1$ and $u_2$.} \\
        \end{cases}
    \]
    Then, it is straightforward to check that $\sum_{e \in E(DW_n \langle X \rangle)} w(e) \leq 1$ for every $(\leq p)$-set $X \subseteq V(DW_n)$.
    On the other hand, $\sum_{e \in E(DW_n)} w(e) = \frac{p-1}{(p-2)^2+1} \cdot (n-2)$.
    We deduce that $\conv^{(\leq p)}(DW_n) \geq \frac{p-1}{(p-2)^2+1} \cdot (n-2)$.
\end{proof}

\section{Open problems}

In Section~\ref{sec:upper-gen}, we proved that $\id^{\leq p}(G) \leq \left\lceil\frac{|E(G)|}{\lfloor p/2\rfloor}\right \rceil + \frac{1}{2} p^2$. 
Since for a matching graph, $\conv^{\leq p}(G) = \left\lceil\frac{|E(G)|}{\lfloor p/2\rfloor}\right \rceil \leq \id^{\leq p}(G)$, a natural question 
is to determine the smallest number $\Psi_p$, (resp. $\Psi'_p$) such that 
$\id^{\leq p}(G)\leq \left\lceil \frac{|E(G)|}{\lfloor p/2\rfloor}\right \rceil + \Psi_p$
(resp. $\conv^{\leq p}(G) \leq  \left\lceil \frac{|E(G)|}{\lfloor p/2\rfloor}\right \rceil + \Psi'_p$) 
for every graph $G$.
As proved in~\cite{HHR24}, if $n\leq p$, then $\id^{\leq p}(K_n) = \id(K_n) = n-1$. So $\Psi_p \geq n-1 - \left\lceil \frac{\binom{n}{2}}{\lfloor p/2\rfloor}\right \rceil \geq n-1 - \left\lceil \frac{n(n-1)}{p-1} \right\rceil$.
For $n=\lceil p/2 \rceil$, we obtain $\Psi_p \geq \frac{1}{4}p -\frac{3}{2}$.
We believe that this lower bound is tighter than the upper bound.

\begin{problem}
 Does there exist a constant  $C$ such that $\Psi_p \leq C\cdot p$ for every integer $p$ greater than $1$?
\end{problem}

For planar graphs, it would be interesting to close the gaps between the upper and lower bounds proved on Section~\ref{sec:planar}. In particular, for outerplanar graphs we believe that better bounds can be obtained. Recall that every outerplanar graph $G$ is $2$-degenerate, so by Corollary~\ref{cor:procedure-greedy}, $\id^{\leq p}(G) \leq |V(G)| -1$ for every $p \geq k+1$. 

\begin{problem}
 Does there exist a constant $\alpha <  1$ such that $\id^{\leq p}(G) \leq \alpha n$ for every outerplanar graph $G$ of order $n$?
\end{problem}

\bibliographystyle{alpha}
\bibliography{bibliography}

\end{document}